\newtheorem{thm}{Theorem}
\newtheorem{lem}[thm]{Lemma}
\newtheorem{cor}[thm]{Corollary}
\newtheorem{prop}[thm]{Proposition}
\newtheorem*{equiv-lem}{Equivalence Lemma}
\theoremstyle{definition}
\newtheorem{example}[thm]{Example}
\newtheorem{defn}[thm]{Definition}
\def\mad{\textrm{mad}}
\newcommand\vph{\varphi}
\newcommand\chisl{\chi^{\star}_{\ell}}
\newcommand\chil{\chi_{\ell}}
\newcommand\chisc{\chi^{\star}_{c}}
\newcommand\chic{\chi_{c}}
\newcommand\tE{\tilde{E}}
\renewcommand\geq{\geqslant}
\renewcommand\ge{\geqslant}
\renewcommand\le{\leqslant}
\renewcommand\P{\mathcal{P}}
\newcommand\B{\mathcal{B}}
\newcommand\G{\mathcal{G}}
\newcommand\swap{\textrm{swap}}
\newcommand\id{\textrm{id}}
\tikzstyle{uStyle}=[shape = circle, minimum size = 4pt, inner sep = 1pt,
\tikzstyle{ugStyle}=[shape = circle, minimum size = 4pt, inner sep = 1pt,
\tikzstyle{ubStyle}=[shape = circle, minimum size = 4pt, inner sep = 1pt,
\tikzstyle{uBstyle}=[shape = circle, minimum size = 9pt, inner sep = 1pt,
\tikzstyle{lStyle}=[shape = circle, minimum size = 5pt, inner sep =
\title{List Packing and Correspondence Packing of Planar Graphs}
\author{Daniel W. Cranston\thanks{%
Department of Computer Science, Virginia Commonwealth
University, Richmond, VA, USA;
\texttt{dcranston@vcu.edu}
}
\and
Evelyne Smith-Roberge\thanks{%
School of Mathematics, Georgia Tech,
Atlanta, GA, USA;
\texttt{esmithroberge3@gatech.edu}
}
}
\begin{document}
\maketitle
\abstract{
    For a graph $G$ and a list assignment $L$ with $|L(v)|=k$ for all $v$, an $L$-packing
    consists of $L$-colorings $\vph_1,\cdots,\vph_k$ such that $\vph_i(v)\ne\vph_j(v)$ for
    all $v$ and all distinct $i,j\in\{1,\ldots,k\}$.  Let $\chisl(G)$ denote the smallest
    $k$ such that $G$ has an $L$-packing for every $L$ with $|L(v)|=k$ for all $v$.
    Let $\P_k$ denote the set of all planar graphs with girth at least $k$.
    We show that (i) $\chisl(G)\le 8$ for all $G\in \P_3$ and (ii) $\chisl(G)\le 5$ for all
    $G\in \P_4$ and (iii) $\chisl(G)\le 4$ for all $G\in \P_5$.  Part (i) makes progress on 
    a problem of Cambie, Cames van Batenburg, Davies, and Kang.  
    We also consider the analogue of $\chisl$ for 
    correspondence coloring, $\chisc$.  In fact, all 
    bounds stated above for $\chisl$ also hold for $\chisc$.
    \smallskip

    \noindent
    \textbf{Keywords:} list-coloring, list-packing, correspondence coloring, planar graphs 
}

\section{Definitions and Introduction}
\label{defn-sec}
Graph coloring is one of the most widely studied areas within discrete mathematics,  and one of its most popular variants is \emph{list-coloring}.  Each vertex $v$ in a graph $G$ is given a list $L(v)$ of allowable colors, and we seek an \emph{$L$-coloring}, which
is a proper coloring $\vph$ such that $\vph(v)\in L(v)$ for all $v\in V(G)$.  If $|L(v)|=k$ for all $v$, then $L$ is a \emph{$k$-assignment}.  And the \emph{list-chromatic number}, $\chil(G)$, is the minimum $k$ such that $G$ admits an $L$-coloring
for every $k$-assignment $L$. But what if we seek multiple $L$-colorings, all pairwise disjoint?
Given a graph $G$, a positive integer $k$, and a $k$-assignment $L$, an \emph{$L$-packing} 
is a set $\vph_1,\ldots,\vph_k$ of $L$-colorings such that $\vph_i(v)\ne \vph_j(v)$ for all $v\in V(G)$ 
and all distinct $i,j\in[k]$; here $[k]:=\{1,\ldots,k\}$.  Intuitively, $\vph$ partitions $L$ into $k$ $L$-colorings or, 
equivalently, packs $k$ $L$-colorings into $L$.

In this paper we study the \emph{list packing number} of $G$, denoted $\chisl(G)$, which is the minimum $k$ such that $G$ has a list packing for every $k$-assignment $L$.  It is not immediately 
clear that $\chisl$ is well-defined, but this is proved in~\cite{CCvBDK}, where the parameter was first studied; ~\cite{mudrock-note} also gives a short proof. 
In particular, always $\chisl(G)\le |G|$.  Furthermore, if $\chisl(G)=k$, then $G$ has an $L$-packing for every 
$s$-assignment $L$ with $s\ge k$.\footnote{We use induction on $s$, with the base case $s=k$.  So suppose that $s\ge k+1$.  Since 
$\chil(G)\le \chisl(G)<s$, clearly $G$ has an $L$-coloring $\vph_s$.  Now let $L'(v):=L(v) \setminus \vph_s(v)$ for all $v$.
By the induction hypothesis, $G$ has an $L'$-packing $\vph_1,\ldots,\vph_{s-1}$; this $L'$-packing combines with $\vph_s$ to 
give the desired $L$-packing.}

We also study a stronger notion, based on correspondence coloring.  A \emph{$k$-correspondence assignment} (or 
\emph{$k$-cover} for short) for a 
graph $G$ is an orientation $\vec{D}$ of $G$ and a map $\sigma:\vec{D}\to S_k$ that assigns each arc a permutation of $[k]$.  A 
$(\vec{D},\sigma)$-coloring is a map $\vph:V(G)\to [k]$ such that $\sigma(vw)(\vph(v))\ne \vph(w)$ for all $vw\in \vec{D}$.
The correspondence chromatic number, $\chic(G)$, is the minimum $k$ such that $G$ admits a $(\vec{D},\sigma)$-coloring for every 
choice\footnote{In fact, the choice of $\vec{D}$ is immaterial and just for easy reference, since the problem remains equivalent
when we reverse an edge $vw$ and replace $vw$ and $\sigma(vw)$ with $wv$ and $\sigma^{-1}(vw)$.}
of $\vec{D}$ and $\sigma$.

Similar to above, we seek many \emph{disjoint} $(\vec{D},\sigma)$-colorings.
For a graph $G$, integer $k$, and $k$-cover $(\vec{D},\sigma)$, a \emph{$(\vec{D},\sigma)$-packing} consists of functions $\vph_1,
\ldots,\vph_k$ such that each $\vph_i$ is a $(\vec{D},\sigma)$-coloring and $\vph_i(v)\ne \vph_j(v)$ for all distinct $i,j\in[k]$.
The \emph{correspondence packing number}, denoted $\chisc(G)$, is the minimum $k$ such that $G$ admits a $(\vec{D},\sigma)$-packing
for every $k$-correspondence assignment $(\vec{D},\sigma)$.  It is straightforward to verify\footnote{Given a $k$-assignment $L$, for each vertex $v$, we map $L(v)$ arbitrarily onto $[k]$ (for that $v$) and then, for each edge $vw$ in $E(G)$ we add a matching between the $[k]$ for $v$ and the $[k]$ for $w$ corresponding to the colors that appear in both $L(v)$ and $L(w)$; finally, we extend this matching arbitrarily to a 1-factor between these sets $[k]$.  Every correspondence coloring for the resulting $k$-cover will also be an $L$-coloring.} that $\chil(G)\le \chic(G)$ and $\chisl(G)\le \chisc(G)$ for all $G$.  In this paper, we focus mainly on $\chisc$.  

The notions $\chisc$ and $\chisl$ were first studied in~\cite{CCvBDK}, where the authors posed numerous open questions.
In particular, they highlighted the following: Determine $\max_{G\in \P}\chisl(G)$, where $\P$ is the class of all planar graphs.  
They proved the easy bound $\chisl(G)\le \chisc(G)\le 10$.  Our strongest result improves this significantly.  

\begin{thm}
    \label{P3-thm}
    $\chisc(G)\le 8$ for all planar graphs $G$.
\end{thm}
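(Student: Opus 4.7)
The plan is a minimum-counterexample argument with reducibility via Hall's theorem, followed by a discharging phase. Let $G$ be a vertex-minimum planar graph for which some $8$-cover $(\vec{D},\sigma)$ admits no packing. By minimality, for every $v\in V(G)$ the graph $G-v$ (with the restricted cover) admits an $8$-packing $\vph_1,\ldots,\vph_8$; the question is whether such a packing extends to $v$.

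The central observation is that extending a packing of $G-v$ to $v$ (where $d(v)=d$) amounts to choosing a permutation $\pi\in S_8$ such that for every neighbor $w$ and every $i$, $\pi(i)\ne\sigma(vw)^{-1}(\vph_i(w))$. Because $\vph_1(w),\ldots,\vph_8(w)$ is a permutation of $[8]$, the forbiddings contributed by $w$ form a permutation $\pi_w$ of $[8]$. Let $B$ be the bipartite graph on $[8]\times[8]$ with $(i,c)\in E(B)$ whenever $c$ is allowed for row $i$; we seek a perfect matching in $B$. A short Hall analysis shows that if $S\subseteq[8]$ witnesses $|N_B(S)|<|S|$ and $T:=[8]\setminus N_B(S)$, then every pair in $S\times T$ must be hit by some $\pi_w$; since each $\pi_w$ contributes at most $\min(|S|,|T|)$ such hits, one gets $|S|,|T|\le d$ and $|S|+|T|>8$. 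When $d\le 4$ this is impossible, so Hall's condition always holds; hence $\delta(G)\ge 5$.

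For $d\in\{5,6,7\}$ the extension can genuinely fail, but only under highly rigid configurations. I would enumerate these: for $d=5$ the only bad $(|S|,|T|)$ are $(4,5)$, $(5,4)$, $(5,5)$, each forcing equality in the Hall count and thus a very constrained alignment among the permutations $\pi_{w_1},\ldots,\pi_{w_5}$ at $v$. In each case, the plan is to show one of two things: either the alignment is incompatible with some global cover structure (it propagates a forbidden local pattern across an edge), or we can \emph{re-select} the packing of $G-v$ by changing the color of a carefully chosen second neighbor $w'$ (via a secondary Hall argument applied at $w'$), thereby perturbing the $\pi_{w_i}$ enough to kill the bad alignment. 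This re-selection is the correspondence analogue of the Thomassen-style recoloring used in list coloring, but it is complicated by the fact that recoloring a single vertex simultaneously shifts the forbidden permutations at all of its neighbors.

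With reducible configurations ruling out low-degree vertices and low-degree configurations (two adjacent $5$-vertices, $5$-vertices on short triangles, etc.), the discharging phase assigns initial charge $\mu(x):=d(x)-4$ to every vertex and face; Euler's formula gives $\sum_x\mu(x)=-8$. Standard rules (triangular faces pull charge from their incident vertices, high-degree vertices donate to low-degree neighbors) redistribute the charge so that every element has final charge $\ge 0$, contradicting the negative sum. The main obstacle, and the heart of the argument, is the re-selection step for $d\in\{5,6\}$: controlling the non-local effects of recoloring a vertex $w'$, which alters the forbidden permutations at \emph{every} neighbor of $w'$, without precipitating a combinatorial explosion of cases. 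I expect to need a quantitative ``flexibility lemma'' that bounds below the number of permutations $\pi$ at $v$ escaping all rigid Hall obstructions simultaneously, so that some re-selection always succeeds.
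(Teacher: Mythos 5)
Your proposal correctly carries out only the easy first step. The Hall-type counting argument (each neighbor's forbiddings form a permutation, hence a Hall violator $(S,T)$ forces $|S|,|T|\le d(v)$ and $|S|+|T|\ge 9$) is sound and gives $\delta(G)\ge 5$, exactly as in the paper (Proposition~\ref{easy-prop} and Corollary~\ref{degen-cor}). But everything after that is a plan rather than a proof, and it is precisely the part where all the difficulty lives. You say you ``would enumerate'' the rigid obstructions for $d\in\{5,6,7\}$, that the plan is to ``re-select the packing of $G-v$ by changing the color of a carefully chosen second neighbor,'' and that you ``expect to need'' a quantitative flexibility lemma controlling the non-local effect of repacking a neighbor. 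None of these steps is carried out, and they are not routine: in the paper they occupy two sections, comprising a classification of obstructions in $(8,3)$-bigraphs into four types (Definition~\ref{type-defn}, Proposition~\ref{type-prop}), lemmas guaranteeing that a repacking of a neighbor can be chosen to re-insert specific edges into $H_v$ while destroying no Hall condition (Lemmas~\ref{switcher-general-lem}, \ref{switcher-simple-lem}, \ref{switcher-double-lem}), and, for the hardest configuration (a triangle with degrees $6,6,5$), lemmas producing a 1-factor in an $(8,4)$-bigraph that simultaneously avoids a prescribed $2P_3$ from a cycle and a $2K_2$ from a matching (Lemmas~\ref{key1factor-lem}, \ref{key1factorB-lem}), followed by a delicate case analysis in which \emph{two} neighbors are unpacked and repacked. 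Your sketch gives no mechanism for handling the situation where repairing the obstruction at $v$ creates a new obstruction at the repacked neighbor, which is the crux of that case.

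The unavoidability side is also unsubstantiated: you propose discharging with charge $d(x)-4$ and ``standard rules,'' but you never fix the list of configurations you can actually prove reducible, so the discharging cannot be checked; note that min-degree-5 planar graphs need not contain two adjacent 5-vertices, so your candidate configurations are not obviously unavoidable. The paper sidesteps discharging entirely by invoking Borodin's theorem (Lemma~\ref{borodin-lem}): every planar graph with $\delta\ge 5$ contains a triangle of weight at most $17$, and then only two triangle configurations ($5$-$5$-$x$ and $6$-$6$-$5$) need to be reduced; the triangle structure (both repacked vertices adjacent to the target vertex, and to each other) is used essentially in the reduction. As it stands, your submission establishes $\delta(G)\ge 5$ and outlines intentions for the remainder; the theorem is not proved.
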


We also consider planar graphs of higher girth.

\begin{thm}
    \label{P4-thm}
    $\chisc(G)\le 5$ for all planar graphs $G$ with girth at least 4.  
\end{thm}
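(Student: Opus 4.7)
Assume for contradiction that some planar graph $G$ with girth at least $4$ has $\chisc(G)>5$, and take $G$ minimum with respect to $|E(G)|$. Fix a $5$-cover $(\vec D,\sigma)$ admitting no $(\vec D,\sigma)$-packing; by minimality of $G$, the induced $5$-cover on every proper subgraph does admit a packing. Since $G$ has girth at least $4$, Euler's formula gives $|E(G)|\le 2(|V(G)|-2)$, so $G$ has a vertex $v$ with $d(v)\le 3$. The entire strategy is to exhibit $v$ as a \emph{reducible configuration}: produce a $5$-packing of $G-v$ that extends to a $5$-packing of $G$, contradicting our choice of $G$. I would not expect to need a separate discharging argument — for girth $4$, the Euler bound already puts a degree-$\le 3$ vertex on our desk.

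\textbf{Reducing the extension to an SDR.} Let $\vph_1,\ldots,\vph_5$ be a $5$-packing of $G-v$. Extending it to $v$ means choosing $\vph_i(v)\in A_i:=[5]\setminus\{\sigma_{uv}(\vph_i(u)):u\in N(v)\}$ with $\vph_1(v),\ldots,\vph_5(v)$ pairwise distinct, i.e., finding a system of distinct representatives for $A_1,\ldots,A_5$. Since each neighbor $u$ of $v$ uses each color of $[5]$ exactly once among $\vph_1(u),\ldots,\vph_5(u)$, every color lies in at most $d(v)\le 3$ of the forbidden sets $[5]\setminus A_i$; in other words, $|F_{i_1}\cap\cdots\cap F_{i_k}|\le\max(0,3-(k-1)\cdot 0)$ with tight counting constraints. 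A direct Hall check shows the SDR always exists when $d(v)\le 2$, and a short enumeration of subset sizes $|S|\in\{1,\ldots,5\}$ shows that the only way Hall's condition can fail when $d(v)=3$ is for three colorings $\{i_1,i_2,i_3\}$ to share exactly the same $3$-element forbidden set $T$ — equivalently, the three neighbors account for all nine (neighbor, coloring) incidences into $T$ via a Latin-square-like pattern.

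\textbf{Repairing the $d(v)=3$ obstruction: the main obstacle.} The hard case is therefore: $d(v)=3$, and the fixed packing $\vph_1,\ldots,\vph_5$ produces the "Latin-square" obstruction just described. To fix it, I would exploit that $G$ is triangle-free (since girth is at least $4$): the neighbors $u_1,u_2,u_3$ of $v$ are pairwise non-adjacent, so I can attempt to swap two of the problematic colorings $\vph_{i_a}, \vph_{i_b}$ at a single $u_j$ (i.e., exchange $\vph_{i_a}(u_j)$ and $\vph_{i_b}(u_j)$) without creating conflicts among the $u_j$'s themselves. The swap may still fail at some other neighbor $w$ of $u_j$; in that case, extend the swap to $w$, propagating along a Kempe-chain-like structure in the $2$-coloring-swap subgraph. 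The main obstacle — and where most of the work will live — is verifying that for \emph{some} choice of $u_j$ and some pair of colorings, the chain terminates at a legal modified packing of $G-v$ for which the $A_i$'s now admit an SDR. I would do this by analyzing the rigid structure of the Latin-square obstruction (which couples the behavior of $\sigma$ on the three edges $u_jv$) and ruling out the case that every attempted swap propagates forever, possibly by a parity/counting argument or by appealing to the finiteness of $G$ and minimality of the counterexample to extract a secondary reducible configuration.
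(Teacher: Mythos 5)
There is a genuine gap, and it sits exactly where you flag it: the ``repair'' of the $d(v)=3$ obstruction. Your reduction to an SDR and your analysis of how Hall's condition can fail (three colorings whose allowed sets shrink to the same pair of colors, forced by a Latin-square-like pattern of the nine neighbor--coloring incidences) is fine; this is essentially Lemma~\ref{matching-lem}(2) for a $(5,2)$-bigraph. But the proposed fix --- swap two of the offending colorings at one neighbor $u_j$ and propagate a Kempe-chain-like exchange --- does not go through in the correspondence-packing setting. If you exchange $\vph_a(u_j)$ and $\vph_b(u_j)$, the conflict this creates on an edge $u_jw$ is of the form $\sigma(u_jw)(\vph_b(u_j))=\vph_a(w)$, and the color it collides with at $w$ need not belong to $\vph_b$ at all; since $\sigma$ is an arbitrary permutation, the ``chain'' is not a two-coloring alternating structure, it can branch into conflicts involving other colorings at several neighbors of $u_j$ simultaneously, and there is no invariant that forces it to terminate in a packing whose new sets $A_i$ admit an SDR. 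The appeal to minimality of $G$ does not help here either: the modifications all take place inside $G-v$, whose packing is not constrained by minimality in any way that bounds the cascade.

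There is also a structural reason to doubt the plan of ``no discharging needed, a degree-$3$ vertex suffices.'' Your argument uses planarity only to produce a vertex of degree at most $3$ and girth only to make $N(v)$ independent; if it worked, it would show $\chisc\le 5$ for all triangle-free $3$-degenerate graphs, which is far stronger than the theorem (and in tension with the pattern $\chisc=2d$ witnessed by forests and cycles for $d=1,2$). The paper's proof reflects this: a lone $3$-vertex whose neighbors all have degree at least $5$ is \emph{not} shown to be reducible. Instead, one proves reducibility of a $3$-vertex with a neighbor of degree at most $4$ (Lemma~\ref{k,k+1-lem}, where the repacking of that low-degree neighbor is controlled via Lemma~\ref{1gives2-lem}) and of a $5$-vertex adjacent to four $3$-vertices (deleting all five vertices and choosing the extension at $v$ to dodge the unique bad matchings $\tilde{M_i}$), and then a short discharging argument shows a graph with $\mathrm{mad}<4$ must contain one of these configurations. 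So the missing idea in your proposal is precisely the mechanism that lets you repack a neighbor safely --- which requires that neighbor (or a cluster of them) to have low degree, and hence requires the discharging step you hoped to avoid.
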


\begin{thm}
    \label{P5-thm}
    $\chisc(G)\le 4$ for all planar graphs $G$ with girth at least 5, and the value 4 is optimal. 
\end{thm}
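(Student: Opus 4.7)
The plan is to establish the two parts—the upper bound $\chisc(G)\le 4$ and the optimality of $4$—separately.

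For \emph{optimality}, I would exhibit an explicit $3$-correspondence assignment on $C_5$ that admits no packing. Orient $C_5=v_1v_2v_3v_4v_5v_1$ cyclically, set $\sigma(v_iv_{i+1})=\id$ for $i\in\{1,2,3,4\}$, and $\sigma(v_5v_1)=(1\,2)$. Given any putative $3$-packing $\vph_1,\vph_2,\vph_3$, record $\pi_{v_i}\in S_3$ by $\pi_{v_i}(j):=\vph_j(v_i)$. The packing condition on each arc $v_iv_{i+1}$ is precisely that $\tau_i:=\pi_{v_{i+1}}^{-1}\sigma(v_iv_{i+1})\pi_{v_i}$ be a derangement of $[3]$---equivalently, a $3$-cycle, hence an even permutation. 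But telescoping yields
\[
\tau_5\tau_4\tau_3\tau_2\tau_1 \;=\; \pi_{v_1}^{-1}(1\,2)\pi_{v_1},
\]
which is a transposition (odd)---a contradiction. Since $C_5\in\P_5$, this gives $\chisc(C_5)\ge 4$, matching the upper bound.

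For the \emph{upper bound}, I would argue by induction on $|V(G)|$, with trivial base case. The inductive step relies on a structure lemma of the following flavor: every planar graph $G\in\P_5$ contains a \emph{reducible configuration} drawn from a short predetermined list. Such a lemma is proved by discharging; the deficit in Euler's formula for girth-$5$ planar graphs (initial charges $d(v)-10/3$ summing to a negative quantity) forces low-degree vertices or short subgraphs of low-degree vertices---for instance, a vertex of degree at most $2$, or a pair of adjacent $3$-vertices with suitable neighborhoods. For each configuration one deletes or identifies to obtain a smaller graph $G'\in\P_5$, applies the induction hypothesis, and extends the resulting packing to $G$ by choosing permutations $\pi_v\in S_4$ at the reinstated vertices in a specified order.

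The main obstacle is the \emph{extension step}. Assigning $\pi_v$ at a vertex with $d$ already-assigned neighbors requires $\pi_v$ to satisfy $d$ simultaneous constraints, each restricting $\pi_v$ to a $9$-element subset of $S_4$ (namely a translate of the derangement set $D_4$). For $d=1$ this leaves $9$ valid permutations, but for $d\ge 2$ two such subsets can have empty intersection, so $3$-degeneracy alone does not supply a greedy completion. The bulk of the work is therefore to select the reducible configurations carefully and to verify, via intersection arguments among translates of $D_4$ or Hall-type matchings on the four positions of $\pi_v$, that the extension does succeed for each configuration. The sharpness of the bound $k=4$ witnessed by $C_5$ signals that these extension arguments must be quite tight.
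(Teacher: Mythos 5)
Your optimality argument is correct and is essentially the paper's own: the paper (Example~\ref{ex:cycles}) uses the same $3$-cover of a cycle with all arcs the identity except one transposition, and the same parity obstruction (each arc forces the ``transition'' permutation to be a derangement of $[3]$, hence a $3$-cycle, hence even, while the telescoped product is conjugate to a transposition). Since $C_5$ is planar of girth $5$, that half is done.

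The upper bound, however, is not proved; what you have written is a plan that explicitly defers the two steps that constitute the actual proof. First, you never pin down the unavoidable configuration. The paper's discharging (with charge $d(v)$ and the rule that each $3$-vertex takes $1/6$ from each neighbor of degree at least $4$, under $\mathrm{mad}<10/3$) produces a \emph{path $xvy$ of three consecutive $3$-vertices}; a lone vertex of degree at most $2$ is handled by the degeneracy corollary, but ``a pair of adjacent $3$-vertices with suitable neighborhoods'' is not known to be reducible here, and nothing in your sketch identifies or justifies the correct configuration. Second, and more seriously, you do not verify reducibility of any configuration with a degree-$3$ vertex. This is exactly where the difficulty lies: after deleting $v$ and packing $G-v$, the auxiliary bigraph $H_v$ is only a $(4,1)$-bigraph, which can fail Hall's condition, and the paper's proof of Theorem~\ref{thm:upperboundgirth5} spends several pages repacking the neighbors $x$ and $y$ (using Lemma~\ref{canalwaysswapanedge} and Lemma~\ref{lem:girth5condition}, pinning down the packing at the three neighbors up to symmetry, and then running a two-case analysis over the possible repackings $M_x'$, $M_y'$, $M_y''$, etc.) before a $1$-factor of the updated $H_v$ can be extracted. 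Your proposal acknowledges this (``the bulk of the work is therefore to select the reducible configurations carefully and to verify \dots''), but acknowledging the obstacle is not the same as overcoming it; as written there is no argument that any girth-$5$-unavoidable configuration containing a $3$-vertex with two $3$-neighbors is reducible, so the inductive step is missing. (A minor additional slip: for a vertex with only $d=2$ colored neighbors the extension never fails, since a $(4,2)$-bigraph always has a $1$-factor by Proposition~\ref{easy-prop}; the genuine failure mode only appears at $d=3$.)
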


We present these proofs in order of increasing difficulty.  
Theorems~\ref{P4-thm}, \ref{P5-thm}, and~\ref{P3-thm} are restated and proved, respectively, at the ends of Sections~\ref{P4-sec}, \ref{P5-sec}, and~\ref{P3-sec}.  In Section~\ref{matching-sec} we provide some lemmas needed in Section~\ref{P3-sec}.  Finally, in Section~\ref{open-sec}, we conclude with some open problems.
\bigskip

As we neared the end of preparing this manuscript, Stijn Cambie, Wouter Cames van Batenburg, and Xuding Zhu~\cite{CCvBZ} posted to arXiv a manuscript titled ``Disjoint list-colorings for planar graphs''.
This paper contains independent proofs of Theorems~\ref{P3-thm}--\ref{P5-thm}.  Those authors proved Theorems~\ref{P3-thm} and~\ref{P5-thm} somewhat more generally than we did.  In particular, they proved that $\chisc(G)\le 8$ for all graphs $G$ with maximum average degree less than 6.  However, parts of their proofs rely on computer verification.  In contrast, our proofs are completely
hand-checkable.

\subsection{Key Tools and Examples}
\label{outerplanar-sec}
Before proving our main results, it is helpful to first introduce our key tools and to illustrate their
application with a few examples.

\begin{defn}
    \label{bigraph-defn}
    A \emph{bigraph} is a bipartite graph with parts $A$ and $B$ such that $|A|=|B|$.  An \emph{$(s,t)$-bigraph}
    is a bigraph $H$ with $|A|=s$ and $\delta(H)\ge t$.
    Fix a graph $G$, a $k$-cover $(\vec{D},\sigma)$ of $G$, a vertex $v\in V(G)$, and a $(\vec{D},\sigma)$-packing $\vph$ of $G-v$.
    Form an auxiliary bigraph $H_v$ with $|A|=|B|=k$, with vertices in $A$ denoting colors in $[k]$ and vertices in $B$
    denoting $\vph_1,\ldots,\vph_k$ in $\vph$, and with edge $i\vph_j\in E(H)$ if the possibility $\vph_j(v)=i$ is
    permitted by $\vph$.  
    An \emph{obstruction} in a bigraph is $X\subseteq A$ such that $|N(X)|<|X|$.    (We only define
    $H_v$ formally in the case of correspondence packing, but the definition is nearly identical for list packing.)
\end{defn}

Note that extensions of $\vph$ to $v$ are in bijection with 
1-factors in $H_v$.  Throughout this paper we rely heavily on
Hall's Theorem, which says that a bigraph has a 1-factor if and 
only if it has no obstruction. 
When applying Hall's Theorem to an $(s,t)$-bigraph, the following proposition is useful for restricting the size of any possible obstructions.

\begin{prop}
    If $G$ is an $(s,t)$-bigraph and there exists $X\subseteq A$ with $|X|>|N(X)|$, then $t+1\le |X|\le s-t$.
    In particular, every $(2t,t)$-bigraph has a 1-factor.
    \label{easy-prop}
\end{prop}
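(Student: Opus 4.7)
The plan is to prove the two inequalities separately by considering individual vertex degrees, and then deduce the ``in particular'' statement as an immediate corollary via Hall's Theorem.

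For the lower bound $|X|\ge t+1$, I would pick any vertex $x\in X$ (the hypothesis $|X|>|N(X)|\ge 0$ guarantees $X$ is nonempty). Since $\delta(H)\ge t$, we have $|N(X)|\ge |N(x)|\ge t$. Combining with $|X|>|N(X)|$ gives $|X|\ge t+1$.

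For the upper bound $|X|\le s-t$, I would look at the complement side: since $|N(X)|<|X|\le s=|B|$, the set $B\setminus N(X)$ is nonempty. Pick $y\in B\setminus N(X)$. By definition of $N(X)$, every neighbor of $y$ lies in $A\setminus X$, so $|A\setminus X|\ge |N(y)|\ge t$, which rearranges to $|X|\le s-t$.

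Finally, for the ``in particular'' clause, substituting $s=2t$ into the two bounds would force $t+1\le |X|\le t$, which is impossible. Hence no obstruction $X$ exists in a $(2t,t)$-bigraph, and Hall's Theorem yields a 1-factor. There is no real obstacle here; the only subtlety worth flagging is the non-emptiness of $B\setminus N(X)$, which follows from $|N(X)|<|X|\le s$, and of $X$ itself, which follows from the strict inequality $|X|>|N(X)|\ge 0$.
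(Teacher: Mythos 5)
Your proof is correct and takes essentially the same approach as the paper's: both bounds follow from the minimum-degree condition $\delta(H)\ge t$ applied on each side, with your upper-bound step (a vertex of $B\setminus N(X)$ has all of its at least $t$ neighbors in $A\setminus X$) being just the contrapositive of the paper's pigeonhole count. The ``in particular'' clause is handled identically, via Hall's Theorem.
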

\begin{proof}
    If $1\le|X|\le t$, then $|N(X)|\ge \delta(G)\ge t \ge |X|$.  Instead suppose $|X|\ge s-t+1$.  For each $w\in B$,
    by Pigeonhole $|N(w)\cap X|\ge d(w)+|X|-|A| \ge t + (s-t+1) - s = 1$.  Thus, $w\in N(X)$; 
    so $|N(X)| = |B| = s \ge |X|$.  For a $(2t,t)$-bigraph, $t+1\le |X|\le t$; so no $X$ exists.
\end{proof}

Proposition~\ref{easy-prop}, yields the following corollary, which was proved in \cite{CCvBDK} (see Theorem~9 therein).  
\begin{cor}\label{cor:degen}
    If $\chisc(G-v)\le k$ and $d(v)\le k/2$, then also $\chisc(G)\le k$.  In particular, if $G$ is $d$-degenerate, 
    then $\chisc(G)\le 2d$.
    \label{degen-cor}
\end{cor}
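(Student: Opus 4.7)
The plan is to build the auxiliary bigraph $H_v$ from Definition~\ref{bigraph-defn} and show it is a $(k,k-d(v))$-bigraph; then Proposition~\ref{easy-prop} immediately yields a 1-factor, which is exactly an extension of a $(\vec{D},\sigma)$-packing of $G-v$ to $v$.

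Concretely, I would start from an arbitrary $k$-cover $(\vec{D},\sigma)$ of $G$ and invoke $\chisc(G-v)\le k$ to obtain a $(\vec{D},\sigma)$-packing $\vph_1,\ldots,\vph_k$ of $G-v$. Form $H_v$ with $A=[k]$ and $B=\{\vph_1,\ldots,\vph_k\}$, putting an edge $i\vph_j$ exactly when setting $\vph_j(v)=i$ respects the correspondence constraint at every arc incident to $v$. The key degree computation runs on both sides: for each fixed $\vph_j$, every neighbor $u$ of $v$ forbids at most one color at $v$ (namely, the color matched to $\vph_j(u)$ under $\sigma$), so $\deg_{H_v}(\vph_j)\ge k-d(v)$; conversely, for each fixed color $i\in[k]$ and each neighbor $u$, precisely one index $j$ satisfies $\vph_j(u)=\sigma(\cdot)(i)$ (or the preimage, depending on orientation), because $\vph_1(u),\ldots,\vph_k(u)$ exhaust $[k]$. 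Hence at most $d(v)$ colorings forbid $i$, giving $\deg_{H_v}(i)\ge k-d(v)$ as well.

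Since $d(v)\le k/2$, we have $k-d(v)\ge\lceil k/2\rceil$, so $H_v$ is a $(k,\lceil k/2\rceil)$-bigraph. Proposition~\ref{easy-prop} then says any obstruction $X\subseteq A$ must satisfy $\lceil k/2\rceil+1\le|X|\le k-\lceil k/2\rceil\le\lfloor k/2\rfloor$, which is impossible. By Hall's Theorem $H_v$ has a 1-factor, and using that 1-factor to define $\vph_1(v),\ldots,\vph_k(v)$ produces a $(\vec{D},\sigma)$-packing of $G$, proving the first sentence.

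For the ``in particular'' clause, I would induct on $|V(G)|$ with $k=2d$. A $d$-degenerate graph $G$ has some vertex $v$ with $d(v)\le d=k/2$, and $G-v$ is also $d$-degenerate, so by induction $\chisc(G-v)\le 2d$; applying the first part with $k=2d$ gives $\chisc(G)\le 2d$. The main obstacle is the symmetric degree bound on the $A$-side of $H_v$: it is tempting to count only the ``coloring forbids color'' direction from the $B$-side, but Proposition~\ref{easy-prop} genuinely needs $\delta(H_v)\ge t$, so one must exploit the fact that the already-built packing is a permutation at each neighbor of $v$ to get the matching lower bound for colors $i\in A$.
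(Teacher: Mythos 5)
Your proposal is correct and follows the same route as the paper: form the auxiliary bigraph $H_v$, observe it is a $(k,k-d(v))$-bigraph, and apply Proposition~\ref{easy-prop} with Hall's Theorem, then induct for the $d$-degenerate statement. The only difference is that you spell out the minimum-degree verification on both sides of $H_v$ (using that the packing at each neighbor exhausts $[k]$), which the paper leaves implicit.
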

\begin{proof}
    For the first statement, note that $H_v$ is a $(k,k-k/2)$
    -bigraph.  Thus, by Proposition~\ref{easy-prop}
    the graph $H_v$ has no obstruction, so has a 1-factor by Hall's Theorem.  The second statement follows from the
    first by induction on $|G|$, with $k:=2d$.
\end{proof}

The following example first appeared in~\cite[Theorems~1~and~2]{CCvBDK2}.  But we present it here as a warm-up to familiarize 
the reader with the use of Definition~\ref{bigraph-defn} and Proposition~\ref{easy-prop}.

\begin{example}\label{ex:cycles}
    $\chisl(G)=\chisc(G)=2$ if $G$ is a forest with at least one edge, but $\chisl(C_k)=3<4=\chisc(C_k)$ for all $k\ge 3$.
    (In fact $\chisc(C_k)\le 4$ by Corollary~\ref{degen-cor}, but here we give a few more details.)
\end{example}
\begin{proof}
    If $G$ has an edge, then clearly $\chisc(G)\ge \chisl(G)\ge \chi(G)\ge 2$.  To prove $\chisc(G)\le 2$, we use induction
    on $|G|$.  Let $v$ be a vertex of $G$ with at most one neighbor.  By hypothesis, $\chisc(G-v)\le 2$.  Now we can extend
    the packing to $v$ because $H_v=K_{2,2}-M$ for some matching $M$; thus $H_v$ contains a 1-factor.

    We first show that $\chisc(C_k)\ge4$.  We denote the vertices of $C_k$, listed in cyclic order, by $v_1,\cdots, v_k$.  Consider the following 3-correspondence assignment.  Orient the graph as $v_iv_{i+1}$ for all $i\in [k-1]$ and $v_kv_1$.  Take the permutations
    $\sigma(v_iv_{i+1})=\id$, the identity, for all $i\in[k-1]$, and let $\sigma(v_kv_1)$ have a single transposition.  By symmetry,
    we assume that $\vph_i(v_1)=i$ for all $i\in [3]$.  It is easy to check, by induction on $j$, that $(\vph_1(v_j),\vph_2(v_j),
    \vph_3(v_j))$ must be an \emph{even} permutation for all $j\in [k]$.  However, now the edge $v_kv_1$ creates a conflict.
    Thus, $\chisc(C_k)>3$.  

    Next we prove that $\chisc(C_k)\le 4$.  We build the packing inductively on $i$; at each step but the last, this is easy 
    since $H_{v_i}$ is a $(4,3)$-bigraph.
    Finally, $H_{v_k}$ is a $(4,2)$-bigraph; by Proposition~\ref{easy-prop}, with $s=4$ and $t=2$, the graph $H_{v_k}$ has 
    no obstruction, so we can extend the packing to $v_k$.

    Now we show that $\chisl(C_k)\ge 3$.  If $k$ is odd, then $\chisl(C_k)\ge \chi(C_k)\ge 3$.  If $k$ is even, then let 
    $L(v_i)=\{1,2\}$ for all $i\in[k-2]$ and $L(v_{k-1})=\{1,3\}$ and $L(v_k)=\{2,3\}$.  It is easy to check that no
    $L$-coloring $\vph_i$ has $\vph_i(v_1)=2$.  Thus, $C_k$ has no $L$-packing.

   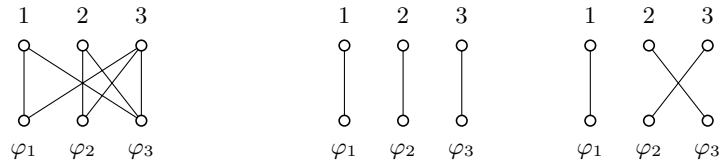
\begin{figure}[!h]
    \centering
\begin{tikzpicture}[xscale = .78, yscale=1]
    \def\off{.4cm}
\tikzset{every node/.style=uStyle}

\begin{scope}[xshift=.35in]
    \foreach \i in {1, 2, 3}
    {
        \draw[thick] (\i,1) node (a\i) {} (\i,0) node (b\i) {};
        \draw (a\i) ++ (0,\off) node[lStyle] {\footnotesize{${\i}$}};
        \draw (b\i) ++ (0,-\off) node[lStyle] {\footnotesize{$\vph_{\i}$}};
    }
    \draw (a3) -- (b3) -- (a2) -- (b2) -- (a3) -- (b1) -- (a1) -- (b3);
\end{scope}

\begin{scope}[xshift = 2.5in]
    \foreach \i in {1, 2, 3}
    {
        \draw[thick] (\i,1) node (a\i) {} (\i,0) node (b\i) {};
        \draw (a\i) ++ (0,\off) node[lStyle] {\footnotesize{${\i}$}};
        \draw (b\i) ++ (0,-\off) node[lStyle] {\footnotesize{$\vph_{\i}$}};
    }
    \draw (a1) -- (b1) (a2) -- (b2) (a3) -- (b3);
\end{scope}

\begin{scope}[xshift=4.15in]
    \foreach \i in {1, 2, 3}
    {
        \draw[thick] (\i,1) node (a\i) {} (\i,0) node (b\i) {};
        \draw (a\i) ++ (0,\off) node[lStyle] {\footnotesize{${\i}$}};
        \draw (b\i) ++ (0,-\off) node[lStyle] {\footnotesize{$\vph_{\i}$}};
    }
    \draw (a1) -- (b1) (a2) -- (b3) (a3) -- (b2);
\end{scope}

\end{tikzpicture}
       \caption{Left: $H_{v_k}$.  Center and right: 1-factors $M'_k$ and $M''_k$ in $H_{v_k}$.\label{cycle-fig}}
\end{figure}

    Finally, we show that $\chisl(C_k)\le 3$.  We fix a 3-assignment $L$ and construct an $L$-packing $\vph_1, \vph_2, \vph_3$ (simply $\vph$ for short).  
    If $L$ is everywhere identical, then we are done since $\chi(C_k)\le 3$.  So assume that $L(v_k)\ne L(v_1)$.  
    We can easily color $v_1,\ldots, v_{k-2}$ in order, since at each step $H_{v_i}$ is a $(3,2)$-bigraph.  
    Now $H_{v_k}$ (before coloring $v_{k-1}$) is a $(3,2)$-bigraph with $|E(H_k)|\ge 7$, since $L(v_1)\ne L(v_k)$.  By symmetry, assume that $L(v_k)=\{1,2,3\}$ and $H_{v_k}\supseteq K_{3,3}-\{1\vph_2,2\vph_1\}$; see Figure~\ref{cycle-fig}.
    Let $M'_k:=\{1\vph_1,2\vph_2,3\vph_3\}$ and $M''_k:=\{1\vph_1,2\vph_3,3\vph_2\}$.  Note that any matching $M_{k-1}$ in $H_{v_{k-1}}$ that forbids both $M'_k$ and $M''_k$ must include edge $1\vph_1$.  However, $H_{v_{k-1}}$ is a $(3,2)$-bigraph.  So we can choose an arbitrary edge $i\vph_1$, with $i\ne 1$, and find a 1-factor in
    $M_{k-1}$ containing $i\vph_1$ that extends $\vph$ to $v_{k-1}$.  Afterward, we extend $\vph$ to $v_k$ with either $M'_k$ or $M''_k$.
\end{proof}

Many proofs in this paper assume the existence of a minimum counterexample $G$ with cover $(\vec{D}, \sigma)$ to a correspondence packing theorem. We then delete a vertex, $v$; obtain a correspondence packing $\vph$ of $G-v$; and deduce some structure of the auxiliary bigraph $H_v$. To extend the correspondence packing $\vph$ to $v$, we often need to first modify the packing of nearby vertices. In nearly every case, the subgraph of $G$ induced by $v$ and the vertices where we modify the packing is a tree. In these cases, it is convenient to assume that in the cover of $G$, we have $\sigma(uv) = \id$ for each arc $uv$ in the tree. The following lemma justifies this assumption. This step
is standard in the study of correspondence coloring (see e.g.~\cite{dvovrak2018correspondence, dahlberg2023polynomial}).

\begin{lem}\label{lem:straightening}
    Let $(\vec{D},\sigma)$ be a $k$-cover of a graph $G$, and let $T \subseteq G$ be a tree that inherits the orientation $\vec{D}$. There exists a $k$-cover $(\vec{D}, \sigma')$ of $G$ such that $\sigma'(uv) = \id$ for every arc $uv \in E(T)$ and such that $G$ has a $(\vec{D}, \sigma')$-packing if and only if $G$ has a $(\vec{D},\sigma)$-packing.
\end{lem}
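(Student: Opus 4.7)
The plan is the standard ``gauge change'' or vertex-recoloring trick. For any assignment of permutations $(\pi_v)_{v \in V(G)}$ in $S_k$, one per vertex, I would define the modified cover by
\[
\sigma'(uv) := \pi_v \circ \sigma(uv) \circ \pi_u^{-1} \qquad \text{for every arc } uv \in \vec{D},
\]
and for any map $\vph:V(G)\to[k]$ define $\vph'$ by $\vph'(v):=\pi_v(\vph(v))$. Applying $\pi_v$ to both sides of the constraint $\sigma(uv)(\vph(u))\neq\vph(v)$ shows that $\vph$ is a $(\vec{D},\sigma)$-coloring if and only if $\vph'$ is a $(\vec{D},\sigma')$-coloring. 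Moreover, since each $\pi_v$ is a bijection, $\vph_i(v)\neq\vph_j(v)$ iff $\vph_i'(v)\neq\vph_j'(v)$, so $(\vph_1,\ldots,\vph_k)\mapsto(\vph_1',\ldots,\vph_k')$ is a bijection between $(\vec{D},\sigma)$-packings and $(\vec{D},\sigma')$-packings. In particular, one exists iff the other does, as required.

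It remains to choose the $(\pi_v)$ so that $\sigma'(uv)=\id$ for every arc $uv\in E(T)$. Unpacking, this requirement is equivalent to
\[
\pi_v=\pi_u\circ\sigma(uv)^{-1} \qquad\text{for every arc } uv\in E(T).
\]
To achieve it, I would fix a root $r$ of $T$, set $\pi_r:=\id$, and propagate the definition along $T$: for each $w\in V(T)\setminus\{r\}$, follow the unique $r$-$w$ path in $T$ one arc at a time, using the displayed formula (or its rearrangement $\pi_u=\pi_v\circ\sigma(uv)$ when the arc along the path points toward $r$) to define $\pi_w$ from its predecessor. Because $T$ is acyclic, no consistency conflict arises, so each $\pi_w$ is well-defined. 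For vertices $v\notin V(T)$, set $\pi_v:=\id$. The corresponding $\sigma'$ is then the desired cover.

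This proof has no genuine obstacle: it is a standard ``change of basis'' argument in the correspondence coloring literature (cited in the statement), and the key calculation is a one-line application of the fact that each $\pi_v$ is a bijection. The only point worth watching is the inherited orientation of $T$: arcs on the $r$-$w$ path can point either way, but the relation $\pi_v\circ\sigma(uv)=\pi_u$ determines either of $\pi_u,\pi_v$ from the other, so propagation proceeds smoothly regardless of which direction an arc is traversed.
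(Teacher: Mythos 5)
Your proposal is correct and is essentially the same argument as the paper's: the paper straightens the tree one leaf at a time by induction, which amounts to applying your vertex permutations $\pi_v$ sequentially and updating the permutations on the other arcs incident to the new vertex (its $\sigma'(vw):=\sigma(vw)\circ\sigma(uv)$ is exactly your conjugation with $\pi_v=\sigma(uv)^{-1}$). Your global formulation with an explicit packing bijection is a fine, slightly more detailed packaging of the same change-of-basis idea.
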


Essentially, we repeatedly grow the subtree $T'$ of ``straightened'' arcs to include an arc to a vertex $v$ previously outside $T'$; permute the colors at $v$ to straighten this new arc; and then update correspondingly the matchings for other arcs incident to $v$.

\begin{proof}
    We describe the $k$-cover $(\vec{D}, \sigma')$ below; that it is equivalent to $(\vec{D}, \sigma)$ follows easily by observation. We proceed by induction on $\ell$, the number of edges of $T$. If $\ell = 0$ there is nothing to prove. We assume then that $\ell \geq 1$ and that the result holds for all trees with fewer edges. Let $v$ be a leaf of $T$, and $u$ be its neighbor in $T$. We may assume $uv \in \vec{D}$; otherwise, we let $\sigma(uv) = \sigma^{-1}(vu)$. By induction, we may assume $\sigma(xy) = \id$ for every arc $xy$ in $E(T) \cap \vec{D}$ that is not $uv$. For each arc $xy$ in $\vec{D}$ that is not incident to $v$, let $\sigma'(xy) := \sigma(xy)$.  Let $\sigma'(uv) := \id$. Let $w$ be a vertex other than $u$ adjacent to $v$ in $E(G)$; as before, we may assume without loss of generality that $vw \in \vec{D}$. Let $\sigma'(vw) := \sigma(vw) \circ\sigma(uv)$ for each such arc $vw$.
\end{proof}

\section{Planar Graphs with Girth at least 4 and Lists of Size 5}
\label{P4-sec}

In this section, we prove that $\chisc(G)\le 5$ for every planar graph $G$ with girth at least 4.
Before proving this result, we present a few lemmas that we will need about matchings in bigraphs.

\begin{lem}
    \label{matching-lem}
    Fix a positive integer $k$.
    \begin{enumerate}
        \item[(1)] If $G$ is a $(2k+1,k+1)$-bigraph, then for all $e\in E(G)$ there exists a 1-factor containing $e$.
        \item[(2)] If $G$ is a $(2k+1,k)$-bigraph that has no 1-factor, then there exist $X\subseteq A$ and $Y\subseteq B$ 
            with $|X|=|Y|=k+1$ and $E(X,Y)=\emptyset$.  Furthermore, such $X$ and $Y$ are unique and $X$ is complete to 
            $B\setminus Y$ and also $Y$ is complete to $A\setminus X$.
    \end{enumerate}
\end{lem}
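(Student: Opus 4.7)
My plan for part (1) is a direct reduction to Proposition~\ref{easy-prop}. Given $e = ab$ with $a \in A$ and $b \in B$, I consider $G' := G - \{a, b\}$. Each surviving vertex loses at most one neighbor (namely $a$ or $b$), so $G'$ is a $(2k, k)$-bigraph, which has a 1-factor by Proposition~\ref{easy-prop}; adjoining $e$ yields the desired 1-factor of $G$ containing $e$.

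For part (2), the first step is to pin down the size of any obstruction. Since $G$ has no 1-factor, Hall's Theorem gives $X \subseteq A$ with $|N(X)| < |X|$, and Proposition~\ref{easy-prop} (with $s = 2k+1$ and $t = k$) forces $|X| = k+1$. Combining $|N(X)| < k+1$ with the lower bound $|N(X)| \geq k$ (since any single $x \in X$ has degree at least $k$) gives $|N(X)| = k$. Set $Y := B \setminus N(X)$; then $|Y| = k+1$ and $E(X, Y) = \emptyset$. The completeness properties follow by a tight degree count: each $x \in X$ has at least $k$ neighbors, all of which lie in $N(X)$, and $|N(X)| = k$, so $d(x) = k$ and $x$ is adjacent to every vertex of $B \setminus Y$; symmetrically, each $y \in Y$ has all its neighbors in $A \setminus X$, and $|A \setminus X| = k$ equals the lower bound on $d(y)$, so $y$ is adjacent to every vertex of $A \setminus X$. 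It is worth emphasizing that this argument works for \emph{any} pair $(X, Y)$ with $|X| = |Y| = k+1$ and $E(X, Y) = \emptyset$, not only the one I constructed from the obstruction.

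For uniqueness, I suppose $(X', Y')$ is another such pair and derive a contradiction from $X \neq X'$. Without loss of generality, pick $x \in X \setminus X'$. Applying the completeness property to $(X', Y')$, since $x \in A \setminus X'$, the vertex $x$ is adjacent to every vertex of $Y'$; but $x \in X$ together with $E(X, Y) = \emptyset$ says $x$ has no neighbor in $Y$. Hence $Y' \subseteq B \setminus Y$, contradicting $|Y'| = k+1 > k = |B \setminus Y|$. Thus $X = X'$, and then $Y = B \setminus N(X) = B \setminus N(X') = Y'$. I do not expect any serious obstacle here; the main subtlety is simply phrasing the completeness statement in a way general enough to be reused in the uniqueness step.
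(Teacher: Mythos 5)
Your proposal is correct and follows essentially the same route as the paper: part (1) deletes both endpoints of $e$ and applies Proposition~\ref{easy-prop} to the resulting $(2k,k)$-bigraph, and part (2) uses Hall's Theorem plus Proposition~\ref{easy-prop} to force $|X|=k+1$, $|N(X)|=k$, with the completeness statements coming from the tight degree count. The only (cosmetic) difference is in the uniqueness step, where you apply the completeness property to the competing pair $(X',Y')$ and contradict $|B\setminus Y|=k$, while the paper applies the completeness of the original pair to show $N(X')=B$ for any other $X'$ of size $k+1$; both hinge on the same degree-tightness.
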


\begin{proof}

    To prove (1), consider an arbitrary edge $ab$ and let $G':=G-\{a,b\}$.  By Proposition~\ref{easy-prop}, 
    we find a 1-factor $M'$ in $G'$, which is a $(2k,k)$-bigraph.  So $M'+ab$ is the desired 1-factor in $G$.

    Now we prove (2).  By Hall's Theorem, if $G$ has no 1-factor, then there exists $X\subseteq A$ such that $|N(X)|<|X|$.  
    By Proposition~\ref{easy-prop}, we know $|X|=k+1$.  
    Since $\delta(G)\ge k$, we see that $X$ is complete to $N(X)$.  Let $Y:=B\setminus N(X)$.
    Note that $|N(Y)|\le |A\setminus X| = 2k+1-(k+1)=k$, so $Y$ must be complete to $N(Y)$, since $\delta(G)\ge k$.  And by definition,
    $E(X,Y)=\emptyset$. 
    Finally, we prove uniqueness.  Consider  $X'\subseteq A$ with $|X'|=k+1$ and $X'\ne X$.  Now there exist $x_1,x_2\in X'$ 
    with $x_1\in X$ and $x_2\notin X$.  But now $|N(X')|\ge |N(x_1)|+|N(x_2)\setminus N(x_1)| = |N(X)|+|B\setminus N(X)| = k+(k+1)$.  
    So $X$ is unique, as is $Y$ by symmetry.
\end{proof}

\begin{lem}
    \label{1gives2-lem}
    Fix an integer $k$ with $k\ge 2$.  Let $G$ be a $(2k+1,k)$-bigraph.  If $G$ has a 1-factor, then there 
    exists $v'\in A$ such that for all $v\in A-v'$ there exist 1-factors $M_1^v$ and $M_2^v$ that contain 
    distinct edges incident with $v$.
\end{lem}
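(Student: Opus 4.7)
The plan is to fix a 1-factor $M$ of $G$ and classify each $v\in A$ as \emph{good} if at least two edges incident to $v$ each lie in some 1-factor of $G$, and \emph{bad} otherwise. Equivalently, $v$ is bad iff $vM(v)$ is the only edge at $v$ contained in any 1-factor. It suffices to prove that at most one vertex of $A$ is bad: the unique bad vertex (if any) serves as $v'$, and for each good $v$ we take $M_1^v$ and $M_2^v$ to be 1-factors through two distinct allowed edges at $v$.

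The technical tool I would introduce is the auxiliary digraph $D$ on vertex set $A$ with an arc $a\to a'$ whenever $aM(a')\in E(G)\setminus M$. Standard bipartite matching theory says that a non-matched edge $ab$ of $G$ lies in some 1-factor iff the arc $a\to M^{-1}(b)$ of $D$ lies on a directed cycle, equivalently iff $a$ and $M^{-1}(b)$ lie in a common strongly connected component (SCC) of $D$. Hence $v\in A$ is bad exactly when $\{v\}$ is a trivial SCC of $D$. Crucially, each vertex of $D$ has out-degree $d_G(v)-1\ge k-1$ and in-degree $d_G(M(v))-1\ge k-1$.

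The main step is the following size bound on extremal SCCs: every source SCC $S$ of the condensation of $D$ satisfies $|S|\ge k$, because each $v\in S$ has all $\ge k-1$ of its in-neighbors inside $S$; symmetrically, every sink SCC has size at least $k$. Suppose for contradiction that $v_1,v_2\in A$ are both bad. Using $k\ge 2$, each of $\{v_1\},\{v_2\}$ has nonzero in- and out-degree in the condensation, so neither is a source nor a sink. Walking backward from $\{v_1\}$ along in-arcs of the condensation reaches a source SCC $S\notin\{\{v_1\},\{v_2\}\}$, and walking forward reaches a sink SCC $T\notin\{\{v_1\},\{v_2\}\}$; moreover $S\ne T$, since $S$ has a directed path to $\{v_1\}$ but a sink has no out-arcs. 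Hence $S,T,\{v_1\},\{v_2\}$ are four pairwise disjoint subsets of $A$, giving $|A|\ge k+k+1+1=2k+2$, contradicting $|A|=2k+1$. The only subtlety is checking that the four SCCs really are distinct, which is precisely where the hypothesis $k\ge 2$ enters.
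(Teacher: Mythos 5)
Your proposal is correct, and it takes a genuinely different route from the paper. The paper argues structurally: if some $v'\in A$ has only one incident edge in a 1-factor, then the matched edge $e=v'w'$ lies in \emph{every} 1-factor, so $G-e$ has no 1-factor; rerunning the Hall-type obstruction analysis of Lemma~\ref{matching-lem}(2) on $G-e$ (with the slightly weaker degree bounds at $v',w'$) forces $v'\in X$, $w'\in Y$, and hence that $G-\{v',w'\}$ is a disjoint union of two copies of $K_{k,k}$, in which every edge lies in a 1-factor; so every other vertex of $A$ is good. You instead pass to the exchange digraph $D$ on $A$ obtained by contracting the matched edges, use the standard fact that a non-matching edge is allowed (lies in some 1-factor) iff the corresponding arc lies on a directed cycle, i.e.\ iff its endpoints share an SCC, and observe that a vertex of $A$ is bad exactly when it forms a singleton SCC; the degree hypothesis gives min in- and out-degree $k-1$ in $D$, so every source or sink SCC of the condensation has size at least $k$, and two bad (singleton, non-source, non-sink) vertices would force four pairwise disjoint SCCs of total size at least $2k+2>|A|$. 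Your counting argument is more abstract and self-contained modulo the folklore allowed-edge/alternating-cycle fact, avoids the uniqueness and completeness bookkeeping of Lemma~\ref{matching-lem}(2), and shows cleanly where $k\ge 2$ is used; the paper's argument stays entirely within the Hall/obstruction toolkit it has already built and yields the stronger structural conclusion (two disjoint $K_{k,k}$'s plus the forced edge), which is more information than the lemma itself requires but comes essentially for free from the earlier lemma.
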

\begin{proof}
    If each vertex $v\in A$ has at least two incident edges that appear in 1-factors, then we are done.  So assume instead 
    that there exists $v'\in A$ and edge $e$ incident with $v'$ such that $G-e$ has no 1-factor.  Let $G':=G-e$.  Now we
    essentially repeat the proof of Lemma~\ref{matching-lem}(2) applied to $G'$.  The main difference is that if $w'$ is the 
    other endpoint of $e$, then we have $d_{G'}(x)\ge k$ for all $x\in V(G)\setminus\{v',w'\}$, but only $d_{G'}(v')\ge k-1$ 
    and $d_{G'}(w')\ge k-1$.  Fortunately, that proof is robust enough that we still reach the following conclusion.
    There exists $X\subseteq A$ and $Y\subseteq B$ with $|X|=|Y|=k+1$ and $E_{G'}(X,Y)=\emptyset$.  Since $G$ has a 1-factor, 
    we conclude that $v'\in X$ and $w'\notin N_{G'}(X)$.  Thus, $G-\{v',w'\}$ consists of two copies of $K_{k,k}$.
    In this graph, every edge lies in many (in fact, $k!(k-1)!$) 1-factors.  This proves the lemma.
\end{proof}

\begin{lem}
    \label{k,k+1-lem}
    Fix an integer $k$ with $k\ge 3$.  Fix a graph $G$ and a $(2k-1)$-cover $(\vec{D},\sigma)$ such that $G$ has no $(\vec{D},\sigma)$-packing.  
    Now $G$ has no edge $vw$ with $d(v)=k$ and $d(w)\le k+1$ such that $G-v$ has a $(\vec{D},\sigma)$-packing.
\end{lem}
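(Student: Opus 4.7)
The plan is to suppose for contradiction that such $vw$ exists and $G-v$ admits a $(\vec D,\sigma)$-packing $\vph$; I will modify $\vph$ to produce a packing of $G$. First, apply Lemma~\ref{lem:straightening} to the one-edge tree $\{vw\}$ to assume $\sigma(vw)=\id$, so the $w$-contribution to the blocked set at $v$ for $\vph_j$ is exactly $\vph_j(w)$. Form the auxiliary bigraph $H_v$ from Definition~\ref{bigraph-defn}. Since $d(v)=k$, $H_v$ is a $(2k-1,k-1)$-bigraph; if it has a 1-factor we extend $\vph$ to $v$ and are done. Otherwise, Lemma~\ref{matching-lem}(2) applied with parameter $k-1$ yields a \emph{unique} obstruction $X\subseteq A$, $Y\subseteq B$ with $|X|=|Y|=k$, $E(X,Y)=\emptyset$, $X$ complete to $B\setminus Y$, and $Y$ complete to $A\setminus X$.

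The crucial structural consequence is that every $\vph_j\in Y$ has blocked set exactly $X$ at $v$, so each of $v$'s $k$ neighbors contributes exactly one distinct color of $X$; in particular $\vph_j(w)\in X$. Dually, $\vph_i\in B\setminus Y$ is complete to $X$ in $H_v$, so $\vph_i(w)\notin X$. Hence $\vph(\cdot)|_w$ restricts to a bijection $Y\to X$ (and $B\setminus Y\to A\setminus X$). Form $H_w$ from $\vph|_{G-v-w}$; since $d_{G-v}(w)\le k$, this is also a $(2k-1,k-1)$-bigraph and contains the 1-factor $\vph(\cdot)|_w$. By Lemma~\ref{1gives2-lem} applied with parameter $k-1$ (valid since $k\ge 3$), at most one vertex of $A_w$ is inflexible. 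Since $|X|=k\ge 3$, I pick $a_0\in X$ distinct from this inflexible vertex; let $\vph_0\in Y$ satisfy $\vph_0(w)=a_0$. The lemma then provides a 1-factor $M'$ of $H_w$ with $M'(\vph_0)=:c'\ne a_0$.

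Let $\vph'$ be the packing of $G-v$ obtained by using $M'$ for the $w$-colors; I claim the corresponding bigraph $H_v'$ has no obstruction, so $\vph'$ extends to $v$ and contradicts $G$ having no packing. The changes are localized to colorings in the alternating cycle of $M'\triangle\vph(\cdot)|_w$ containing $\vph_0$. For each $\vph_j\in Y$ whose $w$-color changed, its new blocked set at $v$ is $(X\setminus\{\vph_j(w)^{\text{old}}\})\cup\{\vph_j(w)^{\text{new}}\}$ (since no other neighbor of $v$ blocks $\vph_j(w)^{\text{old}}$), so $\vph_j$ \emph{gains} the edge to $\vph_j(w)^{\text{old}}\in X$ in $H_v'$. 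Each $\vph_i\in B\setminus Y$ whose $w$-color changed \emph{loses} at most one edge to $X$ (namely the edge to $\vph_i(w)^{\text{new}}$ when it lies in $X$) but still has at least $k-1\ge 2$ edges into $X$. Each $b\in A\setminus X$ loses at most one $Y$-neighbor, namely $M'^{-1}(b)$ if it is in $Y$.

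A short case analysis over $X^*\subseteq A$ with $|X^*|=k$ then rules out obstructions in $H_v'$. For $X^*=X$, the gained edge from $\vph_0$ already gives $|N_{H_v'}(X)|\ge(k-1)+1=k$. For $X^*$ containing at least one $b\in A\setminus X$, essentially all of $Y$ sits in $N_{H_v'}(b)\subseteq N_{H_v'}(X^*)$, while at most one vertex of $B\setminus Y$ fails to lie in $N_{H_v'}(X^*)$, giving $|N_{H_v'}(X^*)|\ge 2k-3\ge k$ for $k\ge 3$. The main obstacle is precisely this case analysis: replacing $\vph(\cdot)|_w$ by $M'$ can cascade through a long alternating cycle and change several colorings' $w$-colors simultaneously, so gained and lost edges must be tracked in parallel, and the uniqueness of $X$ in $H_v$ from Lemma~\ref{matching-lem}(2) is used to handle those $X^*$ that are disjoint from the modified colorings.
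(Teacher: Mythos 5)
Your proof is correct and takes essentially the same route as the paper's: extract the $k\times k$ obstruction structure from Lemma~\ref{matching-lem}(2), use Lemma~\ref{1gives2-lem} to repack $w$ so that a suitably chosen coloring matched into the obstruction changes its $w$-color, and then verify Hall's condition in the updated auxiliary graph by the same two-case count ($X^*=X$ gains the new edge; $X^*\not\subseteq X$ yields at least $(k-1)+(k-2)=2k-3\ge k$ neighbors). The only cosmetic differences (straightening $\sigma(vw)$, working from the color side of $H_v$ rather than the coloring side, and the unnecessary appeal to uniqueness of $X$ and to a single alternating cycle, neither of which your counting actually uses) do not affect the argument.
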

\begin{proof}
    Assume the contrary and let $(G,(\vec{D},\sigma))$, with $v,w\in V(G)$, be a counterexample; here $d(v)=k$ and $d(w)\le k+1$. 
    By hypothesis, $G-v$ has a $(\vec{D},\sigma)$-packing, $\vph$.  Assume we cannot extend $\vph$ to $v$.  
    By Lemma~\ref{matching-lem}(2), in $H_v$ there exist $X\subseteq A$ and $Y\subseteq B$ with 
    $|X|=|Y|=k$ and $E_{H_v}(X,Y)=\emptyset$.  By symmetry, we assume that $X=\{\vph_1,\ldots,\vph_k\}$ and 
    $Y=\{k,\ldots,2k-1\}$.  Further, by symmetry, we assume that $\vph$ uses color $i+k-1$ on $v$ for each $i\in[k]$.
    
    By Lemma~\ref{1gives2-lem} and symmetry, we assume 
    we can repack $w$ so that $\vph'_1$ avoids color $k$ at $w$; here $\vph'$ denotes this new $(\vec{D},\sigma)$-packing 
    of $G-v$.  Let $H'_v$ be the resulting new auxiliary graph for $v$.  If we can find a 1-factor in $H'_v$, then we 
    extend $\vph'$ to $v$ and we are done; so assume not.  

    By Hall's Theorem, there exists $X'\subseteq A$ such that $|N_{H'}(X')|<|X'|$; 
    by Proposition~\ref{easy-prop},  we have $|X'|=k$.  First, suppose that $X'=X$.  Now $|N_{H'_v}(X')|\ge 
    |N_{H_v}(X')|+|\{k\}| = (k-1)+1=|X'|$.  (Recall that each vertex of $N_{H_v}(X)$ loses at most one neighbor in $X$ when we 
    repack $w$; but in $H_v$ we had $X$ complete to $N_{H_v}(X)$.)  So assume instead that $X'\ne X$; thus, there exist $x',y'\in X'$
    such that $x'\in X$ and $y'\notin X$.  Now $|N_{H'_v}(X')|\ge |N_{H'_v}(x')|+|N_{H'_v}(y')\setminus N_{H'_v}(x')| \ge ((k-1)-1) 
    + k-1 \ge k = |X'|$, since $k\ge 3$.  Thus, no such $X'$ exists, a contradiction.
\end{proof}

Now we can prove the main result of this section.

\begin{thm}
    If $G$ is planar with girth at least 4, then $\chisc(G)\le 5$.
    More generally, $\chisc(G)\le 5$ for every graph $G$ with maximum average degree less than 4.
\end{thm}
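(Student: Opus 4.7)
The plan is to argue by minimum counterexample: let $G$ have $\mad(G) < 4$ and admit a 5-cover $(\vec{D}, \sigma)$ with no $(\vec{D}, \sigma)$-packing, chosen so that $|V(G)|$ is minimum. Two reductions are immediate from earlier results: Corollary~\ref{degen-cor} with $k = 5$ gives $\delta(G) \ge 3$, and Lemma~\ref{k,k+1-lem} with $k = 3$ gives that every 3-vertex of $G$ has all neighbors of degree at least 5.

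These two reductions alone do not suffice, since configurations such as $K_{3,5}$ satisfy both and have $\mad < 4$. So I would next establish a structural lemma bounding how many 3-vertices can hang off a 5-vertex: each 5-vertex of $G$ has at most three 3-neighbors. I would prove this by supposing to the contrary that a 5-vertex $w$ has four 3-neighbors $u_1, \ldots, u_4$; delete $u_1$; obtain a packing $\vph$ of $G - u_1$ by minimality; and analyze $H_{u_1}$, which is a $(5, 2)$-bigraph. If $H_{u_1}$ has a 1-factor then $\vph$ extends; otherwise Lemma~\ref{matching-lem}(2) yields an obstruction with $|X| = |Y| = 3$. After invoking Lemma~\ref{lem:straightening} to make the arcs of the tree at $u_1$ identity, the obstruction rigidly determines $\vph$ on the three neighbors of $u_1$ (in particular, the triple of values taken by any $\vph_j \in Y$ at $u_1$'s neighbors is forced to be a permutation of $X$). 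I would exploit this rigidity, together with the extra flexibility afforded by the 3-neighbors $u_2, u_3, u_4$ of $w$, to perform a swap repacking at $w$ that destroys the obstruction.

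With the structural lemma in hand, I discharge using initial charge $\mu(v) := d(v) - 4$ (so $\sum_v \mu(v) = 2|E(G)| - 4|V(G)| < 0$ by hypothesis) and the single rule: each $(\ge 5)$-vertex sends $\tfrac{1}{3}$ to each of its 3-neighbors. Writing $t_v$ for the number of 3-neighbors of $v$, each 3-vertex ends at $-1 + 3 \cdot \tfrac{1}{3} = 0$ (its three neighbors all have degree $\ge 5$); each 4-vertex remains at $0$; each 5-vertex ends at $1 - t_v/3 \ge 0$ by the structural lemma; and each $v$ of degree $d \ge 6$ ends at $d - 4 - t_v/3 \ge d - 4 - d/3 = (2d - 12)/3 \ge 0$. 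Thus the total final charge is nonnegative, contradicting the strict negativity of the initial total.

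The main obstacle is the structural lemma itself. The difficulty is that $H_w$ formed in $G - u_1$ is only a $(5, 1)$-bigraph, since $d(w) = 5$ and $w$ loses just the single edge $u_1 w$, so Lemma~\ref{1gives2-lem}---which is stated for $(5, 2)$-bigraphs and underpins the proof of Lemma~\ref{k,k+1-lem}---cannot be applied directly at $w$. Overcoming this will require combining the rigid combinatorial structure forced on $\vph$ by the obstruction at $u_1$ with the availability of the other three-neighbors $u_2, u_3, u_4$ of $w$, presumably through a simultaneous swap argument that leverages all of them.
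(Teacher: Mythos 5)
Your global structure matches the paper exactly: minimum counterexample, $\delta(G)\ge 3$ from Corollary~\ref{degen-cor}, Lemma~\ref{k,k+1-lem} with $k=3$ to force all neighbors of 3-vertices to have degree at least 5, the additional reducible configuration ``a 5-vertex with four 3-neighbors,'' and a discharging argument (yours in the $d(v)-4$ normalization, the paper's with charge $d(v)$, but they are the same computation and both are correct). The genuine gap is that the reducibility of the key configuration is not proved: you state the structural lemma, sketch an attack, and then yourself identify the fatal obstacle without resolving it. Concretely, your plan deletes only the 3-vertex $u_1$, finds the obstruction in the $(5,2)$-bigraph $H_{u_1}$ via Lemma~\ref{matching-lem}(2), and then hopes to repack the 5-vertex $w$ to destroy it. But the obstruction in $H_{u_1}$ is caused by the packings at $u_1$'s three neighbors, so it can only be destroyed by repacking one of \emph{them} (say $w$), and after unpacking $w$ in $G-u_1$ the auxiliary graph $H_w$ is merely a $(5,1)$-bigraph: Lemma~\ref{1gives2-lem} does not apply, and a degree-1 vertex in $H_w$ may admit no alternative packing at all. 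The other 3-neighbors $u_2,u_3,u_4$ are not adjacent to $u_1$, so swapping at them changes $H_w$ but not $H_{u_1}$ directly; turning that indirect flexibility into a guaranteed repacking of $w$ is precisely the hard step, and ``presumably through a simultaneous swap argument'' is not an argument. As written, the proof of the structural lemma is missing, and with it the whole theorem.

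The paper closes this gap with a different, and cleaner, ordering of the extension. Delete the entire configuration $\{v,w_1,w_2,w_3,w_4\}$ (5-vertex $v$ and its four 3-neighbors) and take a packing of the rest by minimality. Then $H_v$ is a $(5,4)$-bigraph (only one packed neighbor of $v$ remains), and each $H_{w_i}$, before $v$ is packed, is a $(5,3)$-bigraph (each $w_i$ has just two packed neighbors). The crucial observation is rigidity on the leaves rather than at $u_1$: a matching $M_i$ at $v$ can block the later extension to $w_i$ only if $H_{w_i}-M_i$ is exactly $K_{3,2}+K_{2,3}$, so by Lemma~\ref{matching-lem}(2) the blocking set $X_i$ and the critical edges $\tilde M_i$ are unique. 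A pigeonhole argument finds a vertex of $H_v$ lying in three of the four sets $X_i$; since $\delta(H_v)\ge 4$, one can choose two disjoint edges $e_1,e_4$ of $H_v$ avoiding all the $\tilde M_i$, and $H_v$ minus their endpoints is a $(3,2)$-bigraph, so $H_v$ has a 1-factor through $e_1$ and $e_4$. Packing $v$ with this 1-factor leaves every $H_{w_i}$ with a 1-factor, and the packing extends to all of $G$. The idea you are missing is exactly this ``delete the whole configuration and extend center-first, leaves-last'' step, which replaces your constrained $(5,1)$-bigraph repacking problem with a free choice of a good 1-factor in a roomy $(5,4)$-bigraph. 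If you substitute this argument for your sketched swap, the rest of your write-up (the two earlier reductions and the discharging) stands as is.
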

\begin{proof}
    If $G$ is planar with girth at least $g$, then by Euler's formula
    $G$ has average degree less than $2g/(g-2)$; so if $g=4$, then this is average is less than $2(4)/(4-2)=4$.  
    Thus, the first statement follows from the second, which we now prove.
    Assume the statement is false and let $(G,(\vec{D},\sigma))$ be a pair witnessing this; among all such pairs, 
    choose $G$ to minimize $|G|$.
    
    By Corollary~\ref{degen-cor}, we know that $\delta(G)\ge 3$.  By Lemma~\ref{k,k+1-lem} with $k=3$, we know that $G$ has no 
    edge $vw$ with $d(v)=3$ and $d(w)\le 4$.  We will also show below that $G$ has no 5-vertex adjacent to four or more 3-vertices.
    Assuming that such a 5-vertex is forbidden, we can conclude the proof with the following easy discharging argument.
    
    To reach a 
    contradiction, we will show that if $G$ has none of the forbidden configurations listed above, then $G$ has average degree
    at least 4, contradicting the hypothesis.  We give to each vertex an initial charge equal to its degree.  We use a single discharging rule: Each 3-vertex takes
    charge $1/3$ from each neighbor.  By the previous paragraph, $\delta(G)\ge 3$ and 3-vertices cannot be adjacent.  Thus, each 
    3-vertex finishes with charge $3+3(1/3)=4$.  As noted above, each 4-vertex has no adjacent 3-vertex.  Thus, each
    4-vertex starts and ends with charge 4.  As we show below, each 5-vertex $v$ has at most three adjacent 3-vertices.  
    Thus, $v$ finishes with charge at least $5-3(1/3)=4$.  Finally, if $d(v)\ge 6$, then $v$ finishes with charge at least 
    $d(v)-d(v)/3 = 2d(v)/3 \ge 2(6)/3 = 4$.  Since every vertex finishes with charge at least 4, the average degree of $G$ is at 
    least 4, contradicting our hypothesis.  This completes the proof.

    Now we need only to show that $G$ has no 5-vertex $v$ adjacent to at least four 3-vertices.  
    Assume to the contrary that $G$ has a 5-vertex $v$ with four adjacent 3-vertices $w_1,w_2,w_3,w_4$.
    By Lemma~\ref{lem:straightening}, we assume that $\sigma(vw_i)=\id$ for all $i\in[4]$.
    By the minimality of $G$, we have a $(\vec{D},\sigma)$-packing $\vph$ of $G-\{v,w_1,w_2,w_3,w_4\}$.  
    We now show how to extend $\vph$ to $G$.

    For each $i\in[4]$, let $M_i$ be a matching in $H_v$ (if one exists) such that extending $\vph$ at $v$ by $M_i$ makes it
    impossible to further extend to $w_i$.  When such an $M_i$ exists, 
    since $H_{w_i}-M_i$ is a $(5,2)$-bigraph, by Lemma~\ref{matching-lem}(2) there exists 
    $X_i\subseteq A_i$ (in $H_{w_i}-M_i$) such that $|N_{H_{w_i}-M_i}(X_i)|<|X_i|$.  
    Further, $H_{w_i}-M_i$ is precisely the graph $K_{3,2}+K_{2,3}$.  Thus, for each $i\in[4]$ the set $X_i$ and its incident 
    edges in $M_i$ are uniquely defined; call these edges $\tilde{M_i}$.  (That is, $H_{w_i}-M'_i$ has a 1-factor, for every
    matching $M'_i$ distinct from $M_i$; this is because each edge of $M_i$ joins the two parts of size 3 in $K_{3,2}+K_{2,3}$.)

    By Pigeonhole, there exists a vertex $v_1$ in (at least) all but 
    one such set $X_i$, since $\lceil 4(3)/5\rceil = 3$.  By symmetry, assume $v_1\in X_1\cap X_2\cap X_3$.  Because 
    $\delta(H_v)\ge 4$, we can choose edge $e_1$ incident to $v_1$ but not in $\tilde{M_1}\cup\tilde{M_2}\cup\tilde{M_3}$.  
    We can also choose a vertex $v_4$ in $X_4-v_1$ and an edge $e_4$ incident to $v_4$ but not in $\tilde{M_4}$ that forms a 
    matching with $e_1$.  Note that $H_v-(V(e_1)\cup V(e_4))$ is a $(3,2)$-bigraph, so has a 1-factor by 
    Lemma~\ref{matching-lem}(1).  Thus, $H_v$ has a 1-factor $M_v$ containing
    edges $e_1,e_4$.  We use $M_v$ to extend $\vph$ to $v$; call it $\vph'$.  By construction, we can now extend $\vph'$ to 
    each $w_i$ (since each $\tilde{M_i}$ was unique and none of them is a submatching of $M_v$).
\end{proof}

\section{Planar Graphs with Girth at least 5 and Lists of Size 4}
\label{P5-sec}

In this section, we prove that $\chisc(G)\le 4$ for every planar graph $G$ with girth at least 5.
For the proof, we need the following two easy lemmas.

\begin{lem}\label{canalwaysswapanedge}
    If $H$ is a $(4,2)$-bigraph, then every vertex in $H$ is incident 
    with at least two edges that are each contained in a 1-factor.
\end{lem}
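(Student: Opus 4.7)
The plan is to argue by contradiction. By symmetry, suppose $v \in A$ is a vertex with at most one incident edge lying in a 1-factor. Since $H$ is a $(4,2)$-bigraph, Proposition~\ref{easy-prop} (applied with $t = 2$) gives that $H$ itself has a 1-factor, and hence $v$ has \emph{exactly} one neighbor $w^*$ with $H - v - w^*$ containing a 1-factor. Let the remaining neighbors of $v$ be $w_1, \ldots, w_{d-1}$, where $d := d(v) \ge 2$.

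First I classify the Hall obstructions. For each $i$, $H - v - w_i$ has no 1-factor, so Hall's Theorem gives $X_i \subseteq A - v$ with $|N_{H - v - w_i}(X_i)| < |X_i|$. The case $|X_i| = 1$ is impossible since it would force some $a \in A - v$ to have $N_H(a) \subseteq \{w_i\}$, contradicting $\delta(H) \ge 2$. The case $|X_i| = 3$ is impossible since it would leave some $B$-vertex with no neighbor in $A - v$, so its only possible neighbor would be $v$, again contradicting $\delta(H) \ge 2$. Hence $|X_i| = 2$, say $X_i = \{a_i, b_i\}$, and since $d_H(a_i), d_H(b_i) \ge 2$, both vertices must share the same neighborhood $N_H(a_i) = N_H(b_i) = \{w_i, z_i\}$ for some $z_i \in B - w_i$.

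Next I split into cases on $d \in \{2, 3, 4\}$, exploiting that $A - v$ has only three 2-subsets. For $d = 4$, three obstructions force three pinned 2-element neighborhoods in $A - v$; by Pigeonhole some vertex $a$ lies in two obstructions, which pins $N(a)$ to a fixed $\{w_i, w_j\} \subseteq N(v)$. Examining the third obstruction then produces a $B$-vertex whose required neighborhood is a different 2-subset of $N(v)$ that must coincide with $\{w_i, w_j\}$, a contradiction. For $d = 3$, either the two obstructions coincide (all sharing the same pair in $A-v$, which forces the ``fourth'' $B$-vertex $z \notin N(v)$ to have only $u$, the third $A-v$ vertex, as a neighbor, giving $d(z) = 1$), or they share a single $A-v$ vertex (which forces all three vertices of $A-v$ to have the same 2-element neighborhood $\{w_i,w_j\}$, so Hall fails for $A - v$ in $H$ and $H$ has no 1-factor, contradiction). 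For $d = 2$, the single obstruction combined with the fact that $vw^*$ lies in a 1-factor pins down the matching on $H - v - w^*$: the two obstruction vertices must absorb the neighbors listed in their common neighborhood, forcing the third $A - v$ vertex to be matched to the unique $B$-vertex outside $N(v)$, which then has only that one possible neighbor in $A$, contradicting $\delta(H) \ge 2$.

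The main obstacle will be the bookkeeping in the case analysis; in particular, the $d = 2$ case is subtlest because there is only one Hall obstruction, so we must extract the extra structural information from the existence of a 1-factor of $H$ through $vw^*$. The recurring mechanism is that identical 2-element neighborhoods on several vertices of $A - v$ force ``excess'' vertices to be matched outside $N(v)$, blocking the degree-$2$ requirement on some isolated $B$-vertex.
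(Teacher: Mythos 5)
Your proof is correct in substance, though it takes a longer route than the paper's. Both arguments open identically: for an edge at $v$ that lies in no 1-factor, Hall's Theorem applied to the $(3,1)$-bigraph obtained by deleting $v$ and that neighbor forces an obstruction of size exactly $2$ whose two vertices, by $\delta(H)\ge 2$, have the same two-element neighborhood in $H$. The paper needs only one such application: applying $\delta(H)\ge 2$ on the other side as well, the forced edges form a 2-factor of $H$ minus the chosen edge, and this 2-factor splits into two edge-disjoint 1-factors through $v$, an immediate contradiction with no case analysis. You instead run the obstruction analysis for every neighbor $w_i\ne w^*$ and case on $d(v)\in\{2,3,4\}$, deriving contradictions from a $B$-vertex of degree at most $1$ or from a Hall violation in $H$ itself; this works, because any two of your obstructions are 2-subsets of the 3-set $A-v$ and hence intersect, which pins the neighborhoods exactly as you claim, but it costs noticeably more bookkeeping than the paper's one-shot construction. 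Two statements should be repaired, though neither is fatal: in the $d=4$ case the offending vertex supplied by the third obstruction is a vertex of $A-v$ whose required neighborhood is a 2-subset of $B$, not a ``$B$-vertex''; and in the $d=2$ case there are two $B$-vertices outside $N(v)$ --- the one you need is the vertex outside $N(v)\cup\{z_1\}$ --- and you should also dispose of the sub-case $z_1=w^*$, in which $H-v-w^*$ has no 1-factor at all, contradicting the choice of $w^*$ directly.
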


\begin{proof}
    Suppose not. Let $H$ be a $(4,2)$-bigraph and fix $u \in A$ that is incident with at most one edge in a 1-factor. Since $\delta(H) \geq 2$, 
    some edge $uw$ is not contained in a 1-factor. 
    By Hall's Theorem $H - u - w$ contains a set $X \subseteq A$ with $|N(X)| < |X|$. 
    Since $\delta(H) \geq 2$, we know $\delta( H-u-w) \geq 1$. Proposition~\ref{easy-prop} implies that $|X| = 2$ and $|N(X)| = 1$. 
    Thus $H - u - w$ contains vertex-disjoint paths $a_1b_1a_2$ and $b_2a_3b_3$, where $X = \{a_1,a_2\}$, and $a_3 \in A\setminus X$;  
    see Figure~\ref{lem15-fig}.
    Since $\delta(H) = 2$, we know $b_2,b_3\in N_{H}(u)$ and $a_1,a_2\in N_H(w)$. 
    
    Thus, deleting $uw$ from the edges listed above yields a 2-factor, which clearly contains two edge-disjoint 1-factors; namely, 
    $\{ub_2,a_1w, a_2b_1,a_3b_3\}$ and $\{ub_3,a_1b_1,a_2w,a_3b_2\}$. This is
    a contradiction.
    \end{proof}

   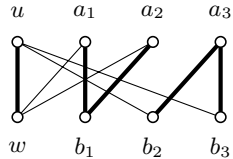
\begin{figure}[!h]
    \centering
\begin{tikzpicture}[xscale = .9, scale=1]
    \def\off{.4cm}
\tikzset{every node/.style=uStyle}

    \foreach \i in {1, 2, 3}
    {
        \draw[thick] (\i,1) node (a\i) {} (\i,0) node (b\i) {};
        \draw (a\i) ++ (0,\off) node[lStyle] {\footnotesize{$a_{\i}$}};
        \draw (b\i) ++ (0,-\off) node[lStyle] {\footnotesize{$b_{\i}$}};
    }

    \draw (0,1) node (u) {} (0,0) node (w) {};
        \draw (u) ++ (0,\off) node[lStyle] {\footnotesize{$u$}};
        \draw (w) ++ (0,-\off) node[lStyle] {\footnotesize{$w$}};

    \draw[ultra thick] (u) -- (w) (a1) -- (b1) -- (a2) (b2) -- (a3) -- (b3);
    \draw (a1) -- (w) -- (a2) (b2) -- (u) -- (b3);
\end{tikzpicture}
\captionsetup{width=.5\linewidth}
       \caption{$H$ contains two edge-disjoint 1-factors. (These appear in the 2-factor formed by deleting edge $uw$.)\label{lem15-fig}}
\end{figure}

\begin{lem}\label{lem:girth5condition}
    If $H$ is a $(4,1)$-bigraph, then $H$ has a 1-factor unless two vertices in a part each have degree 1 and have a common neighbor.
\end{lem}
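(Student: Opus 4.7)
The plan is a direct application of Hall's Theorem combined with Proposition~\ref{easy-prop}. Suppose $H$ has no 1-factor; then there exists an obstruction $X \subseteq A$ with $|N(X)| < |X|$, and Proposition~\ref{easy-prop} with $s=4$ and $t=1$ restricts us to $|X| \in \{2, 3\}$. I would analyze these two sizes separately and show that each one forces the exceptional configuration described in the statement.

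For $|X|=2$: since $\delta(H) \ge 1$, we must have $|N(X)| = 1$, so the two vertices of $X$ each have degree exactly 1 and share a common neighbor in $B$. This is the exception, witnessed inside part $A$. For $|X|=3$: write $A \setminus X = \{a_4\}$ and $Y := B \setminus N(X)$, so $|Y| \ge 2$. By definition every vertex of $Y$ has no neighbor in $X$, hence its only potential neighbor is $a_4$; combined with $\delta(H)\ge 1$, any two vertices of $Y$ each have degree $1$ with common neighbor $a_4$. This is the exception, now witnessed inside part $B$, which is valid because the statement only demands two such vertices in \emph{a} part.

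The converse direction (that the exception prevents a 1-factor) is immediate: two degree-$1$ vertices sharing their unique neighbor cannot both be matched. The whole argument is routine Hall's Theorem bookkeeping, so I do not anticipate any real obstacle; the only subtlety worth flagging is that the $|X|=3$ branch exhibits the forbidden configuration on the $B$ side rather than the $A$ side, so one should state and use the hypothesis symmetrically from the start.
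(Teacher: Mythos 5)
Your proposal is correct and follows essentially the same route as the paper's proof: invoke Hall's Theorem, use Proposition~\ref{easy-prop} to restrict an obstruction to sizes $2$ and $3$, and in the size-$3$ case observe that the vertices of $B\setminus N(X)$ have all their neighbors confined to the single vertex of $A\setminus X$, yielding the exceptional configuration on the $B$ side. The added converse remark is harmless but not needed for the statement as used in the paper.
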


\begin{proof}
    Suppose not, and let $H$ be a counterexample.  By Hall's Theorem, there exists $X \subseteq A$ such that $|N(X)| < |X|$.  Proposition~\ref{easy-prop} implies that $2\le |X|\le 3$.
    If $|X|=2$, then the two vertices in $X$ have degree 1 and have a common neighbor, a contradiction.
    Similarly, if $|X|=3$ and $|N(X)|\le 
    2$, then two vertices in $B\setminus N(X)$ have degree 1 and have a common
    neighbor, again a contradiction.
\end{proof}

We now prove our main theorem of the section.
By Example~\ref{ex:cycles} we have $\chisc(C_k)=4$ for all $k\ge 3$; thus, the bound in 
Theorem~\ref{thm:upperboundgirth5} is optimal.

\begin{thm}\label{thm:upperboundgirth5}
    If $G$ is a planar graph of girth at least five, then $\chisc(G) \le 4$.  More generally, $\chisc(G)\le 4$ whenever $G$ is 
    triangle-free and has maximum average degree less than $10/3$. 
\end{thm}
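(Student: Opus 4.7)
My plan is to adapt the template of the girth-4 theorem just proved. Let $(G, (\vec D, \sigma))$ be a minimum counterexample; so $G$ is triangle-free with $\mad(G) < 10/3$, admits no $(\vec D, \sigma)$-packing, but every proper subgraph does. By Corollary~\ref{degen-cor}, $\delta(G) \ge 3$. I will aim for a single reducible configuration: $G$ has no edge between two 3-vertices. Granted this, a simple discharging argument finishes the proof. Assign each vertex initial charge equal to its degree, and let each 3-vertex take $\frac{1}{6}$ from each of its three neighbors. Since 3-vertices are pairwise non-adjacent, each 3-vertex finishes with $3 + 3\cdot\frac{1}{6} = \frac{7}{2}\ge \frac{10}{3}$, while each $d$-vertex with $d\ge 4$ has at most $d$ 3-neighbors and so finishes with at least $d - \frac{d}{6} = \frac{5d}{6}\ge \frac{10}{3}$. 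Every vertex ending with charge at least $\frac{10}{3}$ contradicts $\mad(G) < \frac{10}{3}$.

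To establish that no two 3-vertices are adjacent, I will suppose 3-vertices $v$ and $w$ are adjacent, and write $u_1, u_2$ for $v$'s other neighbors and $y_1, y_2$ for $w$'s. Since $G$ is triangle-free, these six vertices are distinct and the five edges $vw, vu_1, vu_2, wy_1, wy_2$ form a tree $T$; by Lemma~\ref{lem:straightening} I may assume $\sigma = \id$ on every arc of $T$. By minimality, $G - v$ admits a packing $\vph$. The auxiliary bigraph $H_v$ is a $(4,1)$-bigraph, so by Lemma~\ref{lem:girth5condition} either $H_v$ has a 1-factor, in which case we extend $\vph$ to $v$ and are done, or else there is an obstruction of a very restricted form: two degree-1 vertices in one part sharing a common neighbor. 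By bipartite symmetry, the main case to handle is that two colors $c_1, c_2 \in A$ have $N_{H_v}(c_1) = N_{H_v}(c_2) = \{\vph_j\}$ for some $j$.

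The straightening translates this into the concrete statement that, for each $i \in [4]\setminus\{j\}$, both $c_1$ and $c_2$ appear among $\vph_i(w), \vph_i(u_1), \vph_i(u_2)$. A counting argument then locates an index $i_1 \neq j$ with $\vph_{i_1}(w) = c_1$ and $\vph_{i_1}(u_1), \vph_{i_1}(u_2) \neq c_1$. Because $w$ is a 3-vertex, the auxiliary bigraph $H_w$ formed from the restriction of $\vph$ to $G - v - w$ is a $(4, 2)$-bigraph; by Lemma~\ref{canalwaysswapanedge} I can pick a 1-factor $M'$ in $H_w$ that reassigns $\vph_{i_1}$ at $w$ to a color other than $c_1$. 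The resulting packing $\vph'$ of $G - v$ has $c_1$ adjacent to both $\vph_j$ and $\vph_{i_1}$ in the new bigraph $H_v'$, destroying the original obstruction. A case analysis, using the full flexibility of Lemma~\ref{canalwaysswapanedge} in choosing $M'$, will confirm that $H_v'$ admits a 1-factor, which extends $\vph'$ to $v$. The symmetric case, in which the obstruction is two colorings with a common sole neighbor $c$, is handled by an analogous repacking of $w$ at one of those colorings to open up a second available color at $v$.

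The main obstacle will be this last step: verifying that the repacking at $w$ does not introduce a fresh obstruction in $H_v'$. One must track how the 1-factor $M'$ chosen in $H_w$ simultaneously reshuffles all of $\vph_1(w), \ldots, \vph_4(w)$, and in the delicate subcases appeal a second time to Lemma~\ref{canalwaysswapanedge} to pick $M'$ more carefully. Everything else is either a direct application of the matching lemmas or straightforward charge accounting.
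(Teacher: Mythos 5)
Your discharging arithmetic is fine, but it is aimed at the wrong configuration: the reducibility step you need --- that a minimal counterexample has no edge between two 3-vertices, provable by repacking only the one degree-3 neighbor $w$ of $v$ --- fails. In your setting the only flexibility is at $w$: the other neighbors $u_1,u_2$ of $v$ and the other neighbors $y_1,y_2$ of $w$ may have large degree, so the forbidden matchings they impose (call them $\pi_1,\pi_2$ at $v$ and $\rho_1,\rho_2$ at $w$, all perfect matchings of $K_{4,4}$ after straightening $\sigma(wv)=\id$) are completely out of your control. Choose them so that $K_{4,4}-\pi_1-\pi_2$ is the $8$-cycle $\vph_1 1\vph_2 2\vph_3 3\vph_4 4$, whose only $1$-factors are $\nu_1=(1,2,3,4)$ and $\nu_2=(4,1,2,3)$ in the paper's tuple notation, and so that $H_w=K_{4,4}-\rho_1-\rho_2$ is the $8$-cycle $\vph_1 1\vph_2 2\vph_3 4\vph_4 3$, whose only $1$-factors are $\mu_1=(1,2,4,3)$ and $\mu_2=(3,1,2,4)$. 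One checks directly that each $\mu_a$ shares an edge with each $\nu_b$ (e.g.\ $\mu_1$ agrees with $\nu_1$ in coordinate $1$ and with $\nu_2$ in coordinate $4$). Since the valid packings of $w$ are exactly $\mu_1,\mu_2$, and a $1$-factor of the updated $H_v'$ would have to be a $\nu_b$ disjoint from the chosen $\mu_a$, \emph{no} repacking of $w$ makes $\vph$ extendable to $v$. So the ``main obstacle'' you defer --- that the repack at $w$ might create a fresh obstruction --- is not a technical verification but a genuine dead end: Lemma~\ref{canalwaysswapanedge}, applied once or twice, cannot help when $H_w$ has only two $1$-factors and both fail.

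This is exactly why the paper does not use your configuration. Its discharging rule has 3-vertices pull $1/6$ only from neighbors of degree at least $4$, which forces the stronger unavoidable configuration of a 3-vertex $v$ with \emph{two} 3-neighbors $x$ and $y$ (a path $xvy$ of 3-vertices); the reducibility argument then exploits flexibility at both $x$ and $y$, trying several alternative repackings at each (the extensions $M_y^1,M_y^2,M_y^3$, $M_x'$, $M_x''$, etc.), and even then runs through a lengthy two-case analysis pinned down by the structure forced by Lemma~\ref{lem:girth5condition} (namely $\{\vph(x),\vph(y),\vph(z)\}=\{(2,4,1,3),(3,2,4,1),(4,3,1,2)\}$ up to symmetry). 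To repair your proposal you would need to (i) change the discharging so that the unavoidable configuration is a 3-vertex with two 3-neighbors, and (ii) carry out the genuinely two-vertex repacking analysis; the single-neighbor argument you outline cannot be completed.
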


\begin{proof}
    By Euler's formula, every planar graph with girth at least $g$ has maximum average degree less than $2g/(g-2)$; so if $g=5$,
    then this average is less than $2(5)/(5-2)=10/3$.  
    Thus, the first statement follows from the second, which we now prove.  Suppose not, and let $(G, (\vec{D},\sigma))$ be a 
    counterexample minimizing $|G|$.  Corollary~\ref{degen-cor} implies that $\delta(G)\ge 3$.  We now show that $G$ contains a 
    3-vertex with at least two adjacent 3-vertices.  Suppose the contrary.  We use discharging where every vertex $v$ has initial 
    charge $d(v)$.  Our single discharging rule is that every 3-vertex takes charge $1/6$ from every neighbor $w$ with $d(w)\ge 4$.  
    Now if $d(v)=3$, then $v$ finishes with at least $3+2(1/6)=10/3$.  And if $d(v)\ge 4$, then $v$ finishes with at least 
    $d(v)-d(v)/6=5d(v)/6\ge 20/6=10/3$.  Thus $G$ has average degree at least $10/3$, which contradicts our hypothesis.  Hence, 
    we assume that $G$ contains a path $xvy$ with $d(x) = d(v) = d(y) = 3$.  Let $z$ be the third neighbor of $v$ distinct 
    from $x$ and $y$. 

    By Lemma~\ref{lem:straightening}, we may assume that $\sigma(e) = \id$ for every edge $e \in \{xv, vy, vz\}$. By minimality, 
    $G-v$ has a $(\vec{D},\sigma)$-packing, $\vph_1, \vph_2, \vph_3, \vph_4$; for short, we call this $\vph$. Let $H_v$ 
    be the auxiliary graph for extending $\vph$ to $v$, where $A = \{1,2,3,4\}$ and $B = \{\vph_1, \vph_2, \vph_3, \vph_4\}$. 
    Since $G$ is a minimum counterexample, $\vph$ does not extend to $v$, and so $H_v$ has no 1-factor. 
    By Lemma~\ref{lem:girth5condition}, $H_v$ contains two vertices of degree 1 with a common neighbor. 

    First suppose $H_v$ has no matching of size three.  Since $\delta(H_v) \geq 1$, there exists a set $X \subseteq A$ 
    with $|X| = 3$ and $|N(X)| = 1$.  By symmetry, we assume that $X = \{1, 2, 3\}$, and that $N(X) = \{\vph_1\}$; see the left of Figure~\ref{case0-fig}.
    This means that in $\vph_1$, all three neighbors of $v$ are colored 4. Unpack $y$, and let $H_y$ be the auxiliary graph for 
    extending $\vph$ to $y$. Note that $\delta(H_y) \geq 2$, so by Lemma~\ref{canalwaysswapanedge} there exists an extension 
    $M_y$ of $\vph$ to $y$ where $\vph_1(y) \neq 4$. After repacking $y$ via $M_y$, the updated auxiliary graph $H_v'$ 
    contains a matching of size three. 
    We assume $H_v'$ has no 1-factor (since otherwise we are done); so, again by Lemma~\ref{lem:girth5condition}, $H_v'$ contains 
    two vertices of degree 1 with a neighbor in common.  By renaming $H_v'$ to $H_v$, we henceforth assume by symmetry that
    $E(H_v) \supset \{1\vph_1, 1\vph_2, 2\vph_3, 3\vph_3, 4\vph_4\}$ and either $d(\vph_1) = d(\vph_2) = 1$ or $d(2) = d(3) = 1$.
    See the right of Figure~\ref{case0-fig}. 

\begin{figure}[!h]
    \centering
\begin{tikzpicture}[xscale = .9, scale=1]
    \def\off{.4cm}
\tikzset{every node/.style=uStyle}

    \begin{scope}
    \foreach \i in {1, 2, 3, 4}
    {
        \draw[thick] (\i,1) node (a\i) {} (\i,0) node (b\i) {};
        \draw (a\i) ++ (0,\off) node[lStyle] {\footnotesize{$\i$}};
        \draw (b\i) ++ (0,-\off) node[lStyle] {\footnotesize{$\vph_{\i}$}};
    }

        \draw (a1) -- (b1) -- (a2) (b1) -- (a3) (b2) -- (a4) -- (b3) (b4) -- (a4);
    \end{scope}

    \begin{scope}[xshift=2in]
    \foreach \i in {1, 2, 3, 4}
    {
        \draw[thick] (\i,1) node (a\i) {} (\i,0) node (b\i) {};
        \draw (a\i) ++ (0,\off) node[lStyle] {\footnotesize{$\i$}};
        \draw (b\i) ++ (0,-\off) node[lStyle] {\footnotesize{$\vph_{\i}$}};
    }
        \draw (b1) -- (a1) -- (b2) (a2) -- (b3) -- (a3) (b4) -- (a4);

    \end{scope}

\end{tikzpicture}
\captionsetup{width=.75\linewidth}
           \caption{Left: The original graph $H_v$.  Right: The new $H_v$, after repacking $y$.
           \label{case0-fig}}
\end{figure}
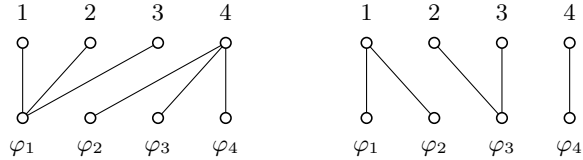

    If $d(\vph_1) = d(\vph_2) = 1$, then the three neighbors $n_1$, $n_2$, and $n_3$ of $v$ satisfy, up to possibly swapping the 
    names of colors or colorings, that $(\vph_1(n_1), \vph_2(n_1)) = (3,2)$ and $(\vph_1(n_2), \vph_2(n_2)) = (4,3)$ and 
    $(\vph_1(n_3), \vph_2(n_3)) = (2,4)$. See the left of Figure~\ref{case1a-fig}.  Since $2\vph_3, 3\vph_3\in E(H_v)$, it follows that $\vph_3(n_3) = \vph_3(n_2) = 1$, 
    and so $\vph_4(n_3) = 3$ and $\vph_4(n_2) = 2$. As $4\vph_4 \in E(H_v)$, we have moreover that $\vph_4(n_1) = 1$ 
    and so $\vph_3(n_1) = 4$. Thus we have
    $\{\vph(x),\vph(y),\vph(z)\}=\{(2,4,1,3),(3,2,4,1),(4,3,1,2)\}$.

    Similarly, if $d(2) = d(3) = 1$, then the three neighbors $n_1$, $n_2$, and $n_3$ of $v$ satisfy, up to possibly swapping the
    names of colors or colorings, that $(\vph_1(n_1),\vph_2(n_1))=(3,2)$ and $(\vph_2(n_2),\vph_4(n_2))=(3,2)$ and 
    $(\vph_4(n_3),\vph_1(n_3))=(3,2)$.  Since $4\vph_4\in E(H_v)$, we have $\vph_4(n_1)=1$ and $\vph_3(n_1)=4$.
    Since $1\vph_1\in E(H_v)$, we have $\vph_1(n_2)=4$ and $\vph_3(n_2)=1$.  Finally, since $1\vph_2\in E(H_v)$, we have
    $\vph_2(n_3)=4$ and $\vph_3(n_3)=1$.
    Thus, in either case $\{\vph(x),\vph(y),\vph(z)\}=\{(2,4,1,3),(3,2,4,1),(4,3,1,2)\}$.
    So $H_v$ is shown on the right in Figure~\ref{case0-fig}.

    We will argue by repacking $x$ and $y$ that there is a $(\vec{D},\sigma)$-packing of $G-v$ that extends to $v$. 
    We split into two cases.  
    As usual, we may write $M_u$ to denote the matching that encodes the packing $\vph$ at $u$.
    When $M_u\cap M_v\ne \emptyset$, we typically write $M_{u\setminus v}$ to denote $M_u\setminus M_v$.
    For brevity, we may also write $(i,j,k,\ell)$, where $\{i,j,k,\ell\}=[4]$, 
    to denote the matching $\{\vph_1i,\vph_2j,\vph_3k,\vph_4\ell\}$.  

    \textbf{Case 1: $\bm{\vph(z) \neq (3,2,4,1)}$. } By symmetry between $x$ and $y$ we assume $\vph(y) = (3,2,4,1)$. 
    (Later, we will consider whether or not $\vph(x)=(2,4,1,3)$.)
    Unpack $y$, and let $H_y$ be the auxiliary graph for extending $\vph$ to $y$.   Note that now $H_v+M_y$ is a 
    $(4,2)$-bigraph that contains the matchings $(1,2,3,4)$ and $(3,1,2,4)$ and $(3,2,4,1)$; see the right of Figure~\ref{case1a-fig}.  

    \begin{figure}[!t]
    \centering
\begin{tikzpicture}[xscale = .9, scale=1]
    \def\off{.4cm}
\tikzset{every node/.style=uStyle}

    \begin{scope}
    \foreach \i in {1, 2, 3, 4}
    {
        \draw[thick] (\i,1) node (a\i) {} (\i,0) node (b\i) {};
        \draw (a\i) ++ (0,\off) node[lStyle] {\footnotesize{$\i$}};
        \draw (b\i) ++ (0,-\off) node[lStyle] {\footnotesize{$\vph_{\i}$}};
    }

    \draw[very thick] (b1) -- (a1) -- (b2) (a2) -- (b3) -- (a3) (b4) -- (a4);
    \draw[-, decorate, decoration={snake, segment length=3pt, amplitude=.5pt}] 
        (b1) -- (a2) (b2) -- (a4) (b3) -- (a1) (b4) -- (a3);
        \draw[dashed] (b1) -- (a4) (b2) -- (a3) (b3)++(0,-.2*\off) --++ (-2cm,1) (b4) -- (a2);
    \draw
        (b1) -- (a3) (b2) -- (a2) (b3) -- (a4) (b4) -- (a1);
    \end{scope}

    \begin{scope}[xshift=2in]
    \foreach \i in {1, 2, 3, 4}
    {
        \draw[thick] (\i,1) node (a\i) {} (\i,0) node (b\i) {};
        \draw (a\i) ++ (0,\off) node[lStyle] {\footnotesize{$\i$}};
        \draw (b\i) ++ (0,-\off) node[lStyle] {\footnotesize{$\vph_{\i}$}};
    }

    \draw[very thick] (b1) -- (a1) (b2) -- (a2) (b3) -- (a3) (b4) -- (a4);
    \draw (b1) -- (a3) (b2) -- (a1) (b3) -- (a2) (b4) ++(-0.20*\off,0) --++ (0,1);
    \draw[dashed] (b1) ++(0,.2*\off) --++ (2,1) (b2) ++(-0.2*\off,0) --++ (0,1) (b3) -- (a4) (b4) -- (a1);
    \end{scope}

\end{tikzpicture}
\captionsetup{width=.75\linewidth}
           \caption{Left: The edges of $H_v$ in bold, together with the edges of $M_{n_1}$ (plain), the edges of $M_{n_2}$ (dashed), and the edges of $M_{n_3}$ (wavy).
           Right: $H_v+M_y$ contains the matchings $(1,2,3,4)$ (bold), $(3,1,2,4)$ (plain), and $(3,2,4,1)$ (dashed).
           \label{case1a-fig}}
\end{figure}
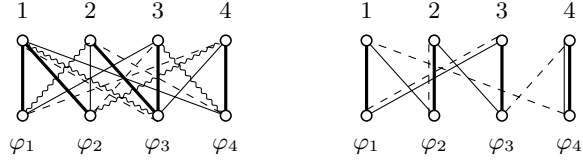

    So if an extension of the packing
    to $y$ cannot be extended to $v$, then it must intersect each of these 3 matchings.
    By Lemma~\ref{canalwaysswapanedge}, since $\delta(H_y) \geq 2$, there exists an extension of $\vph$ to $y$ where 
    $\vph_1(y) \neq 3$; call it $M_y^2$. It is straightforward to check that this extension to $y$ 
    extends (further) to a $(\vec{D},\sigma)$-packing 
    of $G$, using one of the above 3 matchings at $v$, unless $M_y^2=(1,2,3,4)$; to see this, begin by considering how 
    $M_y^2$ could interesect both of the latter two matchings above.
    So we assume $M_y^2=(1,2,3,4)$.  By Lemma~\ref{canalwaysswapanedge}, there exists an extension of $\vph$ to $y$ where 
    $\vph_2(y) \neq 2$; call it $M_y^3$.  By a very similar argument, we can check that this extension extends to $v$, that is, to a 
    $(\vec{D},\sigma)$-packing of $G$ unless $M_y^3=(3,1,2,4)$; so we assume this is the case.
    For consistency, we denote $(3,2,4,1)$ by $M_y^1$.
    So $(3,2,4,1), (1,2,3,4)$, and $(3,1,2,4)$ are all valid extensions of $\vph$ to $y$; 
    we call these the first, second, and third extensions of $\vph$ to $y$, and, as mentioned above, 
    we denote them by $M_y^1$, $M_y^2$, and $M_y^3$.

           \begin{figure}[!h]
    \centering
\begin{tikzpicture}[xscale = .9, scale=1]
    \def\off{.4cm}
\tikzset{every node/.style=uStyle}

    \begin{scope}
    \foreach \i in {1, 2, 3, 4}
    {
        \draw[thick] (\i,1) node (a\i) {} (\i,0) node (b\i) {};
        \draw (a\i) ++ (0,\off) node[lStyle] {\footnotesize{$\i$}};
        \draw (b\i) ++ (0,-\off) node[lStyle] {\footnotesize{$\vph_{\i}$}};
    }

    \draw[very thick] (b1) -- (a1) (b2) -- (a4) (b3) -- (a2) (b4) -- (a3);
    \draw (b1) -- (a2) (b2) -- (a1) (b3) -- (a3) (b4) -- (a4);
    \end{scope}

    \begin{scope}[xshift=2in]
    \foreach \i in {1, 2, 3, 4}
    {
        \draw[thick] (\i,1) node (a\i) {} (\i,0) node (b\i) {};
        \draw (a\i) ++ (0,\off) node[lStyle] {\footnotesize{$\i$}};
        \draw (b\i) ++ (0,-\off) node[lStyle] {\footnotesize{$\vph_{\i}$}};
    }

    \draw[very thick] (b1) -- (a2) (b2) -- (a1) (b3) -- (a4) (b4) -- (a3);
    \draw (b1) -- (a3) (b2) -- (a4) (b3) -- (a2) (b4) -- (a1);
    \end{scope}

    \begin{scope}[xshift=4in]
    \foreach \i in {1, 2, 3, 4}
    {
        \draw[thick] (\i,1) node (a\i) {} (\i,0) node (b\i) {};
        \draw (a\i) ++ (0,\off) node[lStyle] {\footnotesize{$\i$}};
        \draw (b\i) ++ (0,-\off) node[lStyle] {\footnotesize{$\vph_{\i}$}};
    }

    \draw[very thick] (b1) -- (a1) (b2) -- (a2) (b3) -- (a4) (b4) -- (a3);
    \draw (b1) -- (a2) (b2) -- (a4) (b3) -- (a3) (b4) -- (a1);
    \end{scope}

\end{tikzpicture}
\captionsetup{width=.75\linewidth}
           \caption{Left: $H_v^1$ decomposes into the 1-factors $(1,4,2,3)$ and $(2,1,3,4)$.  Center: $H^2_v$ decomposes into the 1-factors $(2,1,4,3)$ and $(3,4,2,1)$.  Right: $H_v^3$ decomposes into the 1-factors $(1,2,4,3)$ and $(2,4,3,1)$.\label{case1b-fig}}
\end{figure}
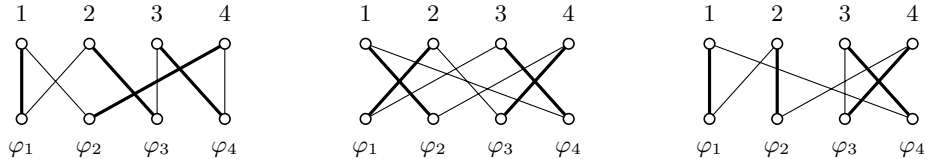

    If $\vph(x)=(2,4,1,3)$ (and $\vph(z)=(4,3,1,2)$), then repack $x$ so that $\vph_1(x) \neq 2$; denote this new packing at $x$
    by $M_x'$.  We first try to repack $y$ with its first extension.  Let $H_v^1:=H_v+M_{x\setminus z}+M_y-M_y^1
    =H_v+M_{x\setminus z}$; see the left of Figure~\ref{case1b-fig}. 
    So we are looking for a 1-factor in $H_v^1$ that does not intersect $M_x'$.  
    In $H_v^1$ we have the disjoint 1-factors $(1,4,2,3)$ and 
    $(2,1,3,4)$.  Thus, $H_v^1-M_x'$ contains a 1-factor, allowing us to extend to $v$, unless
    $M_x'\in \{(1, 2, 3, 4), (1, 3, 2, 4), (1, 4, 3, 2), (3, 1, 2, 4)$, $(4, 1, 2, 3)\}$. 
    Now instead we try repacking $y$ with its second extension.  Let $H_v^2:=H_v+M_y-M_y^2+M_{x\setminus z}$; see the center of  
    Figure~\ref{case1b-fig}.  Similar to above,
    $H_v^2$ has the 1-factors $(2,1,4,3)$ and $(3,4,2,1)$.  So if $H_v^2-M_x'$ has no 1-factor, then $M_x'\in\{(3,1,2,4),(4,1,2,3)\}$.
    Finally, let $H_v^3:=H_v+M_y-M_y^3+M_{x\setminus z}$; see the right of  Figure~\ref{case1b-fig}. 
    For both possibilities for $M_x'$, the graph $H_v^3-M_x'$ contains the 1-factor
    $(2,4,3,1)$.  Thus, we can extend $\vph$ to a $(\vec{D},\sigma)$-packing of $G$.

    If $\vph(x)=(4,3,1,2)$ (and $\vph(z)=(2,4,1,3)$), then repack $x$ so that $\vph_1(x) \neq 4$; denote this new packing at $x$ 
    by $M_x'$.  We first try to repack $y$ with its first extension.  Let $H_v^1:=H_v+M_{x\setminus z}+M_y-M_y^1
    =H_v+M_{x\setminus z}$; see the left of Figure~\ref{case1c-fig}.
    So we are looking for a 1-factor in $H_v^1$ that does not intersect $M_x'$.  In $H_v^1$ we have the 1-factors $(1,3,2,4)$ and 
    $(4,1,3,2)$.  Thus, $H_v^1-M_x'$ contains a 1-factor, allowing us to extend to $v$, unless
    $M_x'\in \{(1, 2, 3, 4), (1, 3, 4, 2), (1, 4, 3, 2), (2, 1, 3, 4),$ $(3, 1, 2, 4)\}$. 
    Now instead we try repacking $y$ with its second extension.  Let $H_v^2:=H_v+M_y-M_y^2+M_{x\setminus z}$; see the center of 
    Figure~\ref{case1c-fig}.  Similar to above,
    $H_v^2$ has the 1-factors $(3,1,4,2)$ and $(4,3,2,1)$.  So if $H_v^2-M_x'$ has no 1-factor, then $M_x'\in\{(1,3,4,2),(3,1,2,4)\}$.
    Finally, let $H_v^3:=H_v+M_y-M_y^3+M_{x\setminus z}$; see the right of Figure~\ref{case1c-fig}.  For both possibilities for $M_x'$, 
    the graph $H_v^3-M_x'$ contains the 1-factor $(4,2,3,1)$.  Thus, we can extend $\vph$ to a $(\vec{D},\sigma)$-packing of $G$.

       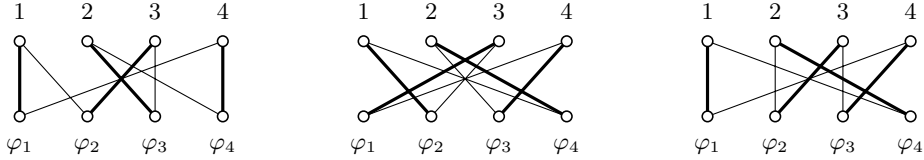
\begin{figure}[!h]
    \centering
\begin{tikzpicture}[xscale = .9, scale=1]
    \def\off{.4cm}
\tikzset{every node/.style=uStyle}

    \begin{scope}
    \foreach \i in {1, 2, 3, 4}
    {
        \draw[thick] (\i,1) node (a\i) {} (\i,0) node (b\i) {};
        \draw (a\i) ++ (0,\off) node[lStyle] {\footnotesize{$\i$}};
        \draw (b\i) ++ (0,-\off) node[lStyle] {\footnotesize{$\vph_{\i}$}};
    }

    \draw[very thick] (a1) -- (b1) (a4) -- (b4) (a2) -- (b3) (a3) -- (b2);
    \draw (b1) -- (a4) (b4) -- (a2) (b3) -- (a3) (b2) -- (a1);
    \end{scope}

    \begin{scope}[xshift=2in]
    \foreach \i in {1, 2, 3, 4}
    {
        \draw[thick] (\i,1) node (a\i) {} (\i,0) node (b\i) {};
        \draw (a\i) ++ (0,\off) node[lStyle] {\footnotesize{$\i$}};
        \draw (b\i) ++ (0,-\off) node[lStyle] {\footnotesize{$\vph_{\i}$}};
    }

    \draw[very thick] (a1) -- (b2) (a3) -- (b1) (a4) -- (b3) (a2) -- (b4);
    \draw (b2) -- (a3) (b1) -- (a4) (b3) -- (a2) (b4) -- (a1);
    \end{scope}

    \begin{scope}[xshift=4in]
    \foreach \i in {1, 2, 3, 4}
    {
        \draw[thick] (\i,1) node (a\i) {} (\i,0) node (b\i) {};
        \draw (a\i) ++ (0,\off) node[lStyle] {\footnotesize{$\i$}};
        \draw (b\i) ++ (0,-\off) node[lStyle] {\footnotesize{$\vph_{\i}$}};
    }

    \draw[very thick] (a1) -- (b1) (a4) -- (b3) (a3) -- (b2) (a2) -- (b4);
    \draw (b1) -- (a4) (b3) -- (a3) (b2) -- (a2) (b4) -- (a1);
    \end{scope}

\end{tikzpicture}
\captionsetup{width=.75\linewidth}
           \caption{Left: $H_v^1$ decomposes into the 1-factors $(1,3,2,4)$ and $(4,1,3,2)$.  Center: $H^2_v$ decomposes into the 1-factors $(3,1,4,2)$ and $(4,3,2,1)$.  Right: $H_v^3$ decomposes into the 1-factors $(1,3,4,2)$ and $(4,2,3,1)$.\label{case1c-fig}}
\end{figure}

\bigskip

    \textbf{Case 2: $\bm{\vph(z) = (3,2,4,1)}$. } By symmetry, we assume that $\vph(y) = (2,4,1,3)$ and $\vph(x) = (4,3,1,2)$.  
    Unpack $y$. By Lemma~\ref{canalwaysswapanedge}, there exists an extension of $\vph$ to $y$ where $\vph_1(y) \neq 2$;
    call this $M_y'$.  Note that $H_v+M_{y\setminus x}$ contains the 1-factors $(1,4,2,3)$ and $(2,1,3,4)$; see the left of 
    Figure~\ref{case2a-fig}.  Thus, $H_v+M_y-M_y'$ contains one of these 1-factors unless 
    $M_y'\in \{(1,2,3,4), (1,3,2,4), (1,4,3,2), (3,1,2,4), (4,1,2,3)\}.$   So we assume this is the case.

         \begin{figure}[!h]
    \centering
\begin{tikzpicture}[xscale = .9, scale=1]
    \def\off{.4cm}
\tikzset{every node/.style=uStyle}

    \begin{scope}
    \foreach \i in {1, 2, 3, 4}
    {
        \draw[thick] (\i,1) node (a\i) {} (\i,0) node (b\i) {};
        \draw (a\i) ++ (0,\off) node[lStyle] {\footnotesize{$\i$}};
        \draw (b\i) ++ (0,-\off) node[lStyle] {\footnotesize{$\vph_{\i}$}};
    }

    \draw[very thick] (b1) -- (a1) (b2) -- (a4) (b3) -- (a2) (b4) -- (a3);
        \draw (b1) -- (a2) (b2) -- (a1) (b3) -- (a3) (b4) -- (a4);
    \end{scope}

    \begin{scope}[xshift=2in]
    \foreach \i in {1, 2, 3, 4}
    {
        \draw[thick] (\i,1) node (a\i) {} (\i,0) node (b\i) {};
        \draw (a\i) ++ (0,\off) node[lStyle] {\footnotesize{$\i$}};
        \draw (b\i) ++ (0,-\off) node[lStyle] {\footnotesize{$\vph_{\i}$}};
    }

    \draw[very thick] (b1) -- (a1) (b2) -- (a3) (b3) -- (a2) (b4) -- (a4);
        \draw (b1) -- (a4) (b2) -- (a1) (b3) -- (a3) (b4) -- (a2);
    \end{scope}

\end{tikzpicture}
\captionsetup{width=.77\linewidth}
           \caption{Left: The graph $H_v+M_y$ contains the 1-factors $(1,4,2,3)$ and $(2,1,3,4)$.\\
           Right: The graph $H_v+M_x$ contains the 1-factors $(1,3,2,4)$ and $(4,1,3,2)$.
           \label{case2a-fig}}
\end{figure}
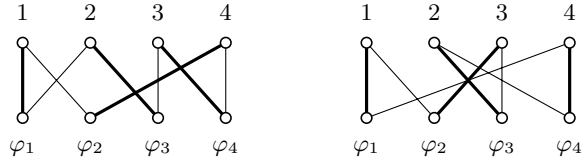

    We now consider $x$: we uncolor $x$, and let $H_x$ be the auxiliary graph for extending $\vph$ to $x$.  By 
    Lemma~\ref{canalwaysswapanedge}, since $\delta(H_x) \geq 2$, there exists an extension of $\vph$ to $x$ where 
    $\vph_1(x) \neq 4$; call it $M_x'$.  Note that $H_v+M_{x\setminus y}$ contains the 1-factors $(1,3,2,4)$ and 
    $(4,1,3,2)$; see the right of Figure~\ref{case2a-fig}.
    Now it is straightforward to check that $H_v+M_{x\setminus y}-M_x'$ contains a 1-factor, 
    allowing us to extend the packing to $G$, unless
    $M_x'\in \{(1,2,3,4), (1,3,4,2), (1,4,3,2), (2,1,3,4), (3,1,2,4)\}$. So we assume this is the case. 

    First suppose $M_y'=(1,3,2,4)$.  Since $M_y=(2,4,1,3)$, it follows 
    that $y$ also admits the packing $(1,4,2,3)$; call it $M_y''$.
    Now $H_v+M_{y\setminus x}-M_y''$ contains the 1-factor $(2,1,3,4)$; see the left of Figure~\ref{case2a-fig}.
    So we assume $M_y'\in \{(1,2,3,4), (1,4,3,2), (3,1,2,4), (4,1,2,3)\}.$

    If $M_y'=(1,2,3,4)$, then we repack $x$ with $M_x'$ and repack $y$ with $M_y'$.  Now $H_v+M_x+M_y-M_y'$ contains 
    the 1-factors $(4,3,1,2)$ and $(4,1,2,3)$; see the left of Figure~\ref{case2b-fig}.
    Thus, after removing $M_x'$, the resulting graph still contains a 1-factor.  
    So we assume $M_y'\in \{(1,4,3,2), (3,1,2,4), (4,1,2,3)\}.$

    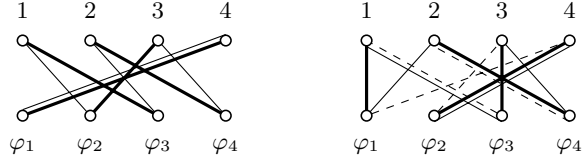
\begin{figure}[!t]
    \centering
\begin{tikzpicture}[xscale = .9, scale=1]
    \def\off{.4cm}
\tikzset{every node/.style=uStyle}

    \begin{scope}
    \foreach \i in {1, 2, 3, 4}
    {
        \draw[thick] (\i,1) node (a\i) {} (\i,0) node (b\i) {};
        \draw (a\i) ++ (0,\off) node[lStyle] {\footnotesize{$\i$}};
        \draw (b\i) ++ (0,-\off) node[lStyle] {\footnotesize{$\vph_{\i}$}};
    }

    \draw[very thick] (b1) -- (a4) (b2) -- (a3) (b3) -- (a1) (b4) -- (a2);
        \draw (b1) ++ (0,.2*\off) --++ (3,1) (b2) -- (a1) (b3) --(a2) (b4) -- (a3);
    \end{scope}

    \begin{scope}[xshift=2in]
    \foreach \i in {1, 2, 3, 4}
    {
        \draw[thick] (\i,1) node (a\i) {} (\i,0) node (b\i) {};
        \draw (a\i) ++ (0,\off) node[lStyle] {\footnotesize{$\i$}};
        \draw (b\i) ++ (0,-\off) node[lStyle] {\footnotesize{$\vph_{\i}$}};
    }

    \draw[very thick] (b1) -- (a1) (b2) -- (a4) (b3) -- (a3) (b4) -- (a2);
        \draw (b1) -- (a2) (b2)++(0,-.2*\off) --++ (2,1) (b3)++(0,-.2*\off) --++ (-2,1) (b4) -- (a3);
        \draw[dashed] (b1) -- (a4) (b2) -- (a3) (b3) -- (a1) (b4)++(0,-.2*\off) --++(-2,1);
    \end{scope}

\end{tikzpicture}
\captionsetup{width=.77\linewidth}
           \caption{Left: When $M_y'=(1,2,3,4)$, the graph $H_v+M_x+M_y-M_y'$ contains the 1-factors $(4,3,1,2)$ and $(4,1,2,3)$.
           Right: When $M_x'=(3,1,2,4)$, the graph $H_v+M_x+M_y-M_x'$ contains the 1-factors $(1,4,3,2)$ and $(2,4,1,3)$ and 
           $(4,3,1,2)$. When $M_y'=(3,1,2,4)$, the graph $H_v+M_x+M_y-M_y'$ is identical.
           \label{case2b-fig}}
\end{figure}

    Instead suppose $M_y'=(1,4,3,2)$, and let $H_v':=H_v+M_y+M_x-M_y'$; note that $H_v'$ contains the 1-factor 
    $(4,1,2,3)$.  Thus, $H_v'-M_x'$ contains a 1-factor unless $M_x'\in \{(2,1,3,4), (3,1,2,4)\}$; so we assume this is the case. 
    Recall that $M_x=(4,3,1,2)$. If $M_x'=(2,1,3,4)$, then $x$ also admits the packing $(2,3,1,4)$; 
    call this $M_x''$.  Note that $H_v'-M_x''$ contains the 1-factor $(4,1,2,3)$, which allows us to extend to $v$.
    Thus we assume $M_x'=(3,1,2,4)$. By Lemma~\ref{canalwaysswapanedge}, since $\delta(H_y) \geq 2$, there exists a 
    repacking $\vph$ of $y$ where $\vph_2(y) \neq 4$; call it $M_y''$. Note that $H_v+M_x+M_y-M_x'$ contains the 1-factors 
    $(1,4,3,2)$ and $(2,4,1,3)$ and $(4,3,1,2)$; see the right of Figure~\ref{case2b-fig}.  
    So $H_v+M_x+M_y-M_x'-M_y''$ must contain one of these 1-factors unless
    $M_y''=(4,3,1,2)$.  
    But now $M_y''=M_x$, so $H_v+M_x+M_y-M_x-M_y''$ contains the 1-factor $(2,1,3,4)$, so we are done.

    Next suppose $M_y'=(3,1,2,4)$. By Lemma~\ref{canalwaysswapanedge}, since $\delta(H_x) \geq 2$, there exists 
    a repacking of $x$ where $\vph_4(x) \neq 2$; call it $M_x''$. 
    Let $H_v':=H_v+M_y-M_y'+M_x$; again, see the right of Figure~\ref{case2b-fig}.  So we are looking for a 1-factor in $H_v'-M_x''$.  Note that $H_v'$ contains the 
    1-factors $(1,4,3,2)$ and $(2,4,1,3)$ and $(4,3,1,2)$.  So it is easy to check that $H_v+M_y-M_y'+M_x-M_x''$ contains a 
    1-factor unless $M_x''=(2,4,1,3)$.  In that case, we use the original packing $M_y$ at $y$, and again we have a 1-factor
    in $H_v+M_{x\setminus y}-M_x''$; namely $(1,3,2,4)$.

Finally, assume $M_y'=(4,1,2,3)$.  Recall $M_x'\in \{(1,2,3,4), (1,3,4,2), (1,4,3,2)$, $(2,1,3,4)$, $(3,1,2,4)\}$. 
If $M_x'\in \{(1,4,3,2),(3,1,2,4)\}$, then $H_v+M_x+M_y-M_x'-M_y'$ contains one of the 1-factors $(1,4,3,2)$ and $(2,3,1,4)$.
If $M_x'=(2,1,3,4)$, then since 
$M_x=(4,3,1,2)$ it follows that $x$ also admits the packing $(4,1,3,2)$; call it $M_x''$. 
In this case, $H_v+M_x+M_y-M_x''-M_y'$ contains the 1-factor $(2,3,1,4)$.
So we assume $M_x'\in\{(1,2,3,4),(1,3,4,2)\}$. 

          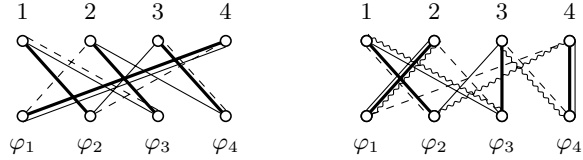
\begin{figure}[!h]
    \centering
\begin{tikzpicture}[xscale = .9, scale=1]
    \def\off{.4cm}
\tikzset{every node/.style=uStyle}

    \begin{scope}
    \foreach \i in {1, 2, 3, 4}
    {
        \draw[thick] (\i,1) node (a\i) {} (\i,0) node (b\i) {};
        \draw (a\i) ++ (0,\off) node[lStyle] {\footnotesize{$\i$}};
        \draw (b\i) ++ (0,-\off) node[lStyle] {\footnotesize{$\vph_{\i}$}};
    }

    \draw[very thick] (b1) -- (a4) (b2) -- (a1) (b3) -- (a2) (b4) -- (a3);
    \draw (b1) ++ (0,-.2*\off) --++ (3,1) (b2) -- (a3) (b3) --(a1) (b4) -- (a2);
         \draw[dashed] (b1) -- (a2) (b2) -- (a4) (b3) ++(0,.2*\off)--++(-2,1) (b4) ++(0,.2*\off) --++ (-1,1);
    \end{scope}

    \begin{scope}[xshift=2in]
    \foreach \i in {1, 2, 3, 4}
    {
        \draw[thick] (\i,1) node (a\i) {} (\i,0) node (b\i) {};
        \draw (a\i) ++ (0,\off) node[lStyle] {\footnotesize{$\i$}};
        \draw (b\i) ++ (0,-\off) node[lStyle] {\footnotesize{$\vph_{\i}$}};
    }

    \draw[very thick] (b1) -- (a2) (b2) -- (a1) (b3) -- (a3) (b4) -- (a4);
        \draw (b1) ++(0,.2*\off) --++(1,1) (b2) -- (a3) (b3) -- (a1) (b4) ++ (.2*\off,0) --++ (0,1);
        \draw[dashed] (b1) -- (a4) (b2)++(0,-.2*\off) --++(-1,1) (b3) -- (a2) (b4) -- (a3);
          \draw[-, decorate, decoration={snake, segment length=3pt, amplitude=.5pt}] 
        (b1) ++  (.2*\off,0) --++ (1,1) (b2) -- (a4) (b3) ++  (0,.15*\off) --++ (-2,1) (b4) ++ (-.2*\off,0) --++ (-1,1);
    \foreach \i in {a1, a2, a3, a4, b1, b2, b3, b4}
    \draw (\i) node {};

    \end{scope}

\end{tikzpicture}
\captionsetup{width=.8\linewidth}
              \caption{Left: When $M_x'=(1,2,3,4)$, the graph $H_v+M_x+M_y-M_x'$ contains the 1-factors $(2,4,1,3)$ and $(4,1,2,3)$ 
              and $(4,3,1,2)$.
           Right: When $M_x'=(1,3,4,2)$, the graph $H_v+M_x+M_y-M_x'$ contains 1-factors $(2,1,3,4)$ and $(2,3,1,4)$ and $(2,4,1,3)$ and 
           $(4,1,2,3)$.
           \label{case2c-fig}}
\end{figure}

Suppose $M_x' = (1,2,3,4)$.
Now $H_v+M_x+M_y-M_x'$ contains the 1-factors $(2,4,1,3)$ and $(4,1,2,3)$ and $(4,3,1,2)$; see the left of Figure~\ref{case2c-fig}.
Since $\delta(H_y)\ge 2$, we can repack $y$ so that $\vph_4(y) \neq 3$; call this $M_y''$.
Thus, $H_v+M_x+M_y-M_x'-M_y''$ contains a 1-factor unless $M_y''=(4,3,1,2)$.
In this case, $H_v+M_{y\setminus x}-M_y''$ contains the 1-factor $(1,4,2,3)$.
Now instead assume $M_x' = (1,3,4,2)$. Again, we consider the repacking $M_y''$ at $y$ where $\vph_4(y) \neq 3$. 
The graph $H_v + M_x - M_x'+ M_y$ contains the 1-factors $(2,1,3,4)$ and $(2,3,1,4)$ and $(4,1,2,3)$; see the right of Figure~\ref{case2c-fig}.
Thus, after removing $M_y''$ this graph still contains a 1-factor unless $M_y'' = (2,1,3,4)$. 
In this case, $H_v + M_{y\setminus x} - M_y''$ contains the 1-factor (1,4,2,3).
\end{proof}

[Note added in proof: The hypothesis of being triangle-free was 
removed in~\cite[Theorem 4.1]{CCvBZ}, using that paper's Lemma~3.1, which handles the case 
of 3-vertices inducing a $P_3$ or $C_3$.  The proof of that Lemma~3.1 is brute force, handled by a computer.  
Similarly, we can use the $C_3$ case of that Lemma~3.1 to remove the triangle-free hypothesis of Theorem~\ref{thm:upperboundgirth5}.]

\section{Planar Graphs with Lists of Size 8}
\label{P3-sec}
In this section we prove that $\chisc(G)\le 8$ for every planar graph $G$.  We will need a number of lemmas on 
matchings in $(8,3)$-bigraphs and $(8,4)$-bigraphs.  To focus on the proof of our main result, we defer most of these lemmas
to the next section.  However, we do begin this section with a key definition and an easy proposition regarding that definition.

\begin{defn}
\label{type-defn}
    In an $(8,3)$-bigraph $G$, an \emph{obstruction} to a 1-factor is a set $X\subseteq A$ with $|N(X)|<|X|$.  We will focus 
    specifically on 4 types of obstructions.  An obstruction $X\subseteq A$ 
    \begin{itemize}
        \item has \emph{type 1} if $|X|=5$ and $|N(X)|=3$; 
        \item has \emph{type 2} if $|X|=4$ and $|N(X)|=3$, but there exists $x_1\in A\setminus X$ and edge $e_1$ 
            incident with $x_1$ such that $|N_G(X\cup\{x_1\})|=4$ but $|N_{G-\{e_1\}}(X\cup \{x_1\})|=3$; 
        \item has \emph{type 3} if $|X|=4$ and $|N(X)|=3$, but there exists $x_1\in A\setminus X$ and edges $e_1,e_2$ 
            incident with $x_1$ such that $|N_G(X\cup\{x_1\})|=5$ but $N_{G-\{e_1,e_2\}}(X\cup \{x_1\})|=3$; 
        \item and has \emph{type 4} if $|X|=4$ and $|N(X)|=3$ but $X$ is not a subset of any obstruction of an earlier type.
    \end{itemize}
\end{defn}

\begin{figure}[!h]
    \centering
    \begin{tikzpicture}[semithick, scale=.95, yscale=1.2, scale=.8]
\tikzset{every node/.style=uStyle}

\begin{scope}
    \foreach \i in {1,...,8}
        \draw[ultra thick] (\i,1) node (x\i) {} (\i,-.07) node (y\i) {};
    \draw (x1) node[ugStyle] {} (x2) node[ugStyle] {} (x3) node[ugStyle] {} (x4) node[ugStyle] {} (x5) node[ugStyle] {};

    \foreach \i in {1,2,3,4,5}
    \foreach \j in {1,2,3}
        \draw (x\i) -- (y\j);

    \foreach \i in {6,7,8}
    \foreach \j in {4,5,6,7,8}
        \draw (x\i) -- (y\j);
\end{scope}

\begin{scope}[xshift = 3.75in]
    \foreach \i in {1,...,8}
        \draw[ultra thick] (\i,1) node (x\i) {} (\i,-.07) node (y\i) {};
    \draw (x1) node[ugStyle] {} (x2) node[ugStyle] {} (x3) node[ugStyle] {} (x4) node[ugStyle] {} (x5) node[ubStyle] {};

    \foreach \i in {1,2,3,4}
    \foreach \j in {1,2,3}
        \draw (x\i) -- (y\j);
    \draw (x5) -- (y1) (x5) -- (y2) (x5) -- (y4);

    \foreach \i in {6,7,8}
    \foreach \j in {5,6,7,8}
        \draw (x\i) -- (y\j);
    \draw (y4) -- (x6) (y4) -- (x8);
\end{scope}

\begin{scope}[xshift = 3.75in, yshift = -1.0in]
    \foreach \i in {1,...,8}
        \draw[ultra thick] (\i,1) node (x\i) {} (\i,-.07) node (y\i) {};
    \draw (x1) node[ugStyle] {} (x2) node[ugStyle] {} (x3) node[ugStyle] {} (x4) node[ugStyle] {} (x5) node[ubStyle] {};

    \foreach \i in {1,2,3,4}
    \foreach \j in {1,2,3}
        \draw (x\i) -- (y\j);
    \draw (x5) -- (y1) (x5) -- (y4) (x5) -- (y5);

    \foreach \i in {6,7,8}
   \foreach \j in {4,5,6,7,8}
        \draw (x\i) -- (y\j);
\end{scope}

\begin{scope}[yshift = -1.0in]
    \foreach \i in {1,...,8}
        \draw[ultra thick] (\i,1) node (x\i) {} (\i,-.07) node (y\i) {};
    \draw (x1) node[ugStyle] {} (x2) node[ugStyle] {} (x3) node[ugStyle] {} (x4) node[ugStyle] {};

    \foreach \i in {1,2,3,4}
    \foreach \j in {1,2,3}
        \draw (x\i) -- (y\j);

    \foreach \i in {5,6,7,8}
    \foreach \j in {4,5,6,7,8}
        \draw (x\i) -- (y\j);
\end{scope}

\end{tikzpicture}
    \captionsetup{width=.7\textwidth}
    \caption{Clockwise from top left: Example obstructions having types 1, 2, 3, and 4.\label{type-fig}
    Vertices in $X$ are gray and vertex $x_1$, if it exists, is black.}
\end{figure}
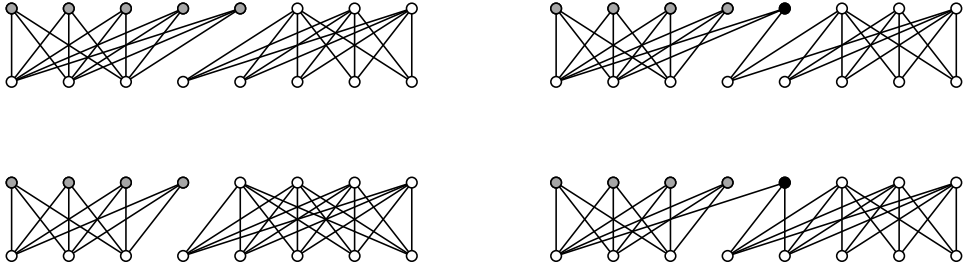

It is straightforward to prove the following.  If $H$ is an $(8,3)$-bigraph with an obstruction $X$ having type 1, 2, 3, or 4, then 
$H$ contains exactly one such obstruction; furthermore, if $X$ has type 2, then $x_1$ and $e_1$ are uniquely defined, and if $X$ has 
type 3, then $x_1$, $e_1$, $e_2$ are all uniquely defined.  But this is not needed for our main result, and its proof 
is a bit tedious, so we omit the details.

For an $(s,t)$-bigraph $H$, let \emph{$\swap(H)$} denote the same bigraph $H$, but with the names of the parts $A$ and $B$ swapped.

\begin{prop}
    \label{type-prop}
    If $H$ is an $(8,3)$-bigraph with no 1-factor, then either (a) $H$ contains an obstruction having type 1 or 2 
    and $\swap(H)$ also contains an obstruction of the same type 
    or (b) at least one of $H$ and $\swap(H)$ contains an obstruction having type 3 or 4.
\end{prop}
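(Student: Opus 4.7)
The plan is to use Proposition~\ref{easy-prop} together with the duality provided by $\swap$. Proposition~\ref{easy-prop} forces every obstruction $X\subseteq A$ of $H$ to satisfy $|X|\in\{4,5\}$; combined with $\delta(H)\ge 3$, the only possible pairs $(|X|,|N_H(X)|)$ are $(4,3)$, $(5,3)$, and $(5,4)$. Throughout, I will study $Y:=B\setminus N_H(X)$, which sits on the ``$A$-side'' of $\swap(H)$.

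The first key fact is that type $1$ swaps to type $1$: if $|X|=5$ and $|N_H(X)|=3$, then $|Y|=5$, and since no edge of $H$ joins $X$ and $Y$, we get $N_{\swap(H)}(Y)\subseteq A\setminus X$, a set of size $3$. Thus $Y$ is a type-$1$ obstruction in $\swap(H)$.

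The main (and harder) fact is that type $2$ swaps to type $2$ whenever no type-$1$ obstruction is present. Suppose $X$ has type $2$ in $H$, witnessed by $x_1\in A\setminus X$ with unique neighbor $y_1\in N_H(x_1)\setminus N_H(X)$ and edge $e_1=x_1y_1$. Then $|Y|=5$ and $N_{\swap(H)}(Y)\subseteq A\setminus X$; if this has size $3$ then $Y$ is a type-$1$ obstruction in $\swap(H)$, contradicting our assumption, so $|N_{\swap(H)}(Y)|=4$. Since $x_1$'s only neighbor in $Y$ is $y_1$, the set $Y\setminus\{y_1\}$ has size $4$ and satisfies $N_{\swap(H)}(Y\setminus\{y_1\})\subseteq A\setminus(X\cup\{x_1\})$, a set of size $3$. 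Taking the auxiliary vertex to be $y_1$ together with the edge $x_1y_1$ then witnesses that $Y\setminus\{y_1\}$ has type $2$ in $\swap(H)$: adding $y_1$ back produces $Y$, whose neighborhood has size $4$, and deleting the edge $x_1 y_1$ removes the extra neighbor $x_1$, bringing the neighborhood back down to $3$.

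With these two facts in hand, the proposition follows by a short case analysis applied to any obstruction $X$ of $H$ (which exists since $H$ has no $1$-factor). If $X$ has type $1$, the first fact gives (a) with matching type $1$. If $|X|=5$ and $|N_H(X)|=4$, then $Y$ is a size-$4$ obstruction in $\swap(H)$ of type $2$, $3$, or $4$; types $3$ and $4$ yield (b), while type $2$ in $\swap(H)$ feeds back through the second fact (applied to $\swap(H)$, using that a type-$1$ obstruction in $\swap(H)$ would by the first fact yield type $1$ in $H$) to produce a type-$2$ obstruction in $H$, giving (a). If $|X|=4$, then $X$ itself is type $2$, $3$, or $4$, handled analogously. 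The main obstacle is the type-$2$-to-type-$2$ transfer, because the data of a type-$2$ obstruction packages together an obstruction on one side, an auxiliary vertex on the same side, and a specific edge on the opposite side, and one must identify how this entire bundle moves across $\swap$: the role of ``opposite-side witness edge'' $x_1y_1$ in $H$ becomes the role of the same edge for the ``same-side auxiliary vertex'' $y_1$ in $\swap(H)$.
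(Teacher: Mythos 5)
Your route is essentially the paper's: bound the possible obstruction sizes via Proposition~\ref{easy-prop}, pass to $Y=B\setminus N_H(X)$ viewed inside $\swap(H)$, and transfer type-1 and type-2 obstructions across the swap. Your two transfer facts are correct, and your type-2 transfer is proved in more detail than the paper's ``easy to check.'' But there is a genuine (if small) gap in the case analysis, coming from the last clause in Definition~\ref{type-defn}: a set $X$ with $|X|=4$ and $|N(X)|=3$ has type 4 only if it is \emph{not a subset of an obstruction of an earlier type}. Hence a $4$-element obstruction need not be of type 2, 3, or 4 at all: in the type-1 configuration of Figure~\ref{type-fig} (five vertices of $A$ complete to a fixed $3$-set of $B$), any four of those five vertices form an obstruction that is of none of the types 2, 3, 4, since no outside vertex contributes exactly one or exactly two new neighbors, and the set lies inside a type-1 obstruction. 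You invoke the exhaustiveness claim twice --- for $Y$ in the $(|X|,|N_H(X)|)=(5,4)$ case and for $X$ itself in the $|X|=4$ case --- and in the latter case it can genuinely fail, so the argument as written breaks there.

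The repair is short, and it is exactly what the paper's choice of $X$ with $|X|$ maximum buys: if the largest obstruction has size 4, then $H$ has no type-1 obstruction, so the subset clause is vacuous and $X$ really is of type 2, 3, or 4. Alternatively, keep your arbitrary $X$ but add the observation that a $4$-element obstruction that is not of type 2, 3, or 4 must lie inside a type-1 obstruction, in which case your first fact already places you in conclusion (a). In the $(5,4)$ case your claim about $Y$ is true but still needs a sentence: here $N_H(Y)=A\setminus X$ has size 3, and any $5$-subset of $B$ containing $Y$ must include a vertex of $N_H(X)$, which has a neighbor in $X$, so its neighborhood cannot equal the $3$-set $A\setminus X$; thus $Y$ cannot sit inside a type-1 obstruction of $\swap(H)$ and is indeed of type 2, 3, or 4. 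With these two sentences added, your proof is complete and is a mirror image of the paper's (the paper transfers a type-2 obstruction of $H$ over to $\swap(H)$ via its closing remark, whereas you transfer a type-2 obstruction of $\swap(H)$ back to $H$).
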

\begin{proof}
    By Hall's Theorem, we assume $H$ has an obstruction $X$; choose $X$ to maximize $|X|$.  
    By Proposition~\ref{easy-prop}, we know $|X|\in\{4,5\}$.
    If $|X|=5$ and $|N(X)|=3$, then $X$ has type 1.  If $|X|=4$, then by definition $X$ has type 2, 3, or 4.
    Instead suppose that $|X|=5$ and $|N(X)|=4$, but $X$ contains no obstruction having type 2.  
    Now we instead consider $\swap(G)$.  Formally, let $\tilde{X}:=B\setminus N(X)$.  Note that
    $|\tilde{X}|=4$ and $|N(\tilde{X})|=|A\setminus X|=3$.   Since $X$ contains no obstruction having type 2, it is easy to check 
    that $\tilde{X}$ does not have type 2; so $\tilde{X}$ has type 3 or type 4 in $\swap(G)$.
    Finally, it is easy to check that if $G$ has an obstruction having type 1 or 2, then so does $\swap(G)$.
\end{proof}

We need the following structural lemma of Borodin~\cite{borodin}.  For a short proof sketch, also see~\cite[Theorem~3.4]{guide}.

\begin{lem}\cite{borodin}
    \label{borodin-lem}
    If $G$ is a planar graph with $\delta(G)\ge 5$, then $G$ contains a 3-cycle $uvw$ such that $d(u)+d(v)+d(w)\le 17$.
\end{lem}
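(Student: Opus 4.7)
The plan is the standard discharging argument due to Borodin. Suppose for contradiction that $G$ is a planar graph with $\delta(G)\ge 5$, yet every 3-cycle $uvw$ satisfies $d(u)+d(v)+d(w)\ge 18$. Fix a plane embedding, and assign each vertex the initial charge $\mu(v):=d(v)-6$ and each face the initial charge $\mu(f):=2d(f)-6$. Using Euler's formula $|V|-|E|+|F|=2$ together with the handshake identities $\sum_v d(v)=\sum_f d(f)=2|E|$, the total initial charge equals exactly $-12$. Only 5-vertices carry negative charge initially (each $-1$); 3-faces carry $0$, and all other vertices and faces are nonnegative.

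Next I would apply two discharging rules. (R1) Every face of length at least 4 sends $\tfrac12$ to each of its incident 5-vertices. (R2) For each 3-face containing a 5-vertex $v$, the other two vertices collectively transfer $\tfrac12$ to $v$, split equally between those of degree at least $7$. The degree-sum hypothesis guarantees that at least one of the other two vertices on such a triangle has degree $\ge 7$ (since $5+6+6=17<18$), so (R2) is always well-defined. A 5-vertex $v$ incident to $t$ triangles therefore receives $(5-t)/2$ from (R1) and $t\cdot\tfrac12$ from (R2), ending with final charge $-1+(5-t)/2+t/2=\tfrac32\ge 0$. A face of length $d(f)\ge 4$ loses at most $d(f)/2$ in total, so its final charge is at least $(3d(f)-12)/2\ge 0$.

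The remaining and delicate step is verifying that each $\ge 7$-vertex $w$ has enough surplus $d(w)-6$ to cover its obligations under (R2). Here one splits into cases according to the configuration of triangles incident to $w$, using that whenever $w$ is the \emph{unique} $\ge 7$-endpoint on an incident triangle $vwx$ with $d(v)=5$, the hypothesis forces $d(x)\ge 18-5-d(w)$, which constrains the local structure more tightly the smaller $d(w)$ is; for $d(w)\in\{7,\ldots,11\}$ one must count carefully the triangles at $w$ contributing a full $\tfrac12$ versus those contributing only $\tfrac14$. Once all final charges are shown nonnegative, summing contradicts the total $-12$, so some 3-cycle in $G$ has degree sum at most 17.

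The main obstacle is precisely this calibration for low-surplus $\ge 7$-vertices, since the threshold 18 is tight: weakening the hypothesis to ``degree sum $\ge 17$'' already allows configurations (such as certain 5-regular planar graphs) in which a 7-vertex can be asked to donate more charge than it holds. The argument crucially uses the margin between 17 and 18 to halve the contribution of low-degree donors through the $k=2$ case of (R2), and it is this bookkeeping that must be executed with care.
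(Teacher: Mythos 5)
The paper does not actually prove this lemma: it is quoted from Borodin's 1989 paper, with a pointer to a proof sketch in the discharging guide of Cranston and West, so there is no in-paper argument to compare against. Judged on its own terms, your proposal sets up the standard framework correctly (charges $d(v)-6$ and $2d(f)-6$, total $-12$ by Euler's formula, contradiction hypothesis that every triangle has weight at least $18$), but the discharging rules you chose cannot be made to work, and the step you defer as ``delicate bookkeeping'' is exactly where the argument breaks. Your rules are far too generous to 5-vertices: each 5-vertex receives $\tfrac12$ from \emph{every} incident face and ends with $+\tfrac32$, even though it only needs $+1$, and that surplus is paid for by vertices of degree $7$ through $11$, which do not have it. Concretely, take a $7$-vertex $w$ whose seven incident faces are all triangles and whose neighbors have degrees $5,6,5,6,5,6,6$ in cyclic order: every triangle at $w$ has weight $18$ or $19$, so the configuration is fully consistent with the contradiction hypothesis, yet $w$ is the unique vertex of degree at least $7$ on six of these triangles and must pay $6\cdot\tfrac12=3$ from an initial charge of $1$. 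Worse, an $8$-vertex all of whose neighbors are 5-vertices, with triangulated link, sits only on triangles of weight exactly $18$ but pays $\tfrac12$ to each of the two 5-vertices on each of its eight incident faces, i.e.\ $8$ from a charge of $2$. Since these local configurations satisfy every constraint available to you (planarity, $\delta\ge 5$, all triangle weights $\ge 18$), no amount of case analysis can establish that vertices of degree $7$--$11$ finish nonnegative under rules (R1)--(R2); the flaw is in the rules, not in the verification.

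To repair this you must transfer much less per face: a 5-vertex needs only $1$ in total, so the natural target is an average of $\tfrac15$ per incident face (or transfers along edges from high-degree neighbors calibrated so that a $7$-vertex never gives more than $1$ in total), which is how Borodin's proof and the sketch in the cited guide proceed. The hypothesis is then used not to decide \emph{who} pays $\tfrac12$, but to guarantee that a triangular face containing a 5-vertex has a sufficiently high-degree vertex (degree $\ge 7$, and $\ge 8$ if two 5-vertices are present) whose per-recipient contribution is small enough to be covered by its surplus $d(w)-6$. As written, your proof has a genuine gap and does not establish the lemma.
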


Now we prove the main result of this section.

\begin{thm}
    If $G$ is a planar graph, then $\chisc(G)\le 8$.
\end{thm}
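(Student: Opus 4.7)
The plan is to argue by minimum counterexample: assume $(G, (\vec{D}, \sigma))$ minimizes $|V(G)|$ among pairs with $G$ planar and $(\vec{D}, \sigma)$ an $8$-cover admitting no $(\vec{D}, \sigma)$-packing. Corollary~\ref{degen-cor} (with $k = 8$) immediately gives $\delta(G) \ge 5$, since any vertex of degree at most $4$ can be deleted, packed by minimality, and reinserted via a 1-factor in its auxiliary bigraph. Next, apply Lemma~\ref{borodin-lem} to locate a triangle $uvw$ with $d(u) + d(v) + d(w) \le 17$; by relabeling I assume $d(u) \le d(v) \le d(w)$, so $d(u) = 5$ and the triple lies in $\{(5,5,5), (5,5,6), (5,5,7), (5,6,6)\}$. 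Finally, Lemma~\ref{lem:straightening} applied to the path $v$--$u$--$w$ lets me assume $\sigma(uv) = \sigma(uw) = \id$.

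By minimality, $G - u$ has a $(\vec{D}, \sigma)$-packing $\vph$, and the auxiliary bigraph $H_u$ is an $(8, 3)$-bigraph (since $d(u) = 5$). If $H_u$ has a 1-factor, the packing extends to $u$ and we are done. Otherwise, Proposition~\ref{type-prop} produces an obstruction $X$ of type $1$, $2$, $3$, or $4$ in either $H_u$ or $\swap(H_u)$. Because $\sigma(uv) = \sigma(uw) = \id$, any admissible repacking that changes $\vph_j(v)$ from $c$ to $c'$ replaces the edge $c\vph_j$ in $H_u$ by $c'\vph_j$, and similarly for $w$; so repackings at $v$ and $w$ give controlled local modifications of $H_u$, and the strategy is to use such modifications to destroy $X$.

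For each obstruction type the plan is the same. Uncolor $v$ (and in harder cases also $w$), form the auxiliary bigraphs $H_v$ and $H_w$ (each an $(8, d)$-bigraph with $d \ge 3$), and invoke the matching lemmas of Section~\ref{matching-sec} to find a repacking at $v$ (and, when needed, a compatible one at $w$) that either removes a vertex from $X$ or adds a vertex to $N(X)$ in the updated $H_u$; once $|X| \le |N(X)|$, and after checking that no new obstruction has appeared, Proposition~\ref{type-prop} together with the existence part of Hall's Theorem yields the desired 1-factor. The triangle edge $vw$, equipped with the arbitrary permutation $\sigma(vw)$, couples the choices of repackings at $v$ and $w$, so the matching lemmas I rely on must guarantee a wide enough supply of 1-factors in $H_v$ and $H_w$ containing (or avoiding) prescribed edges.

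The main obstacle is the case analysis across the four obstruction types in Proposition~\ref{type-prop} (together with their swapped counterparts): the large type-$1$ obstruction with $|X| = 5$ is the hardest since I must either shrink $X$ by two colors or enlarge $N(X)$ by two packings, and the $vw$-edge constraint can block a naive pair of swaps. Types $2$ and $3$, where the obstruction is ``near-completable'' with a single extra vertex $x_1$ and one or two extra edges, require delicate use of the matching lemmas but admit fewer rigid configurations; type $4$ is the most forgiving residual case. It is exactly to supply the necessary catalogue of edge-prescribed 1-factors in $(8, 3)$- and $(8, 4)$-bigraphs that the matching lemmas are deferred to Section~\ref{matching-sec}.
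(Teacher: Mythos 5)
Your setup is exactly the paper's: minimum counterexample, $\delta(G)\ge 5$ via Corollary~\ref{degen-cor}, Borodin's triangle $uvw$ with degree sum at most $17$, straightening the two triangle edges at the deleted $5$-vertex, and then classifying the failure of Hall's condition in the $(8,3)$-bigraph via Proposition~\ref{type-prop}. But from that point on your proposal is a plan rather than a proof, and the part you defer --- ``invoke the matching lemmas \dots to find a repacking at $v$ (and, when needed, a compatible one at $w$) that either removes a vertex from $X$ or adds a vertex to $N(X)$ \dots once $|X|\le|N(X)|$, and after checking that no new obstruction has appeared, Hall's Theorem yields the desired 1-factor'' --- is precisely where all the work lies. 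Two concrete gaps: first, you never establish \emph{why} repacking a triangle neighbor can restore edges of the obstruction at all. The key structural fact in the paper is that each $x\in X$ has $d_{H_u}(x)=3=8-d(u)$, so every neighbor of the deleted $5$-vertex forbids a \emph{distinct} edge at $x$; consequently the packing matchings $M_v$, $M_w$ at the triangle neighbors must themselves contain matchings from $X$ into $B\setminus N_{H_u}(X)$ saturating $X$ (with variants for types 2 and 3). Without this, there is no guarantee that any repacking at $v$ or $w$ touches the obstruction. Second, ``checking that no new obstruction has appeared'' is not a routine verification: repacking a neighbor replaces one matching of $H_u$ by another, and the paper needs dedicated lemmas (Lemmas~\ref{switcher-general-lem} and~\ref{switcher-double-lem}) showing that a matching of size $2$ (size $1$ does not suffice for types 1--3) from $X$ or $X\cup\{x_1\}$ into $B\setminus N_{H}(X)$ kills \emph{every} obstruction of the modified graph, together with Lemma~\ref{switcher-simple-lem} and Corollary~\ref{matching-inc-cor} to guarantee such a repacking exists.

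The hardest configuration, which you acknowledge but do not resolve, is the degree triple $(5,6,6)$ with a type 1 or type 2 obstruction. There a single repacking at one $6$-vertex is not enough: the paper unpacks both $6$-vertices, builds the $2$-regular graph $C$ from the two forbidden matchings (a $10$-cycle or $C_6+C_4$), repacks the first $6$-vertex avoiding a path of length $5$ in $C$, and then must handle a possible \emph{secondary} obstruction at the second $6$-vertex --- which again comes in four types and requires finding a 1-factor of an $(8,4)$-bigraph avoiding a prescribed $2P_3$ from $C$ and a $2K_2$ from a size-$5$ matching (Lemmas~\ref{key1factor-lem} and~\ref{key1factorB-lem}, whose proofs are themselves lengthy case analyses). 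Saying that ``the $vw$-edge constraint can block a naive pair of swaps'' correctly identifies the danger, but the resolution of that danger is the bulk of the paper's Section~\ref{P3-sec} and~\ref{matching-sec}, and it is absent from your argument. As written, the proposal is the right strategy with the decisive steps missing.
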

\begin{proof}
    Our proof is by reducibility, with the unavoidability (typically done via discharging) handled by Lemma~\ref{borodin-lem}.
    We assume the theorem is false and pick a counterexample $(G,(\vec{D},\sigma))$ that minimizes $|G|$; here $(\vec{D},\sigma)$ is an 
    8-cover.
    By Corollary~\ref{degen-cor}, we know $\delta(G)\ge 5$.  By Lemma~\ref{borodin-lem}, we know $G$ contains a 
    triangle $uvw$ such that $d(u)+d(v)+d(w)\le 17$.  We first handle the case that two vertices in $\{u,v,w\}$, 
    say $v$ and $w$, are 5-vertices; the bulk of the proof focuses on the other case: $d(u)=d(v)=6$ and $d(w)=5$.

    In each case, we start with a $(\vec{D},\sigma)$-packing $\vph$ of $G-w$, by minimality, and aim to extend $\vph$ to $w$.  
    If we cannot, then we unpack $v$, and repack $v$ to facilitate extending to $w$.  The second case is harder, 
    since we may need to unpack both $u$ and $v$, and so must consider modifying $H_w$ by adding and removing 
    \emph{two} matchings each, rather than just one.

    \textbf{Case 1: $\bm{d(v)=d(w)=5}$.}  By Lemma~\ref{lem:straightening}, we assume $\sigma(vw)=\id$.  
    If we cannot extend $\vph$ to $w$, then Proposition~\ref{type-prop} implies that auxiliary graph $H_w$ 
    (or $\swap(H_w)$; but, by symmetry, we assume it is in $H_w$) 
    has an obstruction $X_w$ having type 1, 2, 3, or 4.  By Lemma~\ref{switcher-general-lem}, it suffices to 
    repack $v$ so that the updated auxiliary graph $H'_w$ for $w$ has a
    matching of the appropriate size, 1 or 2, from $B\setminus N_{H_w}(X_w)$ to $X_w\cup\{x_1\}$ (or simply to $X_w$). 

    Note that each $x\in X_w$ has $d_{H_w}(x)=3$, because $\delta(H_w)\ge 3$ and $d_{H_w}(x)\le |N_{H_w}(X_w)|=3$ 
    by the definition of type 1, 2, 3 and 4 obstructions.  
    Because $d_{H_w}(x)=3=8-5$ for all $x\in X_w$, and $d_G(w)=5$, every neighbor of $w$ in $G$ must be 
    responsible for forbidding a distinct edge of $K_{8,8}$ incident to $x$ in $H_w$.  
    In particular, the matching $M_v$ encoding the packing at $v$ must contain a matching $\tilde{M_v}$ from $X_w$ to 
    $B_w\setminus N_{H_w}(X_w)$  that saturates $X_w$.  (This observation is central to our argument and we repeat it frequently.)
    For clarity, we remark that $M_v$ and $\tilde{M_v}$ are not contained in $H_w$, but rather in its ``bipartite complement''
    $K_{8,8}-E(H_w)$.

    Let $J$ consist of an arbitrary 2 edges in $\tilde{M_v}$.
    Now uncolor $v$ and note that $H_v$ is an $(8,4)$-bigraph.  Let $H'_v:=H_v-J$.  By Corollary~\ref{matching-inc-cor}(2), the graph 
    $H'_v$ has a 1-factor $M'_v$.  Form $\vph'$ from $\vph$ by repacking $v$ according to $M'_v$.  
    Let $H'_w$ denote the auxiliary graph to extend $\vph'$ to $w$.  
    As noted above, the edges of $J$ are forbidden in $H_w$ only by the packing at $v$.  Thus, when we repack $v$, 
    disallowing the edges of $J$ in the corresponding 1-factor $M'_v$ of $H_v$, the edges of $J$ are no longer forbidden 
    for packing $w$.  That is, $H'_w$ contains $J$.
    Thus, Lemma~\ref{switcher-general-lem} implies that $H'_w$ has a 1-factor $M'_w$.  
    Now we use $M'_w$ to extend $\vph'$ to a $(\vec{D},\sigma)$-packing of $G$.

    \textbf{Case 2: $\bm{d(u)=d(v)=6}$ and $\bm{d(w)=5}$.}
    By Lemma~\ref{lem:straightening}, we assume $\sigma(vw)=\id$ and $\sigma(uw)=\id$ (but possibly $\sigma(uv)\ne\id$).  
    Let $H_w$ be the auxiliary graph to extend $\vph$ to $w$.  Note that $H_w$ is an $(8,3)$-bigraph since $d(w)=5$.  
    If $H_w$ has a 1-factor, then we can use it to extend $\vph$ to $w$ and are done.
        Thus, by Proposition~\ref{type-prop}, and symmetry between $A$ and $B$, we assume that $H_w$ contains an obstruction 
    $X_w$ having type 1, 2, 3, or 4.

    Suppose that $X_w$ has type 4.  Now each $x\in X_w$ has $d_{H_w}(x)=3$.
    Let $M_v$ be the 1-factor (in $H_v$ if we were to unpack $v$) denoting $\vph$ restricted to $v$. So $M_v$ contains 
    a matching $\tilde{M_v}$
    from $X_w$ to $B_w\setminus N_{H_w}(X_w)$ that saturates $X_w$.  By Lemma~\ref{switcher-simple-lem}, there exists $e\in \tilde{M_v}$ 
    such that $H_v-e$ contains another 1-factor $M_v'$.  Now let $H_w':=H_w+M_v-M_v'$ and note that
    $H_w'$ is again an $(8,3)$-bigraph.  Furthermore, $H_w'$ contains $e$ from $X_w$ to $B_w\setminus N_{H_w}(X_w)$, since $e$ was
    previously forbidden by $M_v$, but is not forbidden by $M_v'$.  Thus, by 
    Lemma~\ref{switcher-general-lem}(4), we know $H_w'$ contains a 1-factor $M_w'$.  Now starting from $\vph$, we unpack $v$, extend 
    the $(\vec{D},\sigma)$-packing to $v$ by $M_v'$, and further extend it to $w$ by $M_w'$.  Thus, we assume $H_w$ has no 
    obstruction having type 4.

    Suppose instead that $X_w$ has type 3.  
    As above, let $M_v$ be the 1-factor (in $H_v$ if we were to unpack $v$) denoting $\vph$ restricted to $v$. 
    Note that each $y\in B_w\setminus N_{H_w}(X_w\cup \{x_1\})$ has 
    $d_{H_w}(y)=3$.  So $M_v$ contains a matching $\tilde{M}_v$ from $X_w\cup\{x_1\}$ to $B_w\setminus N_{H_w}(X_w\cup\{x_1\})$ 
    that saturates $B_w\setminus N_{H_w}(X_w\cup\{x_1\})$.  If no edge of $\tilde{M}_v$ is incident to $x_1$, then 
    Lemma~\ref{switcher-simple-lem} implies that $\tilde{M}_v$ contains an edge $e$ such that $H_v-e$ contains another 
    1-factor $M_v'$.  Now let $H_w':=H_w+M_v-M_v'$ and note that $H_w'$ is again an $(8,3)$-bigraph.  Furthermore, $H_w'$ 
    contains $e$ from $X_w$ to $B_w\setminus N_{H_w}(X_w)$.  And $e$ combines in $H'_w$ with some edge $e'$ incident to $x_1$ to form a matching of size 2 from $X_w\cup\{x_1\}$ to $N_{H_w}(X_w)$.  Thus, by Lemma~\ref{switcher-general-lem}(3), we know $H_w'$ 
    contains a 1-factor $M_w$, and we are done as above.  
    
    So instead assume that $\tilde{M}_v$ contains one edge $e'$ 
    incident to $x_1$ and each other edge $e\in \tilde{M}_v$ is such that $H_v-e$ has no 1-factor.  
    Recall that $d_{H_w}(x)=3$ for all $x\in X_w$, since $X_w$ has type 3.  So $d_{H_w}(x)=8-5=|L(w)|-d_G(w)$ implies that $M_v$ saturates $X_w$.
    Now we instead take $e$ to be some edge 
    of $M_v$ from $X_w$ to $N_{H_w}(x_1)\setminus N_{H_w}(X_w)$ (such an edge exists because $M_v$ saturates $X_w$), and get $M_v'$ by 
    Lemma~\ref{switcher-simple-lem}.  Again we let $H_w':=H_w+M_v-M_v'$. Clearly, $e\in H_w'$, since $e$ was forbidden 
    from $H_w$ only by $M_v$ and is not forbidden by $M'_v$.  But also, since $e'\in M_v$,
    we get that $|N_{H_w+M_v}(x_1)\setminus N_{H_w}(X)|=3$, so $|N_{H'_w}(x_1)\setminus N_{H_w}(X)|\ge 2$.  
    Thus, $e$ combines with some edge incident to $x_1$ in $H'_w$ to form a matching of size 2.
    That is, $H_w'$ contains a matching of size 2 from $X_w\cup\{x_1\}$ to $B_w\setminus N_{H_w}(X_w)$.
    Hence, by Lemma~\ref{switcher-general-lem}(3), we know $H_w'$ contains a 1-factor $M_w$, so we are done as above.

    Now suppose that $X_w$ has type 1 or type 2.  We first handle the case that $X_w$ has type 1, and later comment about 
    how to adapt the proof to the case when $X$ has type 2.  Let $M_v$ and $M_u$ be the 1-factors denoting $\vph$ restricted 
    to $v$ and $u$, respectively.  Since $X_w$ has type 1, both $M_v$ and $M_u$ contain matchings of size 5 from $X_w$ to 
    $B_w\setminus N_{H_w}(X_w)$.  Since $d_{H_w}(x) = 3$ for all $x \in X_w$ and $d_{H_w}(y) = 3$ for all $y \in B_w\setminus N_{H_w}(X_w)$, these two matchings are disjoint.
    Let $C$ denote the union of these two matchings, and note that $C$ is a 2-regular graph.  
    Either $C$ consists of a vertex disjoint 4-cycle and 6-cycle or else $C$ consists of a 10-cycle.  In each case let $\tilde{C}$ 
    consist of a path of length 5 in $C$.  Now unpack $v$ and $u$.  Note that the resulting $H_u$ is an $(8,4)$-bigraph.  
    Let $H_u':=H_u-\tilde{C}$.   By Corollary~\ref{matching-inc-cor}(2), we know that $H_u'$ has a 1-factor, which we can use to 
    extend the $(\vec{D},\sigma)$-packing to $u$; call this new $(\vec{D},\sigma)$-packing $\vph'$.  If we can extend $\vph'$ to $v$, 
    then we can further extend (this extension to $v$) to $w$. 
    To see this, note that for any matching $M'_v$, the subgraph $\tilde{C}-M'_v$ contains a matching of size 2.
    Thus, the new auxiliary graph $H'_w$, has a matching of size 2 from $X_w$ to $B_w\setminus N_{H_w}(X_w)$.
    So, by Lemma~\ref{switcher-double-lem}, with $\tE=\emptyset$ and $k=4$, we can extend this $(\vec{D},\sigma)$-packing to $w$.

    (Now we adapt to the case when $X_w$ has type 2. Note that $d_{H_w}(x)=3$ for all $x\in X_w$ and $d_{H_w}(x_1)\le 4$.
    So both $M_v$ and $M_u$ saturate $X_w$ and at least one saturates $x_1$.  
    If $M_v$ and $M_u$ each contain matchings of size 5 into $X_w\cup\{x_1\}$ analogous to the case above, 
    then we take $C$ as before; otherwise, one contains a matching of size 5 and the other a matching of size 4, 
    and to get our 2-regular 10-vertex subgraph $C$ (with vertex set $X_w\cup\{x_1\}\cup (B\setminus N_{H_w}(X_w))$) 
    we add to these matchings the edge $e_1$.  Now everything else works nearly the same, 
    but we must apply Lemma~\ref{switcher-double-lem}, to $X_w\cup\{x_1\}$, with $|\tE|=1$.) 
    So we conclude that $\vph'$ does not extend to~$v$.

    Consider the auxiliary graph $H'_v$ for extending $\vph'$ to $v$.  Since $\vph'$ does not extend, we know that $H'_v$ has
    an obstruction; call it $X_v$.  By Proposition~\ref{type-prop}, we assume 
    that $X_v$ has type 1, 2, 3, or 4.  We will now repack $u$ so that we can extend the resulting $(\vec{D},\sigma)$-packing 
    $\vph'$ to both $v$ and $w$.  To extend $\vph'$ to $v$, we will use Lemma~\ref{switcher-general-lem}, and to extend this 
    extension to $w$ we will use Lemma~\ref{switcher-double-lem}.  So what remains is to find a good 1-factor in $H_u$, after 
    we unpack $u$ and $v$, that allows these extensions.

    Suppose that $X_v$ has type 4. Since $d_{H_v}(x)=3$ for all $x\in X_v$, and $d_{G-w}(v)=5$, we know that $M_u$
    contains a matching $\tilde{M}_u$ that forbids in $H_v$ a matching between $X_v$ and $B_v\setminus N_{H_v}(X_v)$ that 
    saturates $X_v$; possibly $\tilde{M}_u$ is distinct from the matching that it forbids in $H_v$, because possibly 
    $\sigma(uv)\ne\id$.  Form $M$ from 
    $\tilde{M}_u$ by adding an arbitrary (vertex disjoint) edge $e'$ with one endpoint in each part of $H_u$.  Recall the
    definition of $C$ from above, either a 10-cycle or a 6-cycle and 4-cycle.  
    By Lemmas~\ref{key1factor-lem} and \ref{key1factorB-lem}, we know that
    $H_u$ contains a 1-factor $M'_u$ that avoids $2P_3$ from $C$ and avoids $2K_2$ from $M$.  One edge in this $2K_2$ 
    might be $e'$, but the other forbids in $H_v$ an edge $e''$ between $X_v$ and $N_{H_v}(X_v)$.  
    Now $e''\in H'_v$, so by Lemma~\ref{switcher-general-lem} we can extend $\vph'$ to $v$, via some 1-factor $M'_v$ in $H'_v$.  
    Finally, since $M'_u$ avoids a $2P_3$ from $C$, we know that $M'_u\cup M'_v$ avoid a $2K_2$ in $C$.
    Thus, we can extend (this extension to $v$) to $w$ by Lemma~\ref{switcher-double-lem}.

    Suppose instead that $X_v$ has type 3.  The argument is nearly the same, but we must choose $J$ more carefully, since we 
    now need a matching of size 2 between $X_v$ and $B_v\setminus N_{H_v}(X_v)$ after we repack $u$.  
    For 3 vertices $y \in B_v\setminus N_{H_v}(X_v)$, we have $d_{H_v}(y)=3$.
    If an edge from $x_1$ to any of these 3 vertices is forbidden in $H_v$ by some edge of $M_u$, then the argument above 
    (for type 4) still works.  In that case, after $u$ is repacked, $x_1$ will still have at least two edges into 
    $B_v\setminus N_{H_v}(X_v)$, so a single
    additional edge $e'$ from $X_v$ to $B_v\setminus N_{H_v}(X_v)$ will combine with some edge $e''$ incident to $x_1$ to give 
    the desired matching of size 2 from $X_v\cup\{x_1\}$ to $B_v\setminus N_{H_v}(X_v)$.  So we assume that no such edge 
    from $x_1$ to one of these 3 vertices is forbidden by an edge of $M_u$.
    Note that the matching $M_u$ must contain a matching $\tilde{M}_u$ that forbids in $H_v$ a matching $\widehat{M_u}$ 
    between $X_v$ and these 3 specified vertices.  Further, $M_u$ contains an edge that forbids another edge $e'$ from $X_v$
    into $B_v\setminus N_{H_v}(X_v)$ from the vertex of $X_v$ not saturated by $\widehat{M_u}$.
    Let $M:=\widehat{M_u}\cup\{e',e''\}$, where $e''$ is from $x_1$ to the final vertex of $B_v\setminus N_{H_v}(X_v)$.
    We repeat the argument in the previous paragraph with this choice of $M$.

    Suppose instead that $X_v$ has type 1 or type 2.  If $X_v$ has type 1, then $M_u$ contains a matching $\tilde{M}_u$ that forbids
    a matching in $H_v$ from
    $X_v$ to $B_v\setminus N_{H_v}(X_v)$ that saturates $X_v$.  Now we repeat the argument from above with $M:=\tilde{M}_u$.  Suppose
    instead that $X_v$ has type 2 and that $M_u$ does not contain such a matching $\tilde{M}_u$ of size 5 that forbids in $H_v$
    a matching of size 5 from $X_v\cup \{x_1\}$ to $B_v\setminus N_{H_v}(X_v)$.  In this case,
    $M_u$ does contain such a matching $\tilde{M}_u$ of size 4 that forbids in $H_v$ a matching $\widehat{M_u}$ 
    of size 4 from $X_v$ to $B_v\setminus N_{H_v}(X_v)$.  Further, $\widehat{M_u}\cup\{e_1^u\}$ is the desired matching of size 5; 
    here let $e_1$ be as in the definition of a type 2 obstruction in $H_v$, and let $e_1^u$ be the edge in $M_u$ 
    (if it exists) that would forbid $e_1$ in $M_v$.

    This completes the proof.
\end{proof}

\section{Lemmas on Matchings: (8,3)- and (8,4)-Bigraphs}
\label{matching-sec}
In this section, we prove many of the lemmas needed in the previous section for the proof of Theorem~\ref{P3-thm}.

\begin{lem}
    \label{matching-inc-lem}
    Let $H$ be a bigraph with $|A|\in\{2k,2k+1\}$, for some positive integer $k$.
    Denote the vertices in $A$ and $B$ by, respectively, $a_1,\ldots,a_{|A|}$ and $b_1,\ldots,b_{|A|}$ where $d(a_i)\le 
    d(a_{i+1})$ and $d(b_i)\le d(b_{i+1})$ for all $i\in[|A|-1]$.  Suppose $d(a_i)\ge\min\{i-1,k\}$ and 
    $d(b_i)\ge\min\{i-1,|B|-k\}$ for all $i\in[|A|]$. 
    \begin{enumerate}
        \item[(1)] Now $H$ has a 1-factor unless there exists $i'\in[k]$ such that $|N(\{a_1,\ldots,a_{i'}\})|=i'-1$ or
        there exists $i'\in[|B|-k]$ such that $|N(\{b_1,\ldots,b_{i'}\})|=i'-1$.
        \item[(2)] In particular, $H$ has a 1-factor if $d(a_i)\ge i$ and $d(b_i)\ge i$ for all $i\in[k]$.
    \end{enumerate}
\end{lem}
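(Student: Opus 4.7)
The plan is to derive part (1) from Hall's Theorem by showing that any minimum-size obstruction $X \subseteq A$ must itself be the prefix $\{a_1,\ldots,a_{|X|}\}$ when $|X| \le k$, and otherwise to transfer the analysis to $B$ via the complementary set $Y := B \setminus N(X)$; part (2) will then follow from part (1) in a line.

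For (1), I would suppose $H$ has no 1-factor and take an obstruction $X \subseteq A$ of minimum size $s$. Let $a_j$ denote the vertex of largest index in $X$. Then $j \ge s$ and, since $N(a_j) \subseteq N(X)$, we have $d(a_j) \le |N(X)| \le s-1$. In the case $s \le k$, the inequality $s-1 < k$ combined with the hypothesis $d(a_j) \ge \min\{j-1, k\}$ rules out $j \ge k+1$ and forces $j-1 \le s-1$, so $j = s$ and $X = \{a_1,\ldots,a_s\}$. Sandwiching $|N(X)| \ge d(a_s) \ge s-1$ with $|N(X)| \le s-1$ then yields $|N(X)| = s-1$ exactly, giving the $A$-side prefix condition with $i' = s$.

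In the remaining case $s \ge k+1$, the same hypothesis instead forces $d(a_j) \ge k$, so $|N(X)| \ge k$. Setting $Y := B \setminus N(X)$, I get $|Y| \le |B|-k$ and $N(Y) \subseteq A \setminus X$; a quick count gives $|Y|-|N(Y)| \ge |X|-|N(X)| \ge 1$, so $Y$ is an obstruction in $\swap(H)$. Rerunning the small-size argument with $A$ and $B$ swapped and with $|B|-k$ playing the role of $k$ (justified by the hypothesis $d(b_i) \ge \min\{i-1, |B|-k\}$ together with $|Y| \le |B|-k$) forces $Y = \{b_1,\ldots,b_{|Y|}\}$ and $|N(Y)| = |Y|-1$, yielding the $B$-side prefix condition with $i' = |Y| \in [|B|-k]$.

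For (2), either prefix condition from (1) immediately contradicts the strengthened hypothesis: if $|N(\{a_1,\ldots,a_{i'}\})| = i'-1$, then $d(a_{i'}) \le i'-1$ contradicts $d(a_{i'}) \ge i'$, and the analogous contradiction applies on the $B$-side. The one piece of bookkeeping to watch is the boundary $s = k+1$, where the asymmetric form of the hypothesis on $B$ (stated with $|B|-k$ rather than just $k$) is precisely what lets the prefix-forcing argument be rerun inside $\swap(H)$.
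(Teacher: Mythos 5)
Your proof of part (1) is correct and takes essentially the same route as the paper: both arguments verify Hall's condition by splitting on whether the deficient set has size at most $k$ or at least $k+1$, and in each case force the failure onto an initial segment of the degree ordering. Your passage to $Y=B\setminus N(X)$ as an obstruction in $\swap(H)$ is just a repackaging of the paper's pigeonhole count showing that $b_j\in N(X)$ for all $j\ge |A|-|X|+2$; also, the minimality of $X$ is never actually used, but that is harmless.

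One shared caveat concerns (2): your closing claim that ``the analogous contradiction applies on the $B$-side'' needs $d(b_{i'})\ge i'$ for $i'\in[|B|-k]$, whereas the hypothesis of (2) supplies this only for $i'\in[k]$. When $|A|=2k+1$ the exceptional index $i'=k+1$ is not covered, and in fact (2) as literally stated fails there: the disjoint union $K_{k,k+1}+K_{k+1,k}$ (with the part of size $k+1$ of the first component placed in $B$) satisfies the preamble and the hypothesis of (2) but has no 1-factor. The paper's own derivation (``(2) follows immediately from (1)'') glosses exactly the same point, and in every application in the paper $|A|=2k$, where $|B|-k=k$ and your argument is complete; so this is an imprecision inherited from the statement (or its intended even-case reading) rather than a defect of your proof relative to the paper's.
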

\begin{proof}
    Note that (2) follows immediately from (1), so we prove (1).
    By Hall's Theorem, it suffices to show that $|N(X)|\ge |X|$ for all $X\subseteq A$ 
    unless there exists some exceptional set $\{a_1,\ldots,a_{i'}\}$ or $\{b_1,\ldots,b_{i'}\}$ as in (1).
    Let $i:=|X|$.  First suppose that $i\le k$.
    If $X\ne\{a_1,\ldots,a_i\}$, then $X$ contains $a_{\ell}$ for some $\ell>i$, so $|N(X)|\ge d(a_{\ell})\ge \min\{\ell-1,k\}\ge i 
    = |X|$, and we are done.  If instead $X=\{a_1,\ldots,a_i\}$, then either $|N(X)|=i-1$ or $|N(X)|\ge i$; but in each case we are
    done.  So suppose instead that $i\ge k+1$.  For all $j\ge |A|-i+2$, by Pigeonhole we have 
    $|N(b_j)\cap X|\ge |X|+((|A|-i+2)-1) - |A| = 1$.  Thus, $b_j\in N(X)$.  
    If there exists $b_{j'}$ such that $j'\le |A|-i+1$ and $b_{j'}\in N(X)$, 
    then $|N(X)| \ge |B| - ((|B|-i+2)-1)+1 =|B|-|B|+i-2+1+1 = i = |X|$.  
    Otherwise, $|N(\{b_1,\ldots,b_{|A|-i+1}\})|=|A|-i$; and again we are done.
\end{proof}

\begin{cor}
    \label{matching-inc-cor}
    Let $H$ be a bigraph where $|A|=|B|=8$ and $a_1,\ldots,a_8$ and $b_1,\ldots,b_8$ are the 
    vertices in parts $A$ and $B$ with $d(a_i)\le d(a_{i+1})$ and $d(b_i)\le d(b_{i+1})$ for all $i\in[7]$.
    \begin{enumerate}
        \item[(1)] Suppose $d(a_1)\ge 1$, $d(a_2)\ge 2$, $d(a_3)\ge 3$, $d(a_4)\ge 3$, $d(a_i)\ge 4$ for all $i\in\{5,6,7,8\}$ and
            $d(b_1)\ge 1$, $d(b_2)\ge 2$, $d(b_3)\ge 3$, $d(b_4)\ge 3$, $d(b_j)\ge 4$ for all $j\in\{5,6,7,8\}$.  
            Now $H$ has a 1-factor unless $|N(\{a_1,\ldots,a_4\})|=3$ or $|N(\{b_1,\ldots,b_4\})|=3$.
        \item[(2)] In particular, $H$ has a 1-factor if $d(a_i)\ge i$ and $d(b_i)\ge i$ for all $i\in[4]$ and $d(a_i)\ge 4$ 
            and $d(b_i)\ge 4$ for all $i\in\{5,\ldots,8\}$.
    \end{enumerate}
    \begin{proof}
        This follows immediately from part (1) of the previous lemma, with $k=4$.
    \end{proof}
\end{cor}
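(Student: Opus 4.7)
The plan is to invoke Lemma~\ref{matching-inc-lem} directly with $k=4$ and $|A|=|B|=2k=8$, then rule out the small exceptional sets using the explicit degree bounds in the hypothesis. First I would check that the degree hypotheses of Lemma~\ref{matching-inc-lem} are satisfied: for each $i\in[8]$ we need $d(a_i)\ge\min\{i-1,4\}$ and $d(b_i)\ge\min\{i-1,|B|-k\}=\min\{i-1,4\}$. The bounds given in the Corollary are $d(a_1)\ge 1$, $d(a_2)\ge 2$, $d(a_3)\ge 3$, $d(a_4)\ge 3$, and $d(a_i)\ge 4$ for $i\in\{5,6,7,8\}$, which are each at least $\min\{i-1,4\}$; the same holds on the $B$-side.

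Applying Lemma~\ref{matching-inc-lem}(1) then gives that $H$ has a 1-factor unless some prefix $\{a_1,\ldots,a_{i'}\}$ with $i'\in[4]$ satisfies $|N(\{a_1,\ldots,a_{i'}\})|=i'-1$, or the analogous condition holds for some prefix $\{b_1,\ldots,b_{i'}\}$. Next I would rule out $i'\in\{1,2,3\}$ on the $A$-side by a trivial case check: if $i'=1$ then $|N(\{a_1\})|\ge d(a_1)\ge 1>0$; if $i'=2$ then $|N(\{a_1,a_2\})|\ge d(a_2)\ge 2>1$; and if $i'=3$ then $|N(\{a_1,a_2,a_3\})|\ge d(a_3)\ge 3>2$. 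So the only surviving exceptional set on the $A$-side is $\{a_1,\ldots,a_4\}$ with $|N|=3$; by symmetry the same on the $B$-side. This yields part~(1).

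Part~(2) follows immediately: under the stronger hypothesis $d(a_4)\ge 4$ we have $|N(\{a_1,\ldots,a_4\})|\ge d(a_4)\ge 4>3$, and similarly for the $B$-side, so neither exceptional set can occur and $H$ must contain a 1-factor.

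There is essentially no obstacle here; the whole point of Lemma~\ref{matching-inc-lem} was to package this deduction, and the only real work is the short case check that the ``small'' exceptional prefixes are excluded by the explicit degree bounds, leaving only the single threshold $i'=4$ mentioned in the statement.
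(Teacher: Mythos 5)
Your proposal is correct and follows essentially the same route as the paper: the paper simply cites Lemma~\ref{matching-inc-lem}(1) with $k=4$, and your write-up supplies the short verification (degree hypotheses hold, and the stronger bounds $d(a_2)\ge 2$, $d(a_3)\ge 3$ rule out the exceptional prefixes with $i'\le 3$, leaving only $|N(\{a_1,\ldots,a_4\})|=3$ or its $B$-side analogue) that the paper leaves implicit.
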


For what follows, it may be helpful to recall Definition~\ref{type-defn} and the examples in Figure~\ref{type-fig}.
\begin{lem}
    \label{switcher-general-lem}
    Let $H$ be an $(8,3)$-bigraph with an obstruction X having type $i$, for some $i\in[4]$.  Let $H'$ be another $(8,3)$-bigraph, 
    where $H':=H+M_1-M_2$ and $M_1$ and $M_2$ are matchings.  Now $H'$ has a 1-factor if any of the following 4 conditions hold:
    \begin{enumerate}
        \item[(1)] $X$ has type 1 and $H'$ contains a matching of size 2 from $X$ to $B\setminus N_H(X)$; or
        \item[(2)] $X$ has type 2 and $H'$ contains a matching of size 2 from $X\cup\{x_1\}$ to $B\setminus N_H(X)$; or
        \item[(3)] $X$ has type 3 and $H'$ contains a matching of size 2 from $X\cup\{x_1\}$ to $B\setminus N_H(X)$; or
        \item[(4)] $X$ has type 4 and $H'$ contains an edge from $X$ to $B\setminus N_H(X)$.
    \end{enumerate}
\end{lem}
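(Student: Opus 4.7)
The plan is to argue by contradiction in all four cases: assume $H'$ has no 1-factor, invoke Hall's Theorem to produce an obstruction $X' \subseteq A(H')$, and use Proposition~\ref{easy-prop} to conclude $|X'| \in \{4,5\}$. The objective is then, in each of the four hypotheses, to derive a contradiction.

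First I would establish a rigidity property for the original type-$i$ obstruction $X$ in $H$. Since $\delta(H) \ge 3$ and $|N_H(X)| = 3$, double-counting the edges of $H[X, N_H(X)]$ gives
\[
    3|X| \;\le\; |E(H[X,N_H(X)])| \;\le\; |X|\cdot|N_H(X)| \;=\; 3|X|,
\]
so equality holds throughout: $X$ is complete to $N_H(X)$, every $y \in N_H(X)$ has exactly $|X|$ neighbors in $X$, and every $x \in X$ has all of its neighbors in $N_H(X)$. Since $M_1$ and $M_2$ are matchings, each vertex's incidence is altered by at most one edge in passing from $H$ to $H'$, so every $y \in N_H(X)$ retains at least $|X| - 1 \ge 3$ neighbors in $X$, whence $N_H(X) \subseteq N_{H'}(X)$. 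This rigidity is the pivot for what follows.

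Next I would dispose of the case $X' = X$ (in types 1 and 4) or $X' = X \cup \{x_1\}$ (in types 2 and 3). In each subcase the prescribed matching supplies the requisite number of additional vertices of $B \setminus N_H(X)$ to $N_{H'}(X')$; combined with the rigidity step this gives $|N_{H'}(X')| \ge |X'|$, contradicting that $X'$ is an obstruction. For instance, in type 1 the size-$2$ matching contributes two distinct new neighbors, so $|N_{H'}(X)| \ge 3 + 2 = 5 = |X|$; in type 2 it contributes two distinct neighbors to $N_{H'}(X \cup \{x_1\})$, again giving a count of $\ge 5 = |X \cup \{x_1\}|$.

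The remaining and hardest task is to rule out any obstruction $X' \ne X$ (respectively $\ne X \cup \{x_1\}$). For this I would use submodularity of the neighborhood function, $|N_H(X \cup X')| + |N_H(X \cap X')| \le |N_H(X)| + |N_H(X')|$, together with Proposition~\ref{easy-prop} (no obstruction in $H$ has size $\ge 6$) and $\delta(H) \ge 3$, to pin any candidate $X'$ of size $4$ or $5$ to within a bounded symmetric difference of $X$ or $X \cup \{x_1\}$. The defining witnesses of the type ($x_1, e_1, e_2$) together with the edges in $H' \setminus H$ should then resolve the short list of remaining subcases by direct Hall-condition verification. The main obstacle here is the bookkeeping: the two matchings $M_1$ and $M_2$ can simultaneously shift neighborhoods in and out of candidate sets, and one must argue systematically that once the type-$i$ classification (and the uniqueness noted after Definition~\ref{type-defn}) is taken into account, no alternative $X'$ close to $X$ can survive as an obstruction in $H'$.
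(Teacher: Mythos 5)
Your opening matches the paper's: Hall's Theorem plus Proposition~\ref{easy-prop} to force $|X'|\in\{4,5\}$, the double-count showing $X$ is complete to $N_H(X)$ with every $x\in X$ having degree exactly $3$, and the observation that deleting a matching removes at most one edge per vertex, so $N_H(X)$ survives into $N_{H'}(X')$ whenever $|X'\cap X|\ge 2$. Your treatment of $X'=X$ (resp.\ $X'=X\cup\{x_1\}$) via the hypothesized matching is correct and is essentially the paper's second step (the paper handles, in the same breath, all $X'$ contained in $X$ or $X\cup\{x_1\}$, e.g.\ a $4$-subset of a type-1 set, which your step as phrased omits).

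The genuine gap is that the case you defer --- an obstruction $X'$ meeting both $X$ and $A\setminus X$ --- is the heart of the lemma, and the route you sketch for it would not go through as stated. The paper does not use submodularity or the uniqueness remark after Definition~\ref{type-defn} (which the authors explicitly leave unproved and unused); it uses a second, ``other-side'' rigidity that your proposal never extracts: since each $x\in X$ has all three of its neighbors in $N_H(X)$, every $y\in B\setminus N_H(X)$ has all of its at least three neighbors in $A\setminus X$, and then a short count (at least $15$ edges from $B\setminus N_H(X)$ into the four vertices of $A\setminus X$), together with the type classification itself (for type 4, ``not contained in an obstruction of an earlier type'' is exactly what forbids a vertex of $A\setminus X$ with at most two neighbors in $B\setminus N_H(X)$; for types 2 and 3 the exceptional vertex can only be $x_1$), shows every relevant vertex of $A\setminus X$ keeps at least two neighbors in $B\setminus N_H(X)$ after removing $M_2$. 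With that, a mixed $X'$ of size $4$ has $|N_{H'}(X')|\ge 2+2$, and for size $5$ either two vertices of $X'\cap X$ already see all of $N_H(X)$ or three vertices of $X'\setminus X$ see all five vertices of $B\setminus N_H(X)$. By contrast, your submodularity inequality is stated inside $H$ while $X'$ is only an obstruction of $H'$; translating $|N_{H'}(X')|<|X'|$ back to $H$ costs error terms from both $M_1$ and $M_2$, after which the inequality does not pin $X'$ to a small symmetric difference with $X$ (e.g.\ $|X'\cap X|=2$ is not excluded by any such count), and relying on the uniqueness claim would rest the proof on a statement the paper deliberately omits. So the proposal is sound on the easy cases but is missing the key structural facts about $A\setminus X$ versus $B\setminus N_H(X)$ that actually finish the argument.
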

\begin{proof}
    The proofs for all types are similar, so we present them together.
    By Hall's Theorem, it suffices to show that $|N_{H'}(X')|\ge |X'|$ for all $X'\subseteq A$.  By Proposition~\ref{easy-prop},
    we may assume $|X'|\in\{4,5\}$.  For each type, if $X'\subseteq A\setminus X$, then $|X'|=4$ and $|N_{H'}(X')|=
    |N_H(A\setminus X)|=5 > |X'|$.  So we assume that $X'\cap X\ne\emptyset$.

    Suppose $X'\subseteq X$ (if $X$ has type 1 or 4) or $X'\subseteq X\cup\{x_1\}$ (if $X$ has type 2 or 3).  Now $|X'\cap X|\ge 3$
    so $|N_{H'}(X')\cap N_H(X)|=|N_H(X)|=3$.  And $H'$ contains a matching from $X'$ to $B\setminus N_H(X)$ of size $|X'|-3$.
    Thus, $|N_{H'}(X')|\ge |N_H(X)|+(|X'|-3) = |X'|$, as desired.  So there must exist $x_2,x_3\in X'$ with $x_2\in X$ and 
    $x_3\in A\setminus X$.  In fact, if $X$ has type 2 or 3, then we can assume $x_3\in A\setminus (X\cup\{x_1\})$.
    Thus $|N_{H'}(X')|\ge |N_{H'}(x_2)\cap N_H(X)|+|N_{H'}(x_3)\cap (B\setminus N_H(X))|\ge 2+2$.  If $|X'|=4$, then we are done.
    Now assume $|X'|=5$.  If there exist $x_2',x_2''\in X'\cap X$, then $|N_{H'}(\{x_2',x_2''\})\cap N_H(X)|=3$ and we are done 
    (since $H[\{x_2',x_2''\}\cup N_H(\{x_2',x_2''\})]=K_{2,3}$).
    So instead we must have $x_3',x_3'',x_3'''\in X'\setminus X$.  But now $|N_{H'}(\{x_3',x_3'',x_3'''\})|=|B\setminus N_H(X)|=5$, and again we 
    are done.
\end{proof}

\begin{lem}
    \label{switcher-simple-lem}
    Let $H$ be an $(8,3)$-bigraph.  If $M$ is a 1-factor in $H$, then there exists $M'\subseteq M$ with $|M'|\ge 6$
    such that for each $e\in M'$, the graph $H-e$ contains a 1-factor.
\end{lem}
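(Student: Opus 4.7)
The plan is to show that at most two edges of $M$ are \emph{critical}, where $e\in M$ is critical if $H-e$ has no 1-factor; letting $M'$ be the set of non-critical edges of $M$ then gives $|M'|\ge 6$.

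I would first characterize critical edges via Hall's Theorem: $e=ab\in M$ is critical if and only if there is a \emph{tight} set $X\subseteq A$ (meaning $|N_H(X)|=|X|$) with $a\in X$ such that $a$ is the unique $X$-neighbor of $b$ in $H$; indeed, any obstruction in $H-e$ must contain $a$ and $H$'s having a 1-factor forces $|N_H(X)|-|N_{H-e}(X)|=1$. Combining Proposition~\ref{easy-prop} with $\delta(H)\ge 3$ (which rules out tight proper subsets of sizes $6$ and $7$, since these would force a vertex of $A\setminus X$ to have degree less than $3$), every tight proper subset of $A$ has size in $\{3,4,5\}$; size-$3$ tight sets induce $K_{3,3}$ on $H[X,N(X)]$ and admit no critical edge. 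For tight $X$ of size $k\in\{4,5\}$, I would bound the number $c_X$ of critical edges witnessed by $X$ via a double count on $H[X,N(X)]$: the $X$-side sum is at least $3k$ while the $N(X)$-side sum is at most $c_X+(k-c_X)k$, yielding $c_X(k-1)\le k(k-3)$, and hence $c_X\le 1$ for $k=4$ and $c_X\le 2$ for $k=5$.

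The final step uses submodularity of $|N(\cdot)|$: unions and nonempty intersections of tight sets are tight, so the maximal tight proper subsets of $A$ are pairwise disjoint. Let $U$ denote the union of all tight proper subsets; then $|U|\in\{0,3,4,5,8\}$, with $|U|=8$ arising only when the maximal tight sets partition $A$ into two parts of sizes $(3,5)$ or $(4,4)$. Every critical edge lies in $U\times N(U)$, so the cases $|U|\le 4$ (giving $|C|\le c_U\le 1$) and $|U|=8$ (giving $|C|\le\max\{0+2,1+1\}=2$) follow directly from the $c_X$ bound. The main obstacle is the case $|U|=5$: if $X_1\subsetneq U$ is a tight size-$4$ subset, then $U\setminus X_1=\{u\}$ and tightness of $X_1$ forces some $v\in N(U)$ to have $\{u\}$ as its unique $U$-neighborhood, so $uv$ is critical via $U$. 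A critical edge $ab$ witnessed by $X_1$ fails to be witnessed by $U$ precisely when $b\sim u$; a short structural analysis in this subcase shows that $X_1$ is the unique size-$4$ tight subset of $U$ and $c_U=1$, yielding total $\le c_{X_1}+c_U\le 1+1=2$. In the complementary subcase every critical edge is witnessed by $U$, giving $|C|\le c_U\le 2$.
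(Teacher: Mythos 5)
Your proposal is correct in outline, and it takes a genuinely different route from the paper's. The paper argues locally: it supposes a single edge $x_0y_0\in M$ is critical (i.e.\ $H-x_0y_0$ has no 1-factor), uses the absence of $M$-alternating cycles through $x_0y_0$ to force non-adjacencies around that edge, and then $\delta(H)\ge 3$ plus Pigeonhole produce a $K_{3,3}$ on three of the remaining matched pairs and two swappable pairs among the last four, exhibiting at least six non-critical edges directly. You instead argue globally: critical edges are exactly those $M$-edges admitting a Hall witness (a tight set whose $B$-endpoint sees a unique neighbor in it), each tight set of size $4$ or $5$ witnesses at most $1$ or $2$ critical edges by your degree double count, and the lattice structure of tight sets (closure under unions and nonempty intersections, sizes confined to $\{3,4,5\}$ by $\delta\ge3$) confines all critical edges to at most two disjoint maximal tight sets, giving at most two critical edges in total. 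Your route yields more structural information about where critical edges can sit and adapts readily to other $(s,t)$-parameters; the paper's is more elementary and self-contained, in the style of its other matching lemmas.

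A few steps need repair or expansion in a full write-up, though none is fatal. First, tight sets of size $6$ or $7$ are excluded because vertices of $B\setminus N(X)$ (not of $A\setminus X$) would have too few available neighbors. Second, disjointness of the maximal tight proper subsets needs the extra remark that if two of them had union $A$, their intersection would be a nonempty tight set of size at most $2$, impossible since $\delta\ge3$. Third, the $(3,5)$ subcase of $|U|=8$ does not follow ``directly'' from $c_X\le 2$: critical edges in the size-$5$ part may be witnessed only by a size-$4$ tight subset of it, so that part needs exactly your $|U|=5$ analysis (which transfers verbatim). Finally, in the $|U|=5$ analysis you should say explicitly that $M$ matches every tight set onto its neighborhood; hence $u=U\setminus X_1$ is $M$-matched to the unique $v\in N(U)\setminus N(X_1)$, so $uv$ is a critical edge witnessed by $U$. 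With that in place your asserted ``short structural analysis'' does work: if some critical edge witnessed by $X_1$ is not witnessed by $U$, then $c_{X_1}\le 1$ forces $uv$ to be the only $U$-witnessed critical edge, and any second size-$4$ tight subset of $U$ would force a second $U$-witnessed critical edge, a contradiction; this gives $c_U=1$, uniqueness of $X_1$, and the bound $|C|\le 2$ as you claim.
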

\begin{proof}

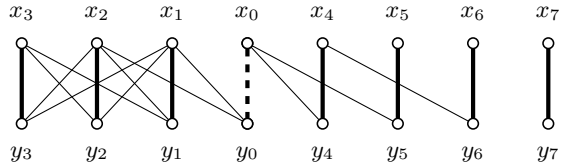
\begin{figure}[!b]
    \centering
\begin{tikzpicture}[yscale=1]
    \def\off{.4cm}
\tikzset{every node/.style=uStyle}

    \foreach \i/\x in {0/0, 1/-1, 2/-2, 3/-3, 4/1, 5/2, 6/3, 7/4}
    {
        \draw[ultra thick] (\x,1) node (x\i) {} -- (\x,-.07) node (y\i) {};
        \draw (x\i) ++ (0,\off) node[lStyle] {\footnotesize{$x_{\i}$}};
        \draw (y\i) ++ (0,-\off) node[lStyle] {\footnotesize{$y_{\i}$}};
    }
    \foreach \i/\j in {1/0, 1/1, 1/2, 1/3, 2/0, 2/1, 2/2, 2/3, 3/1, 3/2, 3/3, 0/4, 0/5, 4/6}
        \draw (x\i) -- (y\j);
    \draw[line width = .7mm, white] (x0) -- (y0);
    \draw (x0) node {} (y0) node {};
    \draw[ultra thick, dashed] (x0) -- (y0);

\end{tikzpicture}
    \caption{A subset of the edges present in $H-x_0y_0$; here the 1-factor $M$ is drawn in bold.\label{switcher-simple-fig}}
\end{figure}

    Fix $H$ and $M$ as in the lemma.  Suppose there exists $e\in M$ such that $H-e$ has no 1-factor, and denote $e$ by $x_0y_0$,
    where $x_0\in A$ and $y_0\in B$.  Let $x_1,x_2$ be two other neighbors of $y_0$, and denote the edges of $M$ incident to $x_1$
    and $x_2$ by $x_1y_1$ and $x_2y_2$; see Figure~\ref{switcher-simple-fig}.  
    Note that $y_1x_0,y_2x_0\notin E(H)$; otherwise $M\setminus\{x_0y_0,x_iy_i\}\cup\{x_0y_i,
    x_iy_0\}$ is a 1-factor in $H-e$, a contradiction.  So there exists $x_3\in N(y_1)\setminus\{x_0,x_1,x_2\}$.  Let $x_3y_3$
    denote the edge of $M_3$ incident to $x_3$.  
    Note that $y_3x_0 \not \in E(H)$, as otherwise $M \setminus 
    \{x_0y_0, x_1y_1, x_3y_3\} \cup \{x_0y_3, x_1y_0, x_3y_1\}$ is a 1-factor in $H-e$, a contradiction. 
    Repeating the same argument from $x_0$, we find vertices $y_4, y_5\in N(x_0)$ and
    edges $x_4y_4$, $x_5y_5$, $x_6y_6$, all in $M$, with $y_6\in N(x_4)\setminus\{y_0,y_4,y_5\}$.  Note that these edges are 
    distinct from $x_iy_i$ for all $i\in\{1,2,3\}$; otherwise $H$ has an $M$-alternating cycle $C$, similar to above, and the symmetric difference
    $M\triangle C$ is a 1-factor in $H-e$.

    Denote the final edge of $M$ by $x_7y_7$.  If there exists $i\in\{0,1,2,3\}$ and $j\in\{0,4,5,6\}$ with $\{y_ix_7,x_jy_7\} \subset E(H)$, then
    again $H$ contains an $M$-alternating cycle $C$ and we are done, since $M\triangle C$ is a 1-factor in $H-e$, which contradicts
    our choice of $e$.
    So assume by symmetry that no such $y_ix_7$ exists.  Similarly, we can assume there exists no edge $y_ix_j$ for all 
    $i\in\{0,1,2,3\}$ and $j\in\{0,4,5,6\}$.  So, since $\delta(H)\ge 3$, we have $x_iy_j\in E(H)$ for all $i,j\in\{1,2,3\}$.  
    Thus, we can include $x_iy_i$ in $M'$ for all $i\in\{1,2,3\}$.  We must prove the analogous statement for at least 3 values of
    $i$ in $\{4,5,6,7\}$. 
    Since $\delta(H)\ge 3$, each $x_i$ with $i\in\{4,5,6,7\}$ has at least two neighbors in $\{y_4,y_5,y_6,y_7\}\setminus\{y_i\}$.
    So, by Pigeonhole (8 edges into 6 pairs), there exist at least two pairs $i',i''\in\{4,5,6,7\}$ such that $x_{i'}y_{i''},x_{i''}y_{i'}\in E(H)$.
    Now we are done, since for each $i$ in the union of these pairs, we can include $x_iy_i\in M'$.
\end{proof}

\begin{lem}
    \label{switcher-double-lem}
    Fix an integer $k\ge 4$.
    Let $H$ be a $(2k,k-1)$-bigraph such that there exist $X\subseteq A$ and $\tE\subset E(H)$ with $|\tE|\le 1$ such
    that $|X|-|N_{H-\tE}(X)|=2$.  Let $H'$ be a $(2k,k-1)$-bigraph, where $H'$ is formed from $H$ by adding two matchings and 
    removing two matchings.  If $H'$ contains a matching of size 2 from $X$ to $B\setminus N_{H-\tE}(X)$, then $H'$ has a 1-factor.  
\end{lem}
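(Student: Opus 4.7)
The plan is to apply Hall's theorem to $H'$: it suffices to show $|N_{H'}(X')|\ge|X'|$ for every $X'\subseteq A$. Since $H'$ is itself a $(2k,k-1)$-bigraph, Proposition~\ref{easy-prop} restricts any potential obstruction $X'$ to $k\le|X'|\le k+1$, so only sets of these sizes need be checked.

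Set $Y:=N_{H-\tE}(X)$ and $Z:=B\setminus Y$, so $|Y|=|X|-2$ and $|Z|=2k-|X|+2$. Since $|N_H(X)|\le|N_{H-\tE}(X)|+|\tE|\le|X|-1$, the set $X$ is already an obstruction in $H$, so Proposition~\ref{easy-prop} gives $|X|\in\{k,k+1\}$. A quick degree count rules out $|X|=k$: each $x\in X$ has $N_{H-\tE}(x)\subseteq Y$, hence $d_{H-\tE}(x)\le|Y|=k-2$, whereas $\delta(H)\ge k-1$ and $|\tE|\le 1$ force $d_{H-\tE}(x)\ge k-1$ for all but (at most) one $x\in X$. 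So $|X|=k+1$, $|Y|=k-1$, $|Z|=k+1$, and $|A\setminus X|=k-1$. The hypothesized matching $J=\{a_1b_1,a_2b_2\}\subseteq H'$ has $a_1,a_2\in X$ and $b_1,b_2\in Z$, so $|N_{H'}(X)\cap Z|\ge 2$.

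The central estimate is an edge count: each $x\in X$ has $d_{H-\tE}(x)\ge k-2$ with all $H-\tE$-neighbors in $Y$, so $|E_{H-\tE}(X,Y)|\ge(k-2)+k(k-1)=k^2-2$. If $m$ vertices $y\in Y$ satisfy $|N_H(y)\cap X|\le 2$, then $|E_H(X,Y)|\le 2m+(k+1)(k-1-m)=k^2-1-m(k-1)$, which together with the lower bound $k^2-2$ yields $m(k-1)\le 1$ and so $m=0$ for $k\ge 3$. Thus every $y\in Y$ has at least three edges to $X$ in $H$, more than the two removed matchings of $H'\setminus H$ can erase. Hence no $y\in Y$ is detached from $X$ in $H'$, giving $|N_{H'}(X)\cap Y|=|Y|=k-1$; combined with the $\ge 2$ neighbors in $Z$ from $J$, this yields $|N_{H'}(X)|\ge(k-1)+2=k+1=|X|$ in the key case $X'=X$.

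For the remaining cases $X'$ of size $k$ or $k+1$, I expect analogous arguments: when $X'=X\setminus\{x_0\}$, a refined edge count on $X'$ gives $m(k-2)\le 1$, so $m=0$ for $k\ge 4$, meaning again no $y\in Y$ drops out of $N_{H'}(X')$, and at least one edge of $J$ survives into $N_{H'}(X')\cap Z$; when $X'$ contains a vertex outside $X$, the bound $\delta(H')\ge k-1$ supplies enough neighbors in $B$ by inclusion-exclusion (and $X'\subseteq A\setminus X$ cannot occur since $|A\setminus X|=k-1<|X'|$). The main obstacle is the tight case $X'=X$ with $|X|=k+1$ handled above; once the $m=0$ count is established, the remaining cases follow by the same counting template, with a symmetric swap argument on the $B$-side of $H'$ to close out any residual configurations.
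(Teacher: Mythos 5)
Your reduction to Hall's condition with $|X'|\in\{k,k+1\}$, your argument that $|X|=k+1$, and your edge-counting treatment of the cases $X'\subseteq X$ (both $X'=X$ and $X'=X\setminus\{x_0\}$) are correct and essentially parallel the paper's proof, which reaches the same conclusion more directly by noting that every vertex of $X$ has all of its $(H-\tE)$-neighbours in $N_{H-\tE}(X)$, a set of size $k-1$, so the bipartite graph between $X$ and $N_{H-\tE}(X)$ is $K_{k+1,k-1}$ minus at most one edge.

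The genuine gap is in the cases where $X'$ meets $A\setminus X$, which you dismiss with ``$\delta(H')\ge k-1$ supplies enough neighbors in $B$ by inclusion-exclusion.'' That justification fails: since $|N_{H-\tE}(X)|=k-1$, a vertex $a\in A\setminus X$ could, as far as the minimum-degree condition goes, have \emph{all} of its neighbours inside $N_{H-\tE}(X)$ and contribute nothing beyond what $X'\cap X$ already gives. And the contribution of $X'\cap X$ alone can fall short: for $k=4$ and $|X'|=k$ with $|X'\cap X|=k-1$, both edges of the guaranteed size-2 matching may be incident to the two vertices of $X\setminus X'$, and your counting template only yields $m(k-3)\le 1$, so one vertex of $N_{H-\tE}(X)$ may have just two neighbours in $X'\cap X$ and lose both to the two removed matchings, leaving only $k-2<|X'|$ guaranteed neighbours. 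What is missing is the dual structural fact that the paper establishes and uses: every vertex of $B\setminus N_{H-\tE}(X)$ has all of its $(H-\tE)$-neighbours in $A\setminus X$, and since $|A\setminus X|=k-1$ while all but at most one such vertex has $(H-\tE)$-degree at least $k-1$, the bipartite graph between $A\setminus X$ and $B\setminus N_{H-\tE}(X)$ is $K_{k-1,k+1}$ minus at most one edge. This is what guarantees, e.g., that a single vertex of $X'\setminus X$ retains at least $|B\setminus N_{H-\tE}(X)|-3$ neighbours there in $H'$, which (together with $k\ge4$) is exactly what the case analysis on $|X'\setminus X|$ needs. Your closing remark about a ``symmetric swap on the $B$-side'' does not supply this; you need to prove the near-completeness between $A\setminus X$ and $B\setminus N_{H-\tE}(X)$ (by the same degree/edge count you used on the other side) and then carry out the mixed cases explicitly.
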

    Our main interest in this lemma is when $k=4$, in which case $X$ is an obstruction having type 1 or type 2.
    However, the proof of this more general form is no harder, so we include it.
\begin{proof}
    By Hall's Theorem, it suffices to show $|N_{H'}(X')| \ge |X'|$ for all $X'\subseteq A$.  By Proposition~\ref{easy-prop},
    since $H'$ is a $(2k,k-1)$-bigraph, this is true whenever $|X'|\notin\{k,k+1\}$.  
    If $|X|=k$, then $X$ contains some vertex $x$ not incident with any edge in $\tE$.
    Since $\delta(H)\ge k-1$, we have $|N_{H-\tE}(X)|\ge d_H(x)\ge k-1$; thus,
    $|X|-|N_{H-\tE}(X)|\le k-(k-1)\le 1$, a contradiction.
    Hence, we must have $|X|=k+1$ and $|N_{H-\tE}(X)|=k-1$.  
    Now $H\supseteq (K_{k+1,k-1}-e)+(K_{k-1,k+1}-e)$ with at most one edge between the subsets of size $k+1$ in $A$ and $B$.
    Consider $X'\subseteq A$.  

    First suppose that $|X'|=k$.  
    If $|X'\setminus X|\ge 3$, then $|N_{H'}(X')|\ge |B\setminus N_{H-\tE}(X)|-1\ge k=|X'|$. 
    (To see that no two vertices in $B\setminus N_{H-\tE}(X)$ can be excluded from $N_{H'}(X')$, note that the 5 edges of $K_{3,2}-e$ cannot be covered by the 2 matchings removed from $H$ when forming $H'$.)  
    If $|X'\setminus X|=2$, then $|X'\cap X|\ge 2$.  So $|N_{H'}(X')|\ge |N_{H'}(X'\setminus X)\cap (B\setminus N_{H-\tE}(X))|+
    |N_{H'}(X'\cap X)\cap N_{H-\tE}(X))| \ge (|B\setminus N_{H-\tE}(X)|-2)+(|N_{H-\tE}(X)|-2) \ge k-1+k-3= 2k-4 \ge k$, since $k\ge 4$. 
    If $|X'\setminus X|=1$, then $|X'\cap X|\ge 3$.  So $|N_{H'}(X')|\ge |N_{H'}(X'\setminus X)\cap (B\setminus N_{H-\tE}(X))|+
    |N_{H'}(X'\cap X)\cap N_{H-\tE}(X))| \ge (|B\setminus N_{H-\tE}(X)|-3)+(|N_{H-\tE}(X)|-1) \ge k-2+k-2= 2k-4 \ge k$, since $k\ge 4$. 
    If $|X'\setminus X|=0$, then $|X'\cap X| = k$.  So $N_{H-\tE}(X)\subseteq N_{H'}(X')$, but also $H'$ has a matching of 
    size 2 from $X$ to $B\setminus N_{H-\tE}(X)$, and at least one of its edges is incident to a vertex of $X'\cap X$.
    Thus, $|N_{H'}(X')| \ge (k-1)+1 = k$.

    Suppose instead that $|X'|=k+1$.  
    If $|X'\setminus X|\ge 4$, then $|N_{H'}(X')|\ge |B\setminus N_{H-\tE}(X)|\ge k+1=|X'|$. 
    If $|X'\setminus X|=3$, then $|X'\cap X|\ge 2$.  So $|N_{H'}(X')|\ge |N_{H'}(X'\setminus X)\cap (B\setminus N_{H-\tE}(X))|+
    |N_{H'}(X'\cap X)\cap N_{H-\tE}(X))| \ge (|B\setminus N_{H-\tE}(X)|-1)+(|N_{H-\tE}(X)|-2) \ge k+k-3= 2k-3 \ge k+1$, 
    since $k\ge 4$. 
    If $|X'\setminus X|=2$, then $|X'\cap X|\ge 3$.  So $|N_{H'}(X')|\ge |N_{H'}(X'\setminus X)\cap (B\setminus N_{H-\tE}(X))|+
    |N_{H'}(X'\cap X)\cap N_{H-\tE}(X))| \ge (|B\setminus N_{H-\tE}(X)|-2)+(|N_{H-\tE}(X)|-1) \ge k-1+k-2= 2k-3 \ge k+1$, 
    since $k\ge 4$. 
    If $|X'\setminus X|=0$, then $|X'\cap X| = k+1$, i.e. $X'=X$.  So $N_{H-\tE}(X)\subseteq N_{H'}(X')$, but also $H'$ 
    has a matching of size 2 from $X'$ to $B\setminus N_{H-\tE}(X)$.  Thus $|N_{H'}(X')| \ge (k-1)+2 = k+1$.
\end{proof}

\begin{lem}
    Let $H$ be an $(8,4)$-bigraph with a 10-cycle $C$ and also with a matching $M$ of size 5 (necessarily sharing vertices with $C$).  
    There exists a subgraph $J$ of $H$ that contains a $2P_3$ from $C$ and a $2K_2$ from $M$ such that $H-E(J)$ has a 1-factor.
    \label{key1factor-lem}
\end{lem}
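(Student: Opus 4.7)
The plan is to construct $J$ explicitly as the union of a carefully chosen $2P_3$ from $C$ and $2K_2$ from $M$, and then to deduce that $H - E(J)$ has a 1-factor by applying Corollary~\ref{matching-inc-cor}.

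First, since $C$ is a bipartite 10-cycle, any $P_3$ in $C$ has its middle in one part and its endpoints in the other; hence in any $2P_3$ of $C$, the two middles lie in different parts. Call them $a^* \in A$ and $b^* \in B$. Removing the four edges of the $2P_3$ decreases the degrees of $a^*$ and $b^*$ by $2$ and the degrees of the four $P_3$-endpoints by $1$. Since each of $a^*, b^*$ lies on at most one edge of the matching $M$, at least three edges of $M$ miss both $a^*$ and $b^*$; I would pick two of these for the $2K_2$. With this choice, the maximum degree loss at any vertex of $H - E(J)$ is $2$, and in each part at most three vertices have loss $\geq 2$ (namely $a^*$ or $b^*$, plus at most two $P_3$-endpoints that happen to coincide with $K_2$-endpoints). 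In particular, $\delta(H - E(J)) \geq 2$.

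Next, I would try to verify the hypotheses of Corollary~\ref{matching-inc-cor}(1) in each part of $H - E(J)$, namely that the sorted degrees dominate $(1,2,3,3,4,4,4,4)$. In the favorable case (e.g., when the $K_2$-endpoints in $A$ coincide with the $P_3$-endpoints in $A$, and similarly in $B$) only three vertices per part have positive loss, and the condition follows easily. When the condition holds and no exceptional $4$-set exists, Corollary~\ref{matching-inc-cor}(1) immediately yields the desired 1-factor in $H - E(J)$.

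The main obstacle is handling the exceptional case: Corollary~\ref{matching-inc-cor}(1) may produce an obstruction $X \subseteq A$ of size $4$ with $|N_{H-E(J)}(X)| = 3$ (or symmetrically in $B$). In that case, the obstruction forces restrictive structure relating $C$, $M$, and $X$ inside $H$. I would exploit the flexibility in choosing the $2P_3$ (five essentially distinct positions in $C$ once one $P_3$ is fixed) and the $2K_2$ (any two of the five edges of $M$) to re-select $J$ so that the bad set $X$ is no longer an obstruction. Carefully tracking, for each potential obstruction, which specific $P_3$-positions and $K_2$-choices would eliminate it, and combining with the fact that $H$ has at least $32$ edges while $|E(J)| \leq 6$, should reveal that at least one choice of $J$ works. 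The core difficulty lies in organizing this case analysis cleanly, especially when $E(C) \cap E(M)$ is large and $C$-edges and $M$-edges are intertwined; the rigidity coming from $\delta(H) \geq 4$ is what ultimately rules out the persistence of a bad $4$-set through all possible choices of $J$.
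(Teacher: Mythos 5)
Your high-level strategy is the same as the paper's (pick $J$ explicitly, get the 1-factor of $H-E(J)$ from Corollary~\ref{matching-inc-cor}, re-choose $J$ when an obstruction appears), but the concrete choice you make does not work, and the step you defer is the entire content of the lemma. The quantitative problem: with your rule of taking two $M$-edges that avoid both middles $a^*,b^*$, each part generically contains \emph{five} vertices whose degree drops --- the middle (down to $\ge 2$) plus two $P_3$-endpoints and two $M$-endpoints (down to $\ge 3$) --- so the sorted degree sequence of a part of $H-E(J)$ is only guaranteed to be $(2,3,3,3,3,4,4,4)$, which violates the hypothesis $d(a_5)\ge 4$ of Corollary~\ref{matching-inc-cor}(1). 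Your ``favorable case,'' where both $K_2$-endpoints in a part coincide with the $P_3$-endpoints there, is in fact worse: those two endpoints and the middle may all drop to degree $2$, giving $(2,2,2,4,\dots)$, which violates $d(a_3)\ge 3$; so the condition does not ``follow easily'' --- it can fail outright, and Hall's condition itself can fail (the three degree-2 vertices could share the same two remaining neighbors). To keep only four damaged vertices per part with profile at least $(1,2,3,3,4,\dots)$, the chosen $M$-edges must be made to \emph{overlap} the $2P_3$ --- e.g.\ an $M$-edge joining the two middles (the paper's Case 1), or $M$-edges attached to $P_3$-endpoints, or $P_3$'s chosen to contain $M$-edges of $C$ --- which is the opposite of your rule of steering the $M$-edges away from the paths. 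This is exactly why the paper's proof cases on how $M$ sits relative to $C$ (an $M$-edge of length $3$ or $5$ along $C$, $M$-edges lying on $C$, or $M$-edges with one endpoint off $C$) and tailors the position of the $2P_3$ to the $M$-edges, with several fallback choices of $J$ and ad hoc arguments, using $\delta(H)\ge 4$, to kill the exceptional $4$-set of Corollary~\ref{matching-inc-cor}(1). (A smaller slip: it is not true that in every $2P_3$ of a $10$-cycle the two middles lie in opposite parts --- two disjoint $P_3$'s can have both middles in the same part; this is harmless only because you may \emph{choose} them in opposite parts.)

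Beyond that, the closing paragraph (``carefully tracking \dots should reveal that at least one choice of $J$ works'') is not an argument but a statement of the problem: exhibiting, for every relative position of $M$ and $C$, a choice of $J$ that both satisfies the degree hypotheses and admits no exceptional obstruction is precisely the case analysis that occupies the paper's proof. The edge-count observation ($|E(H)|\ge 32$ versus $|E(J)|\le 6$) plays no role in that analysis and does not substitute for it. So as it stands the proposal has a genuine gap: the initial construction fails the corollary's hypotheses, and the repair mechanism is left entirely unspecified.
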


\begin{proof}
    Our general approach will be to guarantee a 1-factor in $H-E(J)$ by Corollary~\ref{matching-inc-cor}.  
    Throughout, we denote $H-E(J)$ by $H'$.
    We want to ensure that $|N_{H'}(\{a_1,\ldots,a_4\})|\ge 4$ and $|N_{H'}(\{b_1,\ldots,b_4\})|\ge 4$. So we will often consider 
    a few options for $J$ and choose an option such that both inequalities hold.
    Throughout, we denote $C$ by $x_1y_1\cdots x_5y_5$.  Note that by Pigeonhole, $M$ has at least two edges with 
    endpoints in $A\cap V(C)$, and at least two edges (possibly the same two edges, but not necessarily) with endpoints 
    in $B\cap V(C)$.  For each edge $e$ of $M$ with both endpoints on $C$, the \emph{length} of $e$, denoted $\ell(e)$, 
    is the distance between its endpoints measured along $C$.  So if an edge's length is defined, then it is 1, 3, or 5.
    (We only use edge length to motivate the distinction between Cases 1 and 2.)

    If $M$ contains at least two edges of $C$, i.e. two edges $e$ with $\ell(e)=1$, then we simply \emph{form $J$ by choosing} our $2P_3$ to 
    contain both of these edges, with the endpoints of the two copies of $P_3$ in opposite parts. By Corollary~\ref{matching-inc-cor}(2), $H-E(J)$ has a 1-factor. We split into cases for the remainder of the analysis: if $M$ contains an
    edge $e$ with length equal to 3 or 5, then we are in Case 1.  Otherwise, at most one edge of $M$ has length well-defined.
    So there exist edges ${a_1b_1}$ and ${a_2b_2}$ such that ${a_1,b_2\in V(C)}$ but 
    ${b_1,a_2\notin V(C)}$, where ${a_1,a_2\in A}$ and ${b_1,b_2\in B}$.  Thus, we are in Case 2.

    \textbf{Case 1: $\bm{M}$ contains edge $\bm{y_1x_i}$ for some $\bm{i\in\{3,4\}}$.} Let $E(J):=\{x_1y_1,y_1x_2,y_1x_i,x_iy_{i-1},$
    $x_iy_i,ab\}$, where we will carefully choose, as we explain shortly, $ab\in M-y_1x_i$, with $a\in A$ and $b\in B$; possibly here $\{a,b\}\cap V(C)\ne \emptyset$.  
    We give details here only in the case where 
    no edge of $M$ lies on $C$ and our choices of $a$ and $b$ satisfy
    $a \not \in \{x_1,x_2\}$ and $b \not \in \{y_{i-1}, y_i\}$; 
    the other cases are similar but easier.
    The left of Figure~\ref{key1factor-fig1} shows the case when $i=3$. 
    We have $d_{H'}(y_1)\ge 1$, 
    $d_{H'}(y_{i-1})\ge 3$, $d_{H'}(y_i)\ge 3$, $d_{H'}(b)\ge 3$, and $d_{H'}(x_1)\ge 3$, $d_{H'}(x_2)\ge 3$, 
    $d_{H'}(x_i)\ge 1$, $d_{H'}(a)\ge 3$.  So $H'$ has a 1-factor by Corollary~\ref{matching-inc-cor} as long as both 
    $|N_{H'}(\{y_1,y_{i-1},y_i,b\})|\ge 4$ and $|N_{H'}(\{x_1,x_2,x_i,a\})|\ge 4$.  
    First suppose that $d_H(y_5)=4$ and $d_H(x_{i+1})=4$.  If we have
    $|N_{H'}(\{y_1,y_{i-1},y_i,b\})|= 3$, then $y_{i-1}x_{i+1}\in E(H)$.   But now $x_{i+1}$ has exactly one neighbor other than 
    $y_1, y_{i-1},y_i$.  So $N_{H'}(b)\ne N_{H'}(y_i)$ for all but at most one choice of $ab \in M-y_1x_i$.  Similarly, if
    $|N_{H'}(\{x_1,x_2,x_i,a\})|= 3$, then $x_2y_5\in E(H)$.  But now $y_5$ has exactly one neighbor other than $x_1,x_2,x_5$.
    So $N_{H'}(a)\ne N_{H'}(x_1)$ for all but at most one choice of $ab \in M-y_1x_i$.  We have 4 choices for $ab$ and at most 
    two are forbidden, so some choice of $ab$ works.

\begin{figure}[!h]
    \centering
\begin{tikzpicture}[xscale = .75, scale=.8, yscale=-1]
    \def\off{.4cm}
\tikzset{every node/.style=uStyle}
\tikzstyle{gStyle}=[shape = circle, minimum size = 4pt, inner sep = 1pt,
outer sep = 0pt, fill=gray!50!white, semithick, draw]

\begin{scope}[xshift = -3.15in]
    \foreach \i/\x in {5/1, 1/2, 2/3, 3/4, 4/5}
    {
        \draw[thick] (\x,0) node (x\i) {} (\x+.5,1) node (y\i) {};
        \draw (x\i) ++ (0,-\off) node[lStyle] {\footnotesize{$x_{\i}$}};
        \draw (y\i) ++ (0,\off) node[lStyle] {\footnotesize{$y_{\i}$}};
    }

    \draw[ultra thick] (6,0) node (a) {} -- (6.5,1) node (b) {};
    \draw (a) ++ (0,-\off) node[lStyle] {\footnotesize{$a$}};
    \draw (b) ++ (0,\off) node[lStyle] {\footnotesize{$b$}};

    \draw (x5) -- (y5) -- (x1) -- (y1) -- (x2) -- (y2) -- (x3) -- (y3) -- (x4) -- (y4) -- (x5);
    \draw[ultra thick] (x1) -- (y1) -- (x2) (y1) -- (x3) (y2) -- (x3) -- (y3);
\end{scope}

\begin{scope}[xshift = 0in]
    \foreach \i/\x in {5/1, 1/2, 2/3, 3/4, 4/5}
    {
        \draw[thick] (\x,0) node (x\i) {} (\x+.5,1) node (y\i) {};
        \draw (x\i) ++ (0,-\off) node[lStyle] {\footnotesize{$x_{\i}$}};
        \draw (y\i) ++ (0,\off) node[lStyle] {\footnotesize{$y_{\i}$}};
    }

    \draw[ultra thick] (6,0) node (a) {} -- (6.5,1) node (b) {};
    \draw (a) ++ (0,-\off) node[lStyle] {\footnotesize{$a$}};
    \draw (b) ++ (0,\off) node[lStyle] {\footnotesize{$b$}};

    \draw (x5) -- (y5) -- (x1) -- (y1) -- (x2) -- (y2) -- (x3) -- (y3) -- (x4) -- (y4) -- (x5);
    \draw[ultra thick] (y5) -- (x1) -- (y1) -- (x3) (y2) -- (x3) -- (y3);

    \draw (y5) node[gStyle] {};
\end{scope}

\begin{scope}[xshift = 3.15in]
    \foreach \i/\x in {5/1, 1/2, 2/3, 3/4, 4/5}
    {
        \draw[thick] (\x,0) node (x\i) {} (\x+.5,1) node (y\i) {};
        \draw (x\i) ++ (0,-\off) node[lStyle] {\footnotesize{$x_{\i}$}};
        \draw (y\i) ++ (0,\off) node[lStyle] {\footnotesize{$y_{\i}$}};
    }

    \draw[ultra thick] (6,0) node (a) {} -- (6.5,1) node (b) {};
    \draw (a) ++ (0,-\off) node[lStyle] {\footnotesize{$a$}};
    \draw (b) ++ (0,\off) node[lStyle] {\footnotesize{$b$}};

    \draw (x5) -- (y5) -- (x1) -- (y1) -- (x2) -- (y2) -- (x3) -- (y3) -- (x4) -- (y4) -- (x5);
    \draw[ultra thick] (y5) -- (x1) -- (y1) -- (x3) (x3) -- (y3) -- (x4);

    \draw (y5) node[gStyle] {} (x4) node[gStyle] {};
\end{scope}

\end{tikzpicture}
    \caption{Possible choices of $J$ in Case 1; vertices with degree known to be at least 5 are
    gray.\label{key1factor-fig1}}
\end{figure}
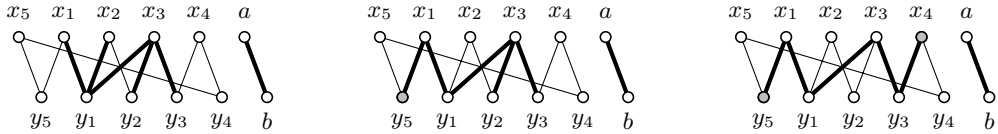

    Now assume (by symmetry) $d_H(y_5)\ge 5$.  If $d_H(x_{i+1})=4$, then let $E(J):=\{y_5x_1,x_1y_1,y_1x_i,$
    $x_iy_{i-1},x_iy_i,ab\}$; see the center of Figure~\ref{key1factor-fig1}.  
    Again, at most one choice of $ab$ is bad, but we have at least 4 such choices.  
    Finally, assume $d_H(x_{i+1})\ge 5$; see the right of Figure~\ref{key1factor-fig1}. Now let 
    $E(H):=\{y_5x_1,x_1y_1,y_1x_i,x_iy_i,y_ix_{i+1},ab\}$, for an arbitrary edge $ab\in M-y_1x_i$.  
    Now at most 3 vertices in each part of $H'$ have degree at most 3.  So $H'$ has a 1-factor.
    This completes Case~1.

    \textbf{Case 2: $\bm{M}$ contains edges $\bm{a_1b_1}$ and $\bm{a_2b_2}$ such that $\bm{a_1,b_2\in V(C)}$ but 
    $\bm{b_1,a_2\notin V(C)}$, where $\bm{a_1,a_2\in A}$ and $\bm{b_1,b_2\in B}$.}
    By symmetry, we assume $a_1=x_1$ and $b_2\in\{y_1,y_2,y_3\}$.

    First suppose that $b_2=y_1$.  
    Let $E(J):=\{x_5y_5,y_5x_1,x_1b_1,a_2y_1,y_1x_2,x_2y_2\}$.
    We must verify that $|N_{H'}(\{x_1,x_2,x_5,a_2\})|\ge 4$ and that $|N_{H'}(\{y_1,y_2,y_5,b_1\})|\ge 4$.  
    The first inequality holds because $y_1\in N_{H'}(x_1)\setminus N_{H'}(a_2)$,
    and the second holds because $x_1\in N_{H'}(y_1)\setminus N_{H'}(b_1)$.

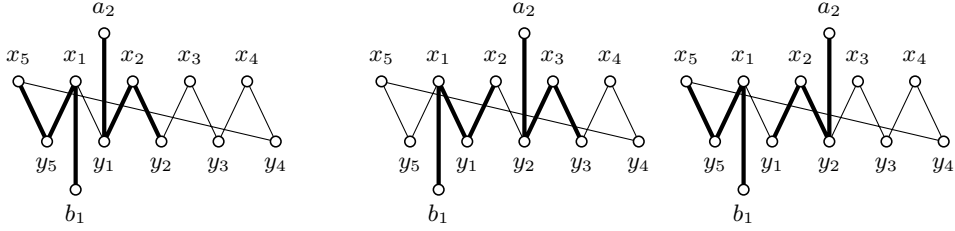
\begin{figure}[!h]
    \centering
\begin{tikzpicture}[xscale = .95, scale=.8, yscale=-1]
    \def\off{.4cm}
\tikzset{every node/.style=uStyle}
\tikzstyle{gStyle}=[shape = circle, minimum size = 4pt, inner sep = 1pt,
outer sep = 0pt, fill=gray!50!white, semithick, draw]

\begin{scope}[xshift = -2.5in]
    \foreach \i/\x in {5/1, 1/2, 2/3, 3/4, 4/5}
    {
        \draw[thick] (\x,0) node (x\i) {} (\x+.5,1) node (y\i) {};
        \draw (x\i) ++ (0,-\off) node[lStyle] {\footnotesize{$x_{\i}$}};
        \draw (y\i) ++ (0,\off) node[lStyle] {\footnotesize{$y_{\i}$}};
    }

    \draw (2,1.8) node (b){} (2.5,-.8) node (a) {};
    \draw (b) ++ (0,\off) node[lStyle] {\footnotesize{$b_1$}};
    \draw (a) ++ (0,-\off) node[lStyle] {\footnotesize{$a_2$}};
    \draw (x5) -- (y5) -- (x1) -- (y1) -- (x2) -- (y2) -- (x3) -- (y3) -- (x4) -- (y4) -- (x5);

    \draw[ultra thick] (x5) -- (y5) -- (x1) -- (b)  (a) -- (y1) -- (x2) -- (y2);
\end{scope}

\begin{scope}[xshift = 0in]
    \foreach \i/\x in {5/1, 1/2, 2/3, 3/4, 4/5}
    {
        \draw[thick] (\x,0) node (x\i) {} (\x+.5,1) node (y\i) {};
        \draw (x\i) ++ (0,-\off) node[lStyle] {\footnotesize{$x_{\i}$}};
        \draw (y\i) ++ (0,\off) node[lStyle] {\footnotesize{$y_{\i}$}};
    }

    \draw (2,1.8) node (b){} (3.5,-.8) node (a) {};
    \draw (b) ++ (0,\off) node[lStyle] {\footnotesize{$b_1$}};
    \draw (a) ++ (0,-\off) node[lStyle] {\footnotesize{$a_2$}};
    \draw (x5) -- (y5) -- (x1) -- (y1) -- (x2) -- (y2) -- (x3) -- (y3) -- (x4) -- (y4) -- (x5);

    \draw[ultra thick] (b) -- (x1) -- (y1) -- (x2) (a) -- (y2) -- (x3) -- (y3);
\end{scope}

\begin{scope}[xshift = 2.1in]
    \foreach \i/\x in {5/1, 1/2, 2/3, 3/4, 4/5}
    {
        \draw[thick] (\x,0) node (x\i) {} (\x+.5,1) node (y\i) {};
        \draw (x\i) ++ (0,-\off) node[lStyle] {\footnotesize{$x_{\i}$}};
        \draw (y\i) ++ (0,\off) node[lStyle] {\footnotesize{$y_{\i}$}};
    }

    \draw (2,1.8) node (b){} (3.5,-.8) node (a) {};
    \draw (b) ++ (0,\off) node[lStyle] {\footnotesize{$b_1$}};
    \draw (a) ++ (0,-\off) node[lStyle] {\footnotesize{$a_2$}};
    \draw (x5) -- (y5) -- (x1) -- (y1) -- (x2) -- (y2) -- (x3) -- (y3) -- (x4) -- (y4) -- (x5);

    \draw[ultra thick] (x5) -- (y5) -- (x1) -- (b)  (y1) -- (x2) -- (y2) -- (a);
\end{scope}

\end{tikzpicture}
    \caption{Left: The choice of $J$ in Case~2 when $b_2=y_1$. 
    Center and right: The choices of $J$ in Case~2 when $b_2=y_2$.
    \label{key1factor-fig2}}
\end{figure}

    Next suppose that $b_2=y_2$. Let $E(J):=\{x_1b_1,x_1y_1,y_1x_2,y_2a_2,y_2x_3,x_3y_3\}$.
    Now $d_{H'}(x_1)\ge 2$, $d_{H'}(x_2)\ge 3$, 
    $d_{H'}(a_2)\ge 3$  $d_{H'}(x_3)\ge 2$, 
    and $d_{H'}(b_1)\ge 3$, $d_{H'}(y_1)\ge 2$, $d_{H'}(y_2)\ge 2$, $d_{H'}(y_3)\ge 3$.  
    Note that $y_2\in N_{H'}(x_2)\setminus N_{H'}(a_2)$.  Thus, $|N_{H'}(\{x_1,x_2,x_3,a_2\})|\ge 4$.
    So if $J$ fails, then it is because $|N_{H'}(\{b_1,y_1,y_2,y_3\})|=3$.  This requires that $b_1x_2,b_1x_4,y_3x_2\in E(H)$.
    Now instead let $E(J):=\{x_5y_5,y_5x_1,x_1b_1,y_1x_2,x_2y_2,y_2a_2\}$; see the right of Figure~\ref{key1factor-fig2}.  Note that $x_1\in N_{H'}(y_1)\setminus N_{H'}(b_1)$,
    so $|N_{H'}(\{y_5,b_1,y_1,y_2\})|\ge 4$.  Thus, if $J$ fails, then it is because $|N_{H'}(\{x_5,x_1,x_2,a_2\})|=3$.  Since
    $b_1x_2\in E(H)$, this requires $b_1a_2\in E(H)$; similarly, it requires $y_1,y_2,y_3,y_4\in N_H(a_2)$.  But now $d_H(a_2)\ge 5$,
    a contradiction since then $|N_{H'}(a_2)| \geq 4$.

    Finally, suppose that $b_2=y_3$.  Let $E(J):=\{x_1y_5,x_1b_1,x_1y_1, y_3x_3, y_3a_2, y_3x_4\}$; see the 
    left of Figure~\ref{key1factor-fig3}.  We assume that this $J$ fails.
    By symmetry between $A$ and $B$, we assume that $|N_{H'}(\{y_5,b_1,y_1,y_3\})|=3$.  This requires that $d_H(b_1)=4$ and 
    $x_5,x_2\in N_H(b_1)$.  By symmetry, we assume that $x_4\notin N_H(b_1)$.  Now instead let $E(J):=\{x_1b_1,x_1y_1,y_1x_2, 
    y_2x_3,x_3y_3,y_3a_2\}$; see the center of Figure~\ref{key1factor-fig3}.  Since $x_4\in N_{H'}(y_3)\setminus N_{H'}(b_1)$, 
    we have $|N_{H'}(\{b_1,y_1,y_2,y_3\})|\ge 4$.
    So we assume that $|N_{H'}(\{x_1,x_2,x_3,a_2\})|=3$.  This requires that $d_H(a_2)=4$ and $y_2,y_3,y_5\in N_H(a_2)$.
    But now we modify $J$ by removing $x_1y_1,y_1x_2$ and adding $x_1y_5,y_5x_5$; see the right of Figure~\ref{key1factor-fig3}.  
    The same arguments now require that $y_4,y_1\in N_H(a_2)$.  But now $d_H(a_2)\ge 5$, a contradiction.
\end{proof}

    \begin{figure}[!h]
    \centering
\begin{tikzpicture}[xscale = .95, scale=.8, yscale=-1]
    \def\off{.4cm}
\tikzset{every node/.style=uStyle}
\tikzstyle{gStyle}=[shape = circle, minimum size = 4pt, inner sep = 1pt,
outer sep = 0pt, fill=gray!50!white, semithick, draw]

\begin{scope}[xshift = -2.5in]
    \foreach \i/\x in {5/1, 1/2, 2/3, 3/4, 4/5}
    {
        \draw[thick] (\x,0) node (x\i) {} (\x+.5,1) node (y\i) {};
        \draw (x\i) ++ (0,-\off) node[lStyle] {\footnotesize{$x_{\i}$}};
        \draw (y\i) ++ (0,\off) node[lStyle] {\footnotesize{$y_{\i}$}};
    }

    \draw (2,1.8) node (b){} (4.5,-.8) node (a) {};
    \draw (b) ++ (0,\off) node[lStyle] {\footnotesize{$b_1$}};
    \draw (a) ++ (0,-\off) node[lStyle] {\footnotesize{$a_2$}};
    \draw (x5) -- (y5) -- (x1) -- (y1) -- (x2) -- (y2) -- (x3) -- (y3) -- (x4) -- (y4) -- (x5);

    \draw[ultra thick] (y5) -- (x1) (b) -- (x1) -- (y1) (x3) -- (y3) -- (a) (y3) -- (x4);
\end{scope}

\begin{scope}[xshift = 0in]
    \foreach \i/\x in {5/1, 1/2, 2/3, 3/4, 4/5}
    {
        \draw[thick] (\x,0) node (x\i) {} (\x+.5,1) node (y\i) {};
        \draw (x\i) ++ (0,-\off) node[lStyle] {\footnotesize{$x_{\i}$}};
        \draw (y\i) ++ (0,\off) node[lStyle] {\footnotesize{$y_{\i}$}};
    }

    \draw (2,1.8) node (b){} (4.5,-.8) node (a) {};
    \draw (b) ++ (0,\off) node[lStyle] {\footnotesize{$b_1$}};
    \draw (a) ++ (0,-\off) node[lStyle] {\footnotesize{$a_2$}};
    \draw (x5) -- (y5) -- (x1) -- (y1) -- (x2) -- (y2) -- (x3) -- (y3) -- (x4) -- (y4) -- (x5);

    \draw[ultra thick] (b) -- (x1) -- (y1) -- (x2) (y2) -- (x3) -- (y3) -- (a);
\end{scope}

\begin{scope}[xshift = 2.5in]
    \foreach \i/\x in {5/1, 1/2, 2/3, 3/4, 4/5}
    {
        \draw[thick] (\x,0) node (x\i) {} (\x+.5,1) node (y\i) {};
        \draw (x\i) ++ (0,-\off) node[lStyle] {\footnotesize{$x_{\i}$}};
        \draw (y\i) ++ (0,\off) node[lStyle] {\footnotesize{$y_{\i}$}};
    }

    \draw (2,1.8) node (b){} (4.5,-.8) node (a) {};
    \draw (b) ++ (0,\off) node[lStyle] {\footnotesize{$b_1$}};
    \draw (a) ++ (0,-\off) node[lStyle] {\footnotesize{$a_2$}};
    \draw (x5) -- (y5) -- (x1) -- (y1) -- (x2) -- (y2) -- (x3) -- (y3) -- (x4) -- (y4) -- (x5);

    \draw[ultra thick] (b) -- (x1) -- (y5) -- (x5) (y2) -- (x3) -- (y3) -- (a);
\end{scope}

\end{tikzpicture}
    \caption{Three choices of $J$ in Case~2 when $b_2=y_3$. 
    \label{key1factor-fig3}}
\end{figure}
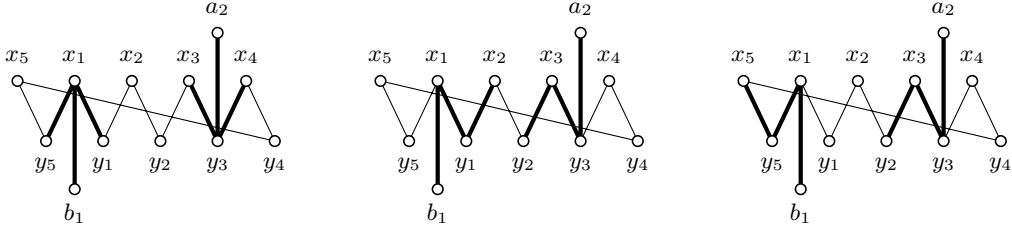

\begin{lem}
    Let $H$ be an $(8,4)$-bigraph with a matching $M$ of size 5 and a subgraph $C$ consisting of a vertex disjoint 6-cycle and 
    4-cycle (necessarily, $C$ is not vertex disjoint from $M$).
    There exists a subgraph $J$ of $H$ that contains a $2P_3$ from $C$ and a $2K_2$ from $M$ such that $H-E(J)$ has a 1-factor.
    \label{key1factorB-lem}
\end{lem}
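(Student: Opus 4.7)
The plan is to mirror Lemma~\ref{key1factor-lem}. Denote the 6-cycle by $u_1v_1u_2v_2u_3v_3u_1$ and the 4-cycle by $x_1y_1x_2y_2x_1$, with $u_i,x_i\in A$ and $v_i,y_i\in B$, and, as in that proof, define the length of an $M$-edge with both endpoints on $C$ to be its distance inside whichever cycle contains it. I will pick $J$ to be a $2P_3$ supported on $C$ together with a $2K_2$ supported on $M$, then verify via Corollary~\ref{matching-inc-cor}(2) that $H':=H-E(J)$ has a 1-factor, i.e.\ that the sorted degree sequence in each part of $H'$ dominates $(1,2,3,4,4,4,4,4)$. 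Any two distinct edges of $M$ already form a $2K_2$, so the nontrivial constraint is the choice of the $2P_3$. A new source of flexibility over Lemma~\ref{key1factor-lem} is that I may take one $P_3$ from each cycle (which is the ``natural'' choice since the two cycles are vertex-disjoint), or, if it is more convenient, take a $2P_3$ partitioning the $6$-cycle using one of its three decompositions into paths of length~$2$.

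I will split into cases according to how $M$ meets $C$. If $M$ contains at least two edges of $C$, I place each of these into its own $P_3$ (one in each cycle when possible, otherwise both in the 6-cycle via one of its $2P_3$-partitions) and use these two edges as the $2K_2$; the deletions then fall on the expected vertices and the degree budget is trivially met. If $M$ contains an edge $e$ with both endpoints on the 6-cycle at distance~$3$ (so $e\notin E(C)$), I follow Case~1 of Lemma~\ref{key1factor-lem}: take the $P_3$ on the 6-cycle through a suitable endpoint of $e$, take an arbitrary $P_3$ from the 4-cycle, and use $e$ together with any other $M$-edge as the $2K_2$. Finally, if at most one edge of $M$ has both endpoints on $C$, the pigeonhole of Case~2 of Lemma~\ref{key1factor-lem} still applies: since $|A\setminus V(C)|=|B\setminus V(C)|=3$, there exist vertex-disjoint edges $a_1b_1,a_2b_2\in M$ with $a_1,b_2\in V(C)$ but $b_1,a_2\notin V(C)$, which I use as the $2K_2$, and I choose the $2P_3$ to cover $a_1$ and $b_2$.

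In every subcase the verification reduces to checking that the (at most $6$) deleted edges of $J$ do not overly concentrate on a small vertex set, so that the degree-sequence dominance needed for Corollary~\ref{matching-inc-cor}(2) holds. When $a_1$ and $b_2$ lie in different components of $C$, it is straightforward to put one $P_3$ in each cycle, and the budget is essentially automatic. The main obstacle I anticipate is the subcase where $a_1$ and $b_2$ lie in the same component of $C$, especially both on the 4-cycle, where the $P_3$ options inside the 4-cycle are few and the middle vertex of such a $P_3$ may also be an endpoint of an $M$-edge, concentrating too much degree loss. In that situation I plan to fall back on a $2P_3$-decomposition of the 6-cycle, leaving the 4-cycle nearly untouched by $J$, and to use the backup-$J$ technique from Lemma~\ref{key1factor-lem} (swapping to a second candidate $J$ whenever the first fails, which only happens for very restrictive configurations of extra edges of $H$ outside $M\cup C$) to complete the argument.
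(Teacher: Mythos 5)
There is a genuine gap: your case split is not exhaustive, because it overlooks $M$-edges whose two endpoints both lie on $C$ but in \emph{different} components, i.e.\ edges joining the 6-cycle to the 4-cycle. Your notion of length is only defined for $M$-edges inside a single cycle, and such cross edges are neither edges of $C$ (your first case) nor distance-3 chords of the 6-cycle (your second case), yet they can falsify the hypothesis of your third case. Concretely, in your labels, suppose $M$ consists of the cross edges $u_1y_1$ and $u_2y_2$ together with three edges matching the three $A$-vertices outside $V(C)$ to the three $B$-vertices outside $V(C)$. Then no edge of $C$ lies in $M$, no $M$-edge is a chord of the 6-cycle, two $M$-edges have both endpoints on $C$, and no $M$-edge has exactly one endpoint on $C$ --- so the pigeonhole pair $a_1b_1,a_2b_2$ of your third case does not exist and none of your cases applies. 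These cross configurations are exactly what forces the paper's extra preliminary reduction (if $M$ meets both sides of the 4-cycle, put the whole 4-cycle into $J$) and its last two cases, in which the $M$-edges attached to the 6-cycle are explicitly allowed to coincide with those attached to the 4-cycle. Relatedly, even inside your first case the recipe breaks when both $M$-edges of $C$ lie in the 4-cycle: two vertex-disjoint $P_3$'s cannot sit inside a 4-cycle, and a 4-cycle edge cannot be placed in a $P_3$ of the 6-cycle; the fix (as in the paper) is again to take all four 4-cycle edges into $J$.

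A second problem is the claim that the verification reduces to the degree-sequence condition of Corollary~\ref{matching-inc-cor}(2). Once no $M$-edge lies on $E(C)$, any admissible $J$ has six edges, so each part loses total degree six, and a guarantee via part (2) alone would require those losses to fall on at most three vertices per part with profile at most $(3,2,1)$; already your second case (one $P_3$ per cycle plus the chord and one further $M$-edge) spreads the losses over at least four $B$-vertices. So in the substantive cases one must invoke Corollary~\ref{matching-inc-cor}(1) (or Lemma~\ref{matching-inc-lem}(1)) and rule out its exceptional sets, which forces specific adjacencies when a candidate $J$ fails and leads to the backup choices --- this neighborhood analysis is the actual content of the paper's proof, and your proposal defers all of it. Note also that the subcase you flag as the main obstacle (both marked vertices on the 4-cycle) is in fact one of the easy ones, killed immediately by deleting the whole 4-cycle; the genuinely hard configuration in the paper is the one your plan never reaches, namely when every $M$-edge meeting the 6-cycle meets only its $A$-side and every $M$-edge meeting the 4-cycle meets only its $B$-side (possibly via shared cross edges), which requires its own tailored choices of $J$.
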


\begin{proof}
   We denote the 6-cycle by $x_1y_1x_2y_2x_3y_3$ and the 4-cycle by $x_4y_4x_5y_5$.
    If $M$ contains an edge $e$ of the 4-cycle, then we let $E(J):=\{x_4y_4,y_4x_5,x_5y_5,y_5x_4,e'\}$, where $e'$ is an arbitrary edge
    of $M-e$.  Now $H-E(J)$ 
    contains a 1-factor by Corollary~\ref{matching-inc-cor}(2).  So assume this doesn't happen.  Similarly, if
    $M$ contains an edge $e_1$ incident to $\{x_4,x_5\}$ and an edge $e_2$ incident to $\{y_4,y_5\}$, then we let 
    $E(J):=\{x_4y_4,y_4x_5,x_5y_5,y_5x_4,e_1,e_2\}$; again, $H-E(J)$ 
    contains a 1-factor by Corollary~\ref{matching-inc-cor}(2), so we assume
    this doesn't happen.

    \textbf{Case 1: $\bm{x_1y_2\in M}$.}  Let $E(J):=\{x_1y_1,x_1y_2, x_1y_3,x_2y_2,x_3y_2,e\}$ where $e\in M-x_1y_2$.
    If this choice of $J$ fails, then we must have $y_1x_3,y_3x_2\in E(H)$.  Assume so, and now let 
    $E(J):=\{x_1y_1,x_1y_2,x_2y_1,x_3y_2,x_3y_3, e\}$, where $e\in M-x_1y_2$.  
    Note that $y_1\in N_{H'}(x_3)\setminus N_{H'}(x_2)$
    and $d_{H'}(x_2)\ge 3$.  Similarly, $x_3\in N_{H'}(y_1)\setminus N_{H'}(y_3)$ and $d_{H'}(y_3)\ge 3$.  Thus, this choice
    of $J$ succeeds.

    \textbf{Case 2: $\bm{x_1y_1\in M}$.}  Let $E(J):=\{x_1y_1,x_2y_1,x_3y_2,x_3y_3,e\}$, where $e\in M-x_1y_1$.
    If this choice of $J$ fails, then $x_1y_2,x_2y_3\in E(H)$.  And, by symmetry, also $x_3y_1\in E(H)$. 
    Now instead, let $E(J):=\{x_1y_1,x_1y_3,x_2y_1,x_2y_2,x_3y_3,e\}$
    where $e\in M-x_1y_1$.
    Since we are not in Case 1 above, $x_2y_3, x_3y_1\notin M$.
    Hence, $\{x_1,x_2,x_3\}\subsetneq N_{H'}(\{y_1,y_2,y_3\})$ and $\{y_1,y_2,y_3\}\subsetneq N_H(\{x_1,x_2,x_3\})$.
    Thus, this choice of $J$ succeeds.

    \textbf{Case 3: $\bm{x_1b_1,y_ia_2\in M}$ for some $\bm{i\in\{1,2,3\}}$.}  By symmetry, we assume that $i\in\{1,2\}$.
    Let $E(J):=\{b_1x_1,x_1y_3,y_3x_3,y_1x_2,x_2y_2,y_ia_2\}$.  Now $d_{H'}(b_1)\ge 3$ and $x_1\in N_{H'}(y_1)\setminus N_{H'}(b_1)$.
    Similarly, $d_{H'}(a_2)\ge 3$ and $y_i\in N_{H'}(\{x_1,x_3\})\setminus N_{H'}(a_2)$.  Thus, this choice of $J$ succeeds.
    \bigskip

    We assume we are not in any case above.  By Pigeonhole, 
    at least two edges of $M$ are incident to vertices $x_i$ with
    $i\in\{1,2,3,4,5\}$; similarly, at least two edges of $M$ are incident to vertices $y_j$ with $j\in \{1,2,3,4,5\}$.
    By symmetry between $A$ and $B$, we assume we are in the following case.

    \textbf{Case 4: $\bm{M}$ contains edges $\bm{x_1b_1,x_3b_3}$ and also contains edges $\bm{a_4y_4,a_5y_5}$.}
    We allow the possibility that one or both edges in the second list also appears in the first. By symmetry, we assume that 
    $a_4y_4\ne x_1b_1$ and that $a_5y_5 \ne x_3b_3$.  We may also assume that $a_4y_5,a_5y_4\notin E(H)$; otherwise, 
    we let $E(J):=\{x_4y_4,x_4y_5,x_5y_4,x_5y_5,a_4y_4,a_5y_5\}$ and are done by Lemma~\ref{matching-inc-lem}(1).

    Now let $E(J):=\{b_1x_1,x_1y_1,x_1y_3, a_4y_4,x_4y_4,x_5y_4\}$; see the left of Figure~\ref{key1factor-fig4}.  
    Note that $d_{H'}(a_4)\ge 3$ and $y_5\in N_{H'}(x_5)\setminus N_{H'}(a_4)$.  
    Thus, if $J$ fails, it is because $|N_{H'}(\{b_1,y_1,y_3,y_4\})|=3$, which implies $b_1x_2,b_1x_3,y_1x_3\in E(H)$. 
    By symmetry, (simply relabeling the vertices of the 6-cycle) we also assume that $b_3x_2\in E(H)$.   But now
    $d_H(x_2)\ge 5$. So let $E(J):=\{b_1x_1,x_1y_1,y_1x_2, a_4y_4,x_4y_4,x_5y_4\}$; see the right of Figure~\ref{key1factor-fig4}.  
    Now $d_{H'}(x_2)\ge 4$ and again $d_{H'}(a_4)\ge 3$ and $y_5\in N_{H'}(x_4)\setminus N_{H'}(a_4)$, 
    so $H-E(J)$ contains a 1-factor by Corollary~\ref{matching-inc-cor}(1).
\end{proof}

    \begin{figure}[!h]
    \centering
\begin{tikzpicture}[xscale = .9, scale=.8, yscale=-1]
    \def\off{.4cm}
\tikzset{every node/.style=uStyle}
\tikzstyle{gStyle}=[shape = circle, minimum size = 4pt, inner sep = 1pt,
outer sep = 0pt, fill=gray!50!white, semithick, draw]

\begin{scope}[xshift = -4in]
    \foreach \i/\x in {1/1, 2/2, 3/3, 4/6, 5/7}
    {
        \draw[thick] (\x,0) node (x\i) {} (\x+.5,1) node (y\i) {};
        \draw (x\i) ++ (0,-\off) node[lStyle] {\footnotesize{$x_{\i}$}};
        \draw (y\i) ++ (0,\off) node[lStyle] {\footnotesize{$y_{\i}$}};
    }

    \draw (.5,1) node (b1){} (4.5,1) node (b3) {};
    \draw (b1) ++ (0,\off) node[lStyle] {\footnotesize{$b_1$}};
    \draw (b3) ++ (0,\off) node[lStyle] {\footnotesize{$b_3$}};
    \draw (5,0) node (a4){} (8,0) node (a5) {};
    \draw (a4) ++ (0,-\off) node[lStyle] {\footnotesize{$a_4$}};
    \draw (a5) ++ (0,-\off) node[lStyle] {\footnotesize{$a_5$}};
    \draw (x1) -- (y1) -- (x2) -- (y2) -- (x3) -- (y3) -- (x1) (x4) -- (y4) -- (x5) -- (y5) -- (x4) (x3) -- (b3) (y5) -- (a5);
    \draw[dashed] (y4) -- (a5) (y5) -- (a4);

    \draw[ultra thick] (b1) -- (x1) -- (y1) (x1) -- (y3) (a4) -- (y4) -- (x4) (y4) -- (x5);
\end{scope}

\begin{scope}[xshift = 0in]
    \foreach \i/\x in {1/1, 2/2, 3/3, 4/6, 5/7}
    {
        \draw[thick] (\x,0) node (x\i) {} (\x+.5,1) node (y\i) {};
        \draw (x\i) ++ (0,-\off) node[lStyle] {\footnotesize{$x_{\i}$}};
        \draw (y\i) ++ (0,\off) node[lStyle] {\footnotesize{$y_{\i}$}};
    }

    \draw (.5,1) node (b1){} (4.5,1) node (b3) {};
    \draw (b1) ++ (0,\off) node[lStyle] {\footnotesize{$b_1$}};
    \draw (b3) ++ (0,\off) node[lStyle] {\footnotesize{$b_3$}};
    \draw (5,0) node (a4){} (8,0) node (a5) {};
    \draw (a4) ++ (0,-\off) node[lStyle] {\footnotesize{$a_4$}};
    \draw (a5) ++ (0,-\off) node[lStyle] {\footnotesize{$a_5$}};
    \draw (x1) -- (y1) -- (x2) -- (y2) -- (x3) -- (y3) -- (x1) (x4) -- (y4) -- (x5) -- (y5) -- (x4) (x3) -- (b3) (y5) -- (a5);
    \draw (x2) -- (b1) (x2) -- (y3) (x2) -- (b3);
    \draw[dashed] (y4) -- (a5) (y5) -- (a4);

    \draw[ultra thick] (b1) -- (x1) -- (y1) (y1) -- (x2) (a4) -- (y4) -- (x4) (y4) -- (x5);
\end{scope}

\end{tikzpicture}
    \caption{Two choices of $J$ in Case~4. 
    \label{key1factor-fig4}}
\end{figure}
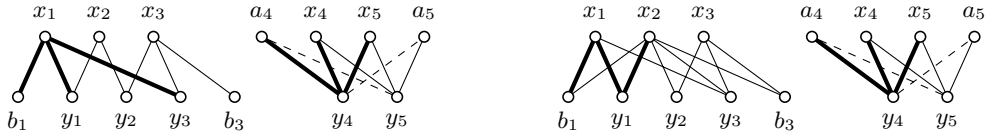


\section{Open Problems}
\label{open-sec}

We suggest the following open problems for further study.  For a graph class $\G$, 
let $\chisl(\G):=\max_{G\in \G}\chisl(G)$ and let $\chisc(\G):=\max_{G\in \G}\chisc(G)$.
Let $\P_g$ denote the class of planar graphs with girth at least $g$ and, for each positive real $a$, 
let $\G_a$ denote the class of graphs $G$ with $\mad(G)<a$.
\begin{enumerate}
    \item For each integer $g\in\{3,4\}$, determine $\chisl(\P_g)$ and $\chisc(\P_g)$.  Our main results are that $\chisc(\P_3)\le 8$
        and $\chisc(\P_4)\le 5$ and $\chisc(\P_5)\le 4$.  In~\cite[Theorem~4.1]{CCvBZ}, the authors present constructions showing 
        that $\chisl(\P_g)=\chisc(\P_g)=4$ for all $g\ge 5$.
    \item 
        For each positive real number $a$, determine $\chisl(\G_a)$ and $\chisc(\G_a)$.
        (In fact our proof that $\chisc(\P_4)\le 5$ actually proves that $\chisc(\G_4)\le 5$.)
        Conversely, given a positive integer $k$, determine the maximum values $a(k)$ and $b(k)$ such that $\chisl(\G_{a(k)})\le k$
        and $\chisc(\G_{b(k)})\le k$.
        Note that $a(k)\ge b(k)$ for all $k$, since every bad $k$-assignment gives rise to a bad $k$-cover.
        Here is a perhaps easier question: What are $\limsup_{k\to\infty}a(k)/k$ and $\limsup_{k\to\infty}b(k)/k$?
        Corollary~\ref{degen-cor} shows that $b(k)/k\ge 1/2$ for all $k$.  We can improve this somewhat as 
        follows.  

        For each integer $k\ge 3$, we have $b(2k-1)\ge k+k/(k+1)$.  Consider a graph $G$ and a $(2k-1)$-cover $(\vec{D},\sigma)$
        such that $G$ has no $(\vec{D},\sigma)$-packing, but every proper subgraph does.  By Corollary~\ref{degen-cor}, we 
        know $\delta(G)\ge k$.  We use discharging with initial charge $d(v)$ for all $v$.
        We use a single discharging rule: each $k$-vertex takes $1/(k+1)$ from each neighbor.
        Suppose $d(v)=k$.  By Lemma~\ref{k,k+1-lem}, we know $d(w)\ge k+2$ for all $w\in N(v)$.  Now $v$ finishes with charge 
        $k+k/(k+1)$.  If $d(v)=k+1$, then $v$ starts and finishes with charge $k+1$.  Finally, suppose $d(v)\ge k+2$.  Now $v$
        finishes with at least $d(v)-d(v)/(k+1) = d(v)k/(k+1)\ge (k+2)k/(k+1) = k + k/(k+1)$.  Thus, $\mad(G)\ge k+k/(k+1)$.

    \item Determine $\chisl(\B)$ and $\chisc(\B)$, where $\B$ denotes the class of all planar bipartite graphs.  
        We conjecture that $\chisl(\B)\le 4$, and also ask whether $\chisc(\B)\le 4$.  If the latter is true, then it is best
        possible, as shown by Example~\ref{ex:cycles} for (arbitrarily long) even cycles.

    \item For each surface $S$, let $\G_S$ denote the set of all graphs embeddable in $S$.  For every surface $S$, determine 
        $\chisl(\G_S)$ and $\chisc(\G_S)$.  Let $h(S)$ denote the \emph{Heawood number} of $S$, which is the order of the 
        largest complete graph
        embeddable in $S$.  It was famously proved (with the upper bound by Heawood, and the lower bound comprised of work by 
        various groups, most notably including Ringel and Youngs) that $\chi(\G_S)=\chi_{\ell}(\G_S)=h(S)$, for all surfaces except
        the plane and the Klein bottle.  In fact, it was later proved that if $G$ embeds in $S$ and $G$ has no subgraph $K_{h(S)}$,
        then $\chi_{\ell}(G)\le h(S)-1$.  In~\cite{CCvBDK}, it was proved that $\chisl(K_t)=t$ for all $t$.  And Yuster 
        observed~\cite{yuster}, based on a construction of Catlin~\cite{catlin}, that $\chisc(K_t)\ge t+1$ when $t$ is odd 
        (which is conjectured to be sharp).  Regardless of the precise value of $\chisc(K_t)$, 
        is natural to ask the following.
        For which surfaces $S$ do we have $\chisl(\G_S)=\chisl(K_{h(S)})$?  
        And for which do we have $\chisc(\G_S)=\chisc(K_{h(S)})$?  
        For which surfaces do we have $\chisl(G)\le h(S)-1$ for all $G\in \G_S$ that 
        do not contain $K_{h(S)}$ as a subgraph?  What is true in the analogous situation for $\chisc(G)$?
\end{enumerate}

\section*{Acknowledgments}
Thanks to two anonymous referees for helpful feedback.  In particular, one referee read the paper very carefully,
catching numerous inaccuracies and suggesting various ways to improve clarity.
\scriptsize{

}

\end{document}